\DeclareMathAlphabet{\mathscrbf}{OMS}{mdugm}{b}{n}
\DeclareMathAlphabet{\mathbfcal}{OMS}{cmsy}{b}{n}
\newtheorem{dummy}{anything}[section]
\newtheorem{theorem}[dummy]{Theorem}
\newtheorem{corollary}[dummy]{Corollary}
\theoremstyle{definition}
\newtheorem{definition}[dummy]{Definition}
\newtheorem{example}[dummy]{Example}
\newtheorem{remark}[dummy]{Remark}
\newtheorem{notation}[dummy]{Notation}
\newcommand{\smalltimes}[1][0.75]{%
	\mathbin{\vcenter{\hbox{\scalebox{#1}{$\times$}}}}}
\newcommand{\colim}{\operatornamewithlimits{colim}}
\newcommand{\lto}{\longrightarrow}
\newcommand{\cA}{\mathcal A}
\newcommand{\cB}{\mathcal B}
\newcommand{\cC}{\mathcal C}
\newcommand{\cD}{\mathcal D}
\newcommand{\cG}{\mathcal G}
\newcommand{\ii}{\mathcal I}
\newcommand{\cM}{\mathcal M}
\newcommand{\cT}{\mathscr{T}}
\newcommand{\TT}{\mathbb{T}}
\newcommand{\VV}{{\mathscrbf{V}}}
\newcommand{\cS}{\mathcal S}
\newcommand{\bbN}{\mathbb N}
\newcommand{\bbR}{\mathbb R}
\newcommand{\bbS}{\mathbb S}
\newcommand{\bbZ}{\mathbb Z}
\newcommand{\vunit}{\mathbbm{1}}
\newcommand{\unit}{\mathfrak{{1}}}
\newcommand{\cunit}{\mathbf{1}}
\newcommand{\bfV}{\mathbf H}
\DeclareMathOperator{\VCat}{{\mathscrbf{V}}\mathbf{Cat}}
\newcommand{\cAb}{\mathcal Ab}
\DeclareMathOperator{\Tr}{T}
\DeclareMathOperator{\ho}{Ho}
\DeclareMathOperator{\co}{co}
\DeclareMathOperator{\Stab}{Stab}
\DeclareMathOperator{\coStab}{coStab}
\DeclareMathOperator{\Fun}{Fun}
\DeclareMathOperator{\Inv}{Inv}
\DeclareMathOperator{\coInv}{coInv}
\DeclareMathOperator{\RelCat}{\mathbf{RelCat}}
\DeclareMathOperator{\Mon}{\mathbf{Mon}}
\DeclareMathOperator{\Sym}{\mathbf{Sym}}
\DeclareMathOperator{\Cat}{\mathbf{Cat}}
\DeclareMathOperator{\CAT}{\mathbf{CAT}}
\DeclareMathOperator{\MonCat}{\mathbf{MonCat}}
\DeclareMathOperator{\Top}{\mathbf{Top}}
\DeclareMathOperator{\Psd}{Psd}
\DeclareMathOperator{\stVM}{st\ii\frM}
\DeclareMathOperator{\hml}{hml}
\DeclareMathOperator{\cohml}{cohml}
\DeclareMathOperator{\HML}{HML}
\DeclareMathOperator{\COHML}{COHML}
\DeclareMathOperator{\op}{op}
\DeclareMathOperator{\Lax}{Lax}
\DeclareMathOperator{\Adj}{Adj}
\newcommand{\HH}{\mathbf{B}({\ii{\smalltimes}\ii^{\op}})} 
\newcommand{\frM}{\bm{\mathfrak{M}}}
\newcommand{\frT}{\bm{\mathfrak{T}}}
\newcommand{\SP}{\mathcal{S}p}
\newcommand{\Set}{\mathcal{S}et}
\newcommand{\id}{\mathrm{id}}
\begin{document}

\title[Actions of Monoidal Categories]{Stabilization and costabilization with respect to an action of a monoidal category}
	\thanks{Partially supported by T\"UB\.ITAK-TBAG$/117$F$085$}
	
		\author{MEHMET AK\. IF ERDAL and \" OZG\" UN \" UNL\" U }

\address[1]{%
	Deptartment of Mathematics\\
	Yeditepe University\\
 34755,	İstanbul\\
	Turkey}
\email{mehmet.erdal@yeditepe.edu.tr}	
	\address[2]{%
		Deptartment of Mathematics\\
		Bilkent University\\
	 06800,	Ankara\\
		Turkey}
\email{unluo@fen.bilkent.edu.tr}
	
	\subjclass[2010]{Primary 57Q20; Secondary 55Q45}
	
	\keywords{action, monoidal category, stabilization, (co)homology and (co)homotopy theories}
	
	\date{}

\begin{abstract} We study actions of monoidal categories on objects in a suitably enriched $2$-category, and applications in stable homotopy theory. Given a   monoidal category $\mathcal{I}$ and  an $\mathcal{I}$-object $\mathcal{A}$, the (co)stabilization of   $\mathcal{A}$ is obtained by universally forcing the $\mathcal{I}$-action to be reversible so that every object of $\mathcal{I}$ acts on $\mathcal{A}$ by auto-equivalences. We introduce a notion of $\ii$-equivariance for morphisms between $\mathcal{I}$-objects and give constructions of stabilization and costabilization in terms of weak ends and coends in an enriched $2$-category of $\mathcal{I}$-objects and $\mathcal{I}$-equivariant morphisms. We observe that the stabilization of a relative category with respect to an action coincides with the usual notion of stabilization in stable homotopy theory when the action is defined by loop space functors. We show that several examples that exist in the literature, including various categories of spectra, fit into our setting after fixing $\mathcal{A}$ and the $\mathcal{I}$-action on it. In particular, categories of sequential spectra, coordinate free spectra, genuine equivariant spectra, parameterized spectra indexed by vector bundles are obtained in terms of weak ends in the $2$-category of relative categories. On the other hand, the costabilization of a relative category with respect to an action gives a stable relative category akin to a version Spanier-Whitehead category. In particular, we establish a form of duality between constructions of stable homotopy categories by spectra and by Spanier-Whitehead like categories.\end{abstract}
\maketitle


\section{Introduction}
\label{sect:Introduction}  
A  pointed homotopy theory $\cA$ (e.g., pointed model category, pointed relative category,  pointed $(\infty,1)$-category, pointed derivator, etc.) equipped with a family $\{\Omega^\alpha:\cA\to \cA\}_\alpha$ of loop space functors  is called  stable if it is stable under these functors; i.e., $\Omega^\alpha$ is an auto-equivalence for each $\alpha$. If this is not the case, the stabilization of $\cA$ is obtained by formally inverting all these loop space functors. The standard construction is given by the category generated by spectrum objects in $\cA$. If there is only one such loop space functor to invert, a spectrum object consists of a sequence of objects $X_n$ in $\cA$ together with a sequence of isomorphisms $ X_n\to \Omega X_{n+1}$. The category of such objects, the stabilization of $\cA$, is equivalent the $(\infty,1)$-limit of the sequence $\cdots\stackrel{\Omega}\to \cA\stackrel{\Omega}\to \cA\stackrel{\Omega}\to \cA$; however, this construction is not functorial, see e.g. \cite[Sec. 1 and Sec. 7]{lurie2012higher}. In order to deal with the non-functoriality of the stabilization one uses Goodwillie calculus to get functorial approximations via Goodwillie-Taylor tower, see e.g. \cite{goodwillie3},  \cite{lurie}, \cite{pereira2013general}.

The non-functoriality of the stabilization is due to the non-equivariance of the morphisms between homotopy theories. This can be understood best in set level constructions. Let $A$ be a finite set on which $\bbN$ acts. Let $\mu_1$ denote the map $\mu_1:A\to A $ given by $\mu_1(a)= 1\cdot a$, where $\cdot $ is the $\bbN$-action. If we universally lift such a $\bbN$ action on $A$ to a $\bbZ$ action (i.e., a  set on which $\bbN$ acts by bijections) then we obtain the largest subset of $A$ on which the restriction of $\mu_1$ is a bijection, see \cite{erdalunlu}. Now, if $f:A\to B$ is a map between finite $\bbN$-sets, then $f$  does not have to induce a map on such maximal subsets unless $f$ is $\bbN$-equivariant. In other words, the association that sends a finite $\bbN$-set to its largest subset on which $\bbN$ acts by bijections does not define an endofunctor on the category of $\bbN$-sets and functions; however, it defines an endofunctor on the category of $\bbN$-sets and $\bbN$-equivariant functions.

 In this paper, we propose another approach to deal with the non-functoriality of the stabilization by considering suitable homotopy theories  equipped with actions of a fixed monoidal category and ``equivariant" functors between them with respect to these actions.  In fact, one can consider  a collection of endofunctors (e.g.,  loop space functors) on a homotopy theory $\cA$ as an action of a monoidal category generated by these functors. Such a homotopy theory is called stable if it is stable under these functors; that is, the monoidal category acts by auto-equivalences. The category of stable homotopy theories with respect to actions of a fixed monoidal category includes in the category of all homotopy theories with actions of the same monoidal category. Then a universal way to assign a stable homotopy theory to a homotopy theory can be given via weak adjoints (i.e., by the reflector and coreflector) of the inclusion of the subcategory of homotopy theories that are stable under the actions. To define these adjoints one merely needs the functors to be equivariant and existence of certain weak (co)limits.  In this setting, one can also easily consider  and construct  stabilizations with extra structures as weak $2$-limits, see Section \ref{sec:concluding}.
 
\subsection*{An outline of the main results}  We discuss our main theme for actions of an enriched monoidal category on objects in a suitably enriched $2$-category. Let $\VV$ be a monoidal category, $\ii$ be an essentially small $\VV$-enriched strict monoidal category and $\frM$ be a $\VCat$-enriched category.  In section \ref{sec:actionsofmonoidalcats}, we discuss $\ii$-actions and  introduce the $\VCat$-enriched category $\ii\frM$ of $\ii$-objects,  $\ii$-equivariant morphisms and $2$-cells  between them.   There is a distinguished full sub-{$\VCat$-category} $\stVM$ of $\ii\frM$  of stable $\ii$-objects, see Definition \ref{def:stable0cell}. Under the condition on existence of certain weak limits (resp. weak colimits), we show that $\stVM$ is coreflective (resp. reflective) in $\ii\frM$, see Theorems \ref{thm:invinftyisstab} and \ref{thm:coinvinftyiscostab}, and Corollary \ref{cor:main1}. These adjoints send  an $\ii$-object  to a stable $\ii$-object; that is, an $\ii$-object on which every object of $\ii$ acts by auto-equivalences.   If $\ii=\bbN$ acts on a relative category $\cA$, where the action is generated by the loop space functor, then the coreflector sends  $\cA$ to to a stable relative category of $\Omega$-spectrum objects in $\cA$ and reflector sends $\cA$ to a stable  relative category akin to  the Spanier-Whitehead category. Therefore, we call the coreflector \emph{stabilization} and the reflector \emph{costabilization}. These adjoints still exist in the lax sense, and in this case they are called \emph{lax stabilization} (which gives prespectra) and  \emph{lax costabilization}. We give constructions of these adjoints in terms of weak (co)ends. In particular, we show that various categories of spectra; such as, sequential spectra, coordinate free spectra, genuine equivariant spectra, parameterized spectra indexed by vector bundles  are obtained as weak ends in a $\VCat$-category. Meanwhile, categories of such prespectra correspond to lax ends. These categories arise formally as consequences of the constructions of relevant  weal $2$-limits  after fixing the  monoidal category, the ambient relative category and the action on it; i.e., after fixing the relative $\ii$-category.

Dually, we observe that the homotopy categories of  costabilizations of relative $\ii$-categories coincide with stable homotopy categories obtained using the constructions of Spanier-Whitehead like categories indexed by $\ii$, see \ref{ssec:laxcostabofrelativecats}.  In other words, ordinary Spanier-Whitehead category and its coordinate free version, equivariant version and parameterized version  are obtained as homotopy categories of certain weak colimits. Thus, we establish that stable homotopy categories of spectra indexed by a symmetric monoidal category $\ii$ are homotopy categories of stabilizations and Spanier-Whitehead like categories indexed by $\ii$ are homotopy categories of costabilizations of pointed $\ii$-homotopy theories, which reveals that homotopy category of spectra and the Spanier-Whitehead category are in fact dual constructions. 

We also show that reduced (co)homology theories, including the ones with exotic gradings, are obtained as objects in an appropriate stabilizations of categories of (co)homology functors with respect to certain monoidal category actions, see Section \ref{ssec:vvgradedtheories}. In particular, ordinary (co)homology theories, representation graded equivariant  (co)homology theories and vector bundles graded parameterized  (co)homology theories  are some of the examples we discuss, see Sections  \ref{sssec:spheregradedCohomologyTheories}, \ref{sssec:ROGradedCohomologyTheories} and \ref{sssec:bundlegradedparameterizedtheories}. This provides a great convenience in determining  the axioms for (co)homology theories with exotic gradings, as they immediately follow as formal consequences of  constructions of certain $2$-limits, and provides a unified framework for (co)homology theories with exotic gradings.

 \subsection*{Organization of the paper}
In Section \ref{sec:actionsofmonoidalcats} we give the definition of $\ii$-actions and the notion of $\ii$-equivariance.  We introduce the $\VCat$-category $\ii\frM$ of $\ii$-objects, $\ii$-equivariant morphisms and $2$-cells between them.  In section \ref{sec:stabilizationcostabilization}, we give definitions and constructions of stabilizations and costabilizations of $\ii$-objects with respect to $\ii$-actions. In section \ref{sec:examplesofstableactions} we show that homology and cohomology theories are objects in the stabilizations of $\ii$-actions on categories of homology and cohomology functors. In section \ref{sec:costabilizationsofrelativecategories} we consider the special case of actions on relative categories and obtain the various categories of spectra as stabilizations, and Spanier-Whitehead like categories as homotopy categories of costabilizations, of relative categories with respect to actions of symmetric monoidal categories. Lastly, we discuss some future directions in Section \ref{ssec:monoidalstructuresrelcat} on stabilization with extra structure; in particular, with an algebraic structure. This includes algebraic structures like symmetric monoidal structures.
\section{Actions of monoidal categories and equivariance}
\label{sec:actionsofmonoidalcats}
 While a collection of endomorphisms of a set generates a monoid, a collection of endofunctors of a category  together with a set of natural transformations between them generates a strict monoidal category.   The motivation for relating monoidal category actions with stabilizations arise from this simple observation.  For a given action of a monoid on a set, one can universally associate another set on which the monoid acts by bijections, see \cite{erdalunlu}. The stabilization of a homotopy theory, on the other hand, coincides with the categorification of this idea since it is obtained by formally inverting all suspension  (or loop space) functors. 
 
 The suspension or loop space functor are often considered as enriched functors over a monoidal category. Then the  monoidal category generated by these functors is also enriched.
 In this section, we present an enriched categorification of the definitions of monoid actions on sets and equivariant functions given in \cite{erdalunlu}. In particular, we define the action of an enriched monoidal category on an object in a enriched $2$-category and explore the properties of this notion.

\subsection*{Conventions on categories} \label{convention} We assume the axiom of Gr\"othendieck universes, \cite{dhks}. For a universe $\mathcal U$, when a monoidal category $\ii$ acts on an object $\cA$, we assume $\ii$ is $\mathcal U$-small and  $\cA$   belongs to a $\mathcal U^+$-category. In particular, if $\cA$ is a category itself, then $\cA$ will be a $\mathcal  U$-category belonging to a $\mathcal U^+$-category of $\mathcal U$-categories. Thus, by a small category we mean it is $\mathcal  U$-small with respect to a fixed universe $\mathcal  U$.

\subsection{{$\ii$}-actions}
\label{ssec:vvactions}
Let $(\VV,\otimes,\vunit)$ be a symmetric monoidal category and $\VCat$ denote the cartesian closed symmetric monoidal category of $\VV$-enriched categories (or simply $\VV$-categories). The unit of $\VCat$ is denoted by $\cunit$. We assume that the forgetful functor $\VCat\to \Cat$ is faithful. This, for instance, holds for $\VV=\Set,\ \cT,\ \cAb,\ \SP,$ etc. A category enriched in $\VCat$ will be called $\VCat$-category, $\VCat$-enriched functor will be called $\VCat$-functor and a $\VCat$-enriched natural transformation will be called $\VCat$-natural transformation. If $\VV=\Set$, the category of sets, then they will be just be called $2$-category, $2$-functor and $2$-natural transformation, respectively.

Let $(\ii,{\circledast}, {\unit} )$ be an essentially small $\VV$-enriched strict monoidal category. There is an associated $\VCat$-category $\mathbf{B}\ii$ with a single object $*$, called the delooping $\VCat$-category of $\ii$. The morphisms (i.e., $1$-morphisms) from $*$ to $*$ are objects of $\ii$ and the composition of two morphisms in $\mathbf{B}\ii$ is given by the monoidal product on  $\ii$. We write $\mathbf{B}\ii^{\op}$ for morphism dual of $\mathbf{B}\ii$; that is, the $\VCat$-category obtained by reversing its morphisms but not the $2$-cells. For the rest of the paper we use $\HH$ for the $\VCat$-category $\mathbf{B}\ii\times \mathbf{B}\ii^{\op}$. Let $\frM$ be a $\VCat$-category and $\cA$ be a object in $\frM$. An \emph{action of $\ii $ on $\cA$} (or a \emph{$\ii $-action on $\cA$}) is a strict $\VCat$-functor \[\alpha :\HH\rightarrow \frM\] where $\alpha ({*,*})=\cA$.

This definition is, in fact, the same as the definition of ordinary biaction. However, we construct a $\VCat$-category equipped with an exotic notion of $\ii$-equivariance, see Section \ref{ssec:vvequivariant1cells}, so that the category whose morphisms are $\ii$-equivariant will be different than the category of objects with bi-actions.

\begin{remark}\label{rem:pseudotostrict} Note that for $\VV=\Set$, we can define actions of a non-strict monoidal category  after strictification. In other words, if $\ii $ is a monoidal category (not necessarily strict) we can define a $\ii $-action as a $\mathrm{str}(\ii)$-action where $\mathrm{str}(\ii)$ denotes the strictification of the monoidal category $\ii$. Equivalently, for a arbitrary monoidal category $\ii$ we can  construct $\HH $ as above and any pseudofunctor from $\HH $ to a $2$-category can be considered as an $\ii$-action by the strictification adjunction given in \cite{campbell2019}.
\end{remark}  
Clearly,  ordinary actions of monoids are trivial examples.  Some examples where higher morphisms are more interesting are given in \cite{kelly2001} which can be considered as examples to our actions by composing with the projection from $\mathbf{B}\ii\times \mathbf{B}\ii^{\op}$ onto its left component $\mathbf{B}\ii$ and considering Remark \ref{rem:pseudotostrict}.

\begin{example}\label{ex:tensoraction}
	  If $\ii$ is a strict monoidal category, then $\ii$ (or any of its monoidal subcategory) acts on $\ii$ by its tensor product. One can define an action by tensoring from the left, or from the right or from both sides. More generally, if $\cA$ is $\ii$-enriched and (co)powered over $\ii$, then the (co)power defines an $\ii$-action on $\cA$.
\end{example}
\begin{example}\label{ex:1-cellgeneratedactiongeneratedaction}
Any $1$-endomorphism on a object $\cA$ in a $\VCat$-category ${\frM}$ generates a $\bbN$ action by considering $\bbN$ as a monoidal category with only identity morphisms. More generally, one can choose a collection of $1$-endomorphisms of $\cA$ and take the monoidal category generated by these endomorphisms. The resulting monoidal category acts on $\cA$. 
\end{example}

Some examples that have interest in homotopy theory are discussed in Section \ref{ssec:knownexamplesofvvgradedtheories}. 
\begin{notation}
	We use the notation $\cA_{\alpha}$ for a object $\cA$ with $\ii$-action $\alpha$.
\end{notation}

\subsection{Enriched functors and weak (co)ends}
\label{ssec:enrichedlaxpseudo}
Since the forgetful functor $\VCat\to \Cat$ is faithful, the $\VCat$-enriched weak (co)limits in this paper are defined in accordance with the notion of bilimits  in \cite[2.6]{birdflexiblelimits}. The weak ends, in particular, can be identified with hom-weighted weak limits. 
Let $F_0:\cC_0^{\op}\times \cC_0\to \frT$ a strict $2$-functor between  $2$-categories. Then the pseudo-end $\oint_{x:\cC}F({x,x})$ is an object in $\frT$ defined by the isomorphism
$$ 
\frT({Z},\oint_{x:\cC_0}F_0({x,x}))\cong \Psd(\cC_0^{\op}\times \cC_0,\Cat)\left( \hom_{\cC_0}(-,-), \frT({Z},F_0(-,-))\right) 
$$
that is natural in ${Z}$, where $\frT(-,-)$ denotes the hom-category in $\frT$. Here $ \Psd(\cC_0^{\op}\times \cC_0,\Cat)$ denotes the $2$-category of strict $2$-functors, pseudo-natural transformations and modifications. 
Similarly, the lax-end $\sqint_{x:\cC}F({x,x})$ is defined by replacing $\Psd$ with $\Lax$, the $2$-category of $2$-functors, lax natural transformations and modifications.

Let $\cC$ be a $\VCat$-category. For  $\VCat$-enriched functors $H,K:\cC\to\VCat$, the $\VV$-category $\Psd^\VV(\cC,\VCat)(H,K)$ is defined by $$\Psd^\VV(\cC,\VCat)(H,K)=\oint_{x:\cC_0}\VCat(H(x),K(x)).$$
Here $\VCat(-,-)$ denotes the internal hom in $\VCat.$ We obtain $\Lax^\VV(\cC,\VCat)(H,K)$ by replacing the pseudo-end with lax-end.
 
Let   $\frM$ be a $\VCat$-enriched category. Throughout the paper, by an equivalence between $\VV$-categories we mean an adjoint equivalence. We have a functor from $\frM^{\op }\times \frM$ to $\VCat $ which sends $({\cA},{\cB})$ to the $\VV$-category $\frM({\cA},{\cB})$, which we simply denote by $[{\cA},{\cB}]$. Given a $\VCat$-functor $F:\cC^{\op}\times \cC\to \frM$ between $\VCat$-categories, its weak end $\oint_{x:\cC}F({x,x})$ is defined by the equivalence in the $2$-category $\VCat$
$$ 
[{Z},\oint_{x:\cC}F({x,x})] \simeq \Psd^\VV(\cC^{\op}\times \cC,\VCat)\left( \hom_\cC(-,-), [{Z},F(-,-)]\right) 
$$
natural in ${Z}$. Similarly, the lax end $\sqint_{x:\cC}F({x,x})$ is defined by replacing $\Psd^\VV$ with $\Lax^\VV$. In other words, the weak end of $F$ is a $\hom_\cC$ weighted weak limit of $F$ (see also \cite[Defn. 5.1.11]{johnson20212}).

\subsection{$\VCat$-enriched weak end of an $\ii$-action}
\label{ssec:vcatenrichedends}
Let $\cA$ be a object in the $\VCat$-category $\frM$, and $\alpha:\HH\rightarrow \frM$ be an $\ii$-action on $\cA$. Since $\mathbf{B}\ii$ has a single object, The underlying category of the $\VV$-category
\[ \Psd^\VV(\HH,\VCat)\left( \hom_{\mathbf{B}\ii^{\op}}(-,-), [{Z},\alpha(-,-)]\right) \]
on the right hand side of the equivalence that defines the weak end
\[ {\oint}_{x:\mathbf{B}\ii^{\op}}\alpha({x,x}) \] is equivalent to the category whose objects are morphisms
\[\omega:{Z}\rightarrow \cA\] 
and invertible $2$-cells (i.e., a $2$-isomorphisms) \[\sigma({u}):\alpha({\unit}, {u}^{\op})\circ \omega \Rightarrow \alpha({u},{\unit}^{\op})\circ \omega\] assigned to all objects ${u}$ in $\ii$ so that
we have
	\begin{equation}\label{eqn:naturalityofsigma} \sigma({v})\bullet (\alpha(\id_{\unit},f) \rhd \omega) = (\alpha(f,\id_{{\unit} ^{\op}}) \rhd \omega)\bullet \sigma ({u})
	\end{equation}
for every morphism $f:{u}\rightarrow {v} $ in $\ii $ and
the following diagram of $2$-cells
	\begin{equation}\label{diag:triangleofend}\begin{tikzpicture}[scale=0.8]
	\node (A) at  (1,1) {$  \alpha({\unit},{v}^{\op})\circ\alpha({\unit},{u}^{\op})\circ \omega$ };
	\node (B) at (4.5,4) {$\alpha({\unit},{v}^{\op})\circ\alpha({u},{\unit}^{\op})\circ \omega = \alpha({u},{\unit}^{\op})\circ\alpha({\unit},{v}^{\op})\circ \omega$};
	\node (D) at (8,1) {$\alpha({u},{\unit}^{\op})\circ\alpha({v},{\unit}^{\op})\circ \omega$};
	\node (E) at (1,0) {$\alpha({\unit},({u}{\circledast} {v})^{\op})\circ \omega$};
	\node (G) at (8,0) {$\alpha({u}{\circledast}  {v},{\unit}^{\op})\circ \omega$};
\draw[-Implies,line width=.5pt,double distance=2pt]	(E) -- node[below]{$\sigma({u}{\circledast} {v})$} (G);
\draw[-Implies,line width=.5pt,double distance=2pt]	(A) -- node[left]{$\alpha({\unit},{v}^{\op})(\sigma({u})) \ \ $} (B);
\draw[-=,line width=.5pt,double distance=2pt]	(B) -- node[above]{} (B);
\draw[-Implies,line width=.5pt,double distance=2pt]	(B) -- node[right]{$ \ \ \alpha({u},{\unit}^{\op})(\sigma({v}))$} (D);
\draw[-=,line width=.5pt,double distance=2pt]	(D) -- node[right]{} (G);
\draw[-=,line width=.5pt,double distance=2pt]	(E) -- node[right]{} (A);
	\end{tikzpicture}\end{equation}
commutes for every ${u}$, ${v}$ in $\ii$ and
\begin{equation}\label{eqn:sigma1}
	\id_{\alpha({\unit},{\unit}^{\op })\circ \omega}=\sigma({\unit}).
\end{equation}
Morphisms from $(\omega ,\sigma)$ to $(\overline{\omega},\overline{\sigma})$ are $2$-cells $\theta $ from the morphism $\omega$ to the morphism $\overline{\omega}$ so that
\[ \overline{\sigma}(u) \bullet (\alpha({\unit} ,u^{\op}) \lhd \theta) = (\alpha(u,{\unit} ^{\op}) \lhd \theta)\bullet \sigma(u) \]
for all objects ${u}$ in $\ii$.  
	\begin{remark}
	In the case when $\frM$ has an object $*$ such that $[*,\cA]\cong \cA$ for every object $\cA$, than one obtains that \[ {\oint}_{x:\mathbf{B}\ii^{\op}}\alpha({x,x}) \] has objects as pairs $(a,\sigma)$ where $a$ is a morphism $*\to\cA$ and $\sigma$ is the natural transformation as above that satisfies \ref{eqn:naturalityofsigma}, \ref{diag:triangleofend} and \ref{eqn:sigma1}. A morphism from $(a ,\sigma)$ to $(\overline{a} ,\overline{\sigma})$ is a morphism  $\theta:a\to  \overline{a}$ in $\cA$ that satisfies \[ \overline{\sigma}(u) \bullet (\alpha({\unit} ,u^{\op}) \lhd \theta) = (\alpha(u,{\unit} ^{\op}) \lhd \theta)\bullet \sigma(u).\] If  $\frM=\CAT$, that is $\cA$ is a category, we can choose $Z$ as the terminal category $*$.
	\end{remark}

\subsection{{$\ii$}-equivariant {$1$}-cells}
\label{ssec:vvequivariant1cells}
Let \begin{equation}\label{eqn:psi}{\psi}:\HH   \longrightarrow
{\HH}^{\op}\times {\HH}\end{equation} 
be the $\VCat$-functor given by $\psi({u},{v}^{\op})=(({v},{u}^{\op})^{\op},({u},{v}^{\op}))$. Given two $\ii$-actions $\alpha $ and $\beta$ on objects ${\cA}_{\alpha}$ and ${\cB}_{\beta}$, we define an $\ii$-action on $[{\cA}_{\alpha},{\cB}_{\beta}]$ by the composition
\begin{equation}\label{eqn:actiononAB}\HH  \stackrel{\psi}\rightarrow
{\HH}^{\op}\times {\HH} \stackrel{(\alpha^{\op}, \beta)}\longrightarrow
\frM^{\op}\times \frM\stackrel{[- ,- ]}\longrightarrow \CAT .\end{equation} 
We denote this $\ii$-action by $[\alpha ,\beta]$. We define the category $\Fun_\ii ({\cA}_{\alpha},{\cB}_{\beta})$ of \emph{$\ii$-equivariant morphisms} from ${\cA}_{\alpha}$ to ${\cB}_{\beta}$ as the end  
\[\Fun_{\ii }({\cA}_{\alpha},{\cB}_{\beta})={\oint}_{x:\mathbf{B}\ii^{\op}}[\alpha,\beta]{(x,x) }\] provided that it exists. Any object in $\Fun_{\ii }({\cA}_{\alpha},{\cB}_{\beta})$ is called {$\ii$-equivariant morphism} from ${\cA}_{\alpha}$ to ${\cB}_{\beta}$.
The category $\Fun_\ii ({\cA}_{\alpha},{\cB}_{\beta})$ will in fact be considered as the hom-category of a $\VCat$-category. Therefore, we give an explicit  description of $\Fun_\ii ({\cA}_{\alpha},{\cB}_{\beta})$ that is functorial on $({\cA}_{\alpha},{\cB}_{\beta})$. Objects of  $\Fun_\ii ({\cA}_{\alpha},{\cB}_{\beta})$ are pairs $(f,\tau)$ such that $f$ is an object in $[{\cA}_{\alpha},{\cB}_{\beta}]$ and $\tau$ is a natural isomorphism from  
$[\alpha,\beta]({\unit},-^{\op})(f):\ii\to [{\cA}_{\alpha},{\cB}_{\beta}]$ to $  [\alpha,\beta](-,{\unit}^{\op})(f):\ii\to [{\cA}_{\alpha},{\cB}_{\beta}]$. We define $\omega(f,\tau)=f$ and  for every object ${u}$ in $\ii$, we define the $(f,\tau)$ component of the $2$-cell $\sigma({u})$   as $\tau({u})$. So that the above compatibility conditions are satisfied. In particular, given a
$\ii$-equivariant  morphism $(f,\tau)$ from ${\cA}_{\alpha}$ to ${\cB}_{\beta}$  we have an  morphism
\[\sigma({u})_{(f,\tau)}=\tau({u}):\beta({\unit},{u}^{\op})\circ f  \circ \alpha({u},{\unit}^{\op}) \rightarrow \beta({u},{\unit}^{\op})\circ  f  \circ \alpha({\unit},{u}^{\op}) \] in $[{\cA}_{\alpha},{\cB}_{\beta}]$ for every ${u}$ in $\ii$.

\subsection{The $\VCat$-category of $\ii$-actions}
\label{ssec:2categoryvvM}
We define two functors $\pi_l,\,\pi_r :\HH\to \HH $ as  $\pi_l:({u},{v}^{\op})\mapsto ({u},{\unit}^{\op})$ and  $\pi_r:({u},{v}^{\op})\mapsto ({\unit},{v}^{\op})$ on morphisms and corresponding projections on $2$-cells. We say a $\ii $-action $\alpha $ is a right (respectively left) action if $\alpha\circ \pi _r=\alpha $ (respectively $\alpha\circ \pi _l=\alpha $). An action will be called an action mute on one side if it is a right action or a left action. We will say a left action is mute on right and a right action is mute on left. Now we start defining the $\VCat$-category $\ii \frM$. The objects of $\ii \frM$ are the objects of $\frM$ equipped with a $\ii $-action mute on at least one side. The morphisms of $\ii \frM$ are $\ii$-equivariant morphisms as defined in the previous section. Now we define the composition of two equivariant morphisms.

Let   $\cA_{\alpha}$, $\cB_{\beta}$, and $\cC_{\gamma}$ be objects in $\ii \frM$. Assume  $(f,\tau_f)$ is an $\ii$-equivariant morphism from $\cA_{\alpha}$ to $\cB_{\beta}$ and $(g,\tau_g)$ is an $\ii$-equivariant morphism from $\cB_{\beta}$ to $\cC_{\gamma}$. We define the composition of  $(f,\tau_f)$ and $(g,\tau_g)$ as the pair  $(g\circ f,\tau_{g\circ f})$  where   $\tau_{g\circ f}({u})$ for ${u}$ in $\ii$ is defined as follows:  In case $\beta $ is mute on left and hence a right action, for ${u}$ in $\ii$, we have $\beta({u},{\unit}^{\op})=\id_\cB$,  so we can define $\tau_{g\circ f}({u})$ as the isomorphism given by the vertical composition of the isomorphisms:
\[\begin{tikzpicture}[scale=1]
	\node (71) at  (0,6) {$\cA$ };
	\node (72) at (2,6) {$\cA$};
	\node (73) at (4,6) {$\cB$};
	\node (74) at (6,6) {$\cB$};
	\node (75) at (8,6) {$\cC$};
	\node (76) at (10,6) {$\cC$};

	\node (61) at  (0,4.5) {$\cA$ };
	\node (62) at (2,4.5) {$\cA$};
	\node (63) at (4,4.5) {$\cB$};
	\node (64) at (6,4.5) {$\cB$};
	\node (65) at (8,4.5) {$\cC$};
	\node (66) at (10,4.5) {$\cC$};

	\node (51) at  (0,3) {$\cA$ };
	\node (52) at (2,3) {$\cA$};
	\node (53) at (4,3) {$\cB$};
	\node (54) at (6,3) {$\cB$};
	\node (55) at (8,3) {$\cC$};
	\node (56) at (10,3) {$\cC$};

	\path[->,font=\scriptsize,>=angle 90]

	(71) edge node[above]  {${\alpha({u},{\unit}^{\op})}$} (72)
	(72) edge node[above]  {$ f $} (73)
	(73) edge node[above]  {${\beta({u},{\unit}^{\op})}$} (74)
	(74) edge node[below] (3g) {} node[above]  {$ g $} (75)
	(75) edge node[above]  {${\gamma({\unit},{u}^{\op})}$} (76)

	(61) edge node[above]  {${\alpha({u},{\unit}^{\op})}$} (62)
	(62) edge node[below] (3f) {} node[above]  {$ f $} (63)
	(63) edge node[above]  {${\beta({\unit},{u}^{\op})}$} (64)
	(64) edge   node[above] (2g) {$ g $} (65)
	(65) edge node[above]  {${\gamma({u},{\unit}^{\op})}$} (66)

	(51) edge node[above]  {${\alpha({\unit},{u}^{\op})}$} (52)
	(52) edge node[above]  (2f) {$ f $} (53)
	(53) edge node[above]  {${\beta({u},{\unit}^{\op})}$} (54)
	(54) edge node[above] {$ g $} (55)
	(55) edge node[above]  {${\gamma({u},{\unit}^{\op})}$} (56)

 	(72) edge[bend left] node [above] {$g\circ f$} (75)
(52) edge[bend right] node [below] {${g\circ f}$} (55);

	\draw[-=,line width=.5pt,double distance=2pt] (51) -- node[right]{} (61);
	
	
	\draw[-=,line width=.5pt,double distance=2pt] (54) -- node[right]{} (64); 
	\draw[-=,line width=.5pt,double distance=2pt] (55) -- node[right]{} (65); 
	\draw[-=,line width=.5pt,double distance=2pt] (56) -- node[right]{} (66);

	\draw[-=,line width=.5pt,double distance=2pt] (61) -- node[right]{} (71); 
	\draw[-=,line width=.5pt,double distance=2pt] (62) -- node[right]{} (72); 
	\draw[-=,line width=.5pt,double distance=2pt] (63) -- node[right]{} (73); 
	
	
	\draw[-=,line width=.5pt,double distance=2pt] (66) -- node[right]{} (76);


	\draw[-{Implies},double distance=3pt,shorten >=2pt,shorten <=2pt,font=\scriptsize,] (3g) to node[scale=1] [right] {$\tau_{g}({u})$} (2g);
	\draw[-{Implies},double distance=3pt,shorten >=2pt,shorten <=2pt,font=\scriptsize,] (3f) to node[scale=1] [right] {$\tau_{f}({u})$} (2f);
\end{tikzpicture}\]
 In case $\beta $ is mute on the right and hence a left action, for ${u}$ in $\ii$ we have  $\beta({\unit} , {u}^{\op})=\id_\cB$ and so we can define $\tau_{g\circ f}({u})$ as the isomorphism given by the vertical composition of the following isomorphisms:
 \[\begin{tikzpicture}[scale=1]

 	\node (71) at  (0,6) {$\cA$ };
 	\node (72) at (2,6) {$\cA$};
 	\node (73) at (4,6) {$\cB$};
 	\node (74) at (6,6) {$\cB$};
 	\node (75) at (8,6) {$\cC$};
 	\node (76) at (10,6) {$\cC$};

 	\node (61) at  (0,4.5) {$\cA$ };
 	\node (62) at (2,4.5) {$\cA$};
 	\node (63) at (4,4.5) {$\cB$};
 	\node (64) at (6,4.5) {$\cB$};
 	\node (65) at (8,4.5) {$\cC$};
 	\node (66) at (10,4.5) {$\cC$};

 	\node (51) at  (0,3) {$\cA$ };
 	\node (52) at (2,3) {$\cA$};
 	\node (53) at (4,3) {$\cB$};
 	\node (54) at (6,3) {$\cB$};
 	\node (55) at (8,3) {$\cC$};
 	\node (56) at (10,3) {$\cC$};

 	\path[->,font=\scriptsize,>=angle 90]

 	(71) edge node[above]  {${\alpha({u},{\unit}^{\op})}$} (72)
 	(72) edge node[below] (3f) {} node[above]  {$ f $} (73)
 	(73) edge node[above]  {${\beta({\unit},{u}^{\op})}$} (74)
 	(74) edge node[above]  {$ g $} (75)
 	(75) edge node[above]  {${\gamma({\unit},{u}^{\op})}$} (76)

 	(61) edge node[above]  {${\alpha({u},{\unit}^{\op})}$} (62)
 	(62) edge node[above] (2f)  {$ f $} (63)
 	(63) edge node[above]  {${\beta({u},{\unit}^{\op})}$} (64)
 	(64) edge node[below] (3g) {} node[above] {$ g $} (65)
 	(65) edge node[above]  {${\gamma({u},{\unit}^{\op})}$} (66)

 	(51) edge node[above]  {${\alpha({\unit},{u}^{\op})}$} (52)
 	(52) edge node[above]  {$ f $} (53)
 	(53) edge node[above]  {${\beta({\unit},{u}^{\op})}$} (54)
 	(54) edge node[above] (2g) {$ g $} (55)
 	(55) edge node[above]  {${\gamma({u},{\unit}^{\op})}$} (56)

 	(72) edge[bend left] node [above] {${g\circ f}$} (75)
 	(52) edge[bend right] node [below] {${g\circ f}$} (55)

 	;

 	\draw[-=,line width=.5pt,double distance=2pt] (51) -- node[right]{} (61);
 	
 	\draw[-=,line width=.5pt,double distance=2pt] (52) -- node[right]{} (62); 
 	\draw[-=,line width=.5pt,double distance=2pt] (53) -- node[right]{} (63); 
 	
 	\draw[-=,line width=.5pt,double distance=2pt] (56) -- node[right]{} (66);

 	\draw[-=,line width=.5pt,double distance=2pt] (61) -- node[right]{} (71); 
 	
 	\draw[-=,line width=.5pt,double distance=2pt] (64) -- node[right]{} (74); 
 	\draw[-=,line width=.5pt,double distance=2pt] (65) -- node[right]{} (75); 
 	
 	\draw[-=,line width=.5pt,double distance=2pt] (66) -- node[right]{} (76);


 	\draw[-{Implies},double distance=3pt,shorten >=2pt,shorten <=2pt,font=\scriptsize,] (3g) to node[scale=1] [right] {$\tau_{g}({u})$} (2g);
 	\draw[-{Implies},double distance=3pt,shorten >=2pt,shorten <=2pt,font=\scriptsize,] (3f) to node[scale=1] [right] {$\tau_{f}({u})$} (2f);
 \end{tikzpicture}\]
 
Notice in case $\beta $ is mute on both sides, these definitions coincide by the interchange law. 

Let   $\cA_{\alpha}$, $\cB_{\beta}$, $\cC_{\gamma}$ and $\cD_\delta$ be objects in $\ii \frM$ and  $(f,\tau_f):\cA_{\alpha}\to \cB_{\beta}$, $(g,\tau_g):\cB_{\beta}\to \cC_{\gamma}$ and $(h,\tau_h):\cC_{\gamma}\to \cD_\delta$ be $\ii$-equivariant morphisms. In the case when $\beta$ and $\gamma$  are both  mute on the left, the associativity can be obtained from the following diagram:
\[\begin{tikzpicture}[scale=.9]
		\node (71) at  (0,6) {$\cA$ };
	\node (72) at (2,6) {$\cA$};
	\node (73) at (4,6) {$\cB$};
	\node (74) at (6,6) {$\cB$};
	\node (75) at (8,6) {$\cC$};
	\node (76) at (10,6) {$\cC$};
	\node (77) at (12,6) {$\cD$};
	\node (78) at (14,6) {$\cD$};

	\node (61) at  (0,4.5) {$\cA$ };
	\node (62) at (2,4.5) {$\cA$};
	\node (63) at (4,4.5) {$\cB$};
	\node (64) at (6,4.5) {$\cB$};
	\node (65) at (8,4.5) {$\cC$};
	\node (66) at (10,4.5) {$\cC$};
	\node (67) at (12,4.5) {$\cD$};
	\node (68) at (14,4.5) {$\cD$};
	
	\node (51) at  (0,3) {$\cA$ };
	\node (52) at (2,3) {$\cA$};
	\node (53) at (4,3) {$\cB$};
	\node (54) at (6,3) {$\cB$};
	\node (55) at (8,3) {$\cC$};
	\node (56) at (10,3) {$\cC$};
	\node (57) at (12,3) {$\cD$};
	\node (58) at (14,3) {$\cD$};

	\node (41) at  (0,1.5) {$\cA$ };
	\node (42) at (2,1.5) {$\cA$};
	\node (43) at (4,1.5) {$\cB$};
	\node (44) at (6,1.5) {$\cB$};
	\node (45) at (8,1.5) {$\cC$};
	\node (46) at (10,1.5) {$\cC$};
	\node (47) at (12,1.5) {$\cD$};
	\node (48) at (14,1.5) {$\cD$};

	\path[->,font=\scriptsize,>=angle 90]

	(71) edge node[above]  {${\alpha({u},{\unit}^{\op})}$} (72)
	(72) edge node[above]  {$ f $} (73)
	(73) edge node[above]  {${\beta({u},{\unit}^{\op})}$} (74)
	(74) edge node[above]  {$ g $} (75)
	(75) edge node[above]  {${\gamma({u},{\unit}^{\op})}$} (76)
	(76) edge node[below] (4h) {} node[above]  {$ h $} (77)
	(77) edge node[above]  {${\delta({\unit},{u}^{\op})}$} (78)

	(61) edge node[above]  {${\alpha({u},{\unit}^{\op})}$} (62)
	(62) edge node[above]  {$ f $} (63)
	(63) edge node[above]  {${\beta({u},{\unit}^{\op})}$} (64)
	(64) edge node[below] (3g) {} node[above]  {$ g $} (65)
	(65) edge node[above]  {${\gamma({\unit},{u}^{\op})}$} (66)
	(66) edge node[above] (3h) {$ h $} (67)
	(67) edge node[above]  {${\delta({u},{\unit}^{\op})}$} (68)

	(51) edge node[above]  {${\alpha({u},{\unit}^{\op})}$} (52)
	(52) edge node[below] (3f) {} node[above]  {$ f $} (53)
	(53) edge node[above]  {${\beta({\unit},{u}^{\op})}$} (54)
	(54) edge node[above] (2g) {$ g $} (55)
	(55) edge node[above]  {${\gamma({u},{\unit}^{\op})}$} (56)
	(56) edge node[above]  {$ h $} (57)
	(57) edge node[above]  {${\delta({u},{\unit}^{\op})}$} (58)

	(41) edge node[above]  {${\alpha({\unit},{u}^{\op})}$} (42)
	(42) edge node[above] (2f) {$ f $} (43)
	(43) edge node[above]  {${\beta({u},{\unit}^{\op})}$} (44)
	(44) edge node[above]  {$ g $} (45)
	(45) edge node[above]  {${\gamma({u},{\unit}^{\op})}$} (46)
	(46) edge node[above]  {$ h $} (47)
	(47) edge node[above]  {${\delta({u},{\unit}^{\op})}$} (48)
	
	(72) edge[bend left=30] node [above] {$h\circ (g\circ f)=(h\circ g)\circ f$} (77) 
	(42) edge[bend right=30] node [below] {$h\circ (g\circ f)=(h\circ g)\circ f$} (47) 
	;

	\draw[-=,line width=.5pt,double distance=2pt] (41) -- node[right]{} (51); 
	\draw[-=,line width=.5pt,double distance=2pt] (51) -- node[right]{} (61); 
	\draw[-=,line width=.5pt,double distance=2pt] (61) -- node[right]{} (71);

	\draw[-=,line width=.5pt,double distance=2pt] (52) -- node[right]{} (62); 
	\draw[-=,line width=.5pt,double distance=2pt] (62) -- node[right]{} (72);

	\draw[-=,line width=.5pt,double distance=2pt] (53) -- node[right]{} (63); 
	\draw[-=,line width=.5pt,double distance=2pt] (63) -- node[right]{} (73);

	\draw[-=,line width=.5pt,double distance=2pt] (44) -- node[right]{} (54); 
	\draw[-=,line width=.5pt,double distance=2pt] (64) -- node[right]{} (74); 

	\draw[-=,line width=.5pt,double distance=2pt] (45) -- node[right]{} (55); 
	\draw[-=,line width=.5pt,double distance=2pt] (65) -- node[right]{} (75);

	\draw[-=,line width=.5pt,double distance=2pt] (46) -- node[right]{} (56); 
	\draw[-=,line width=.5pt,double distance=2pt] (56) -- node[right]{} (66); 

	\draw[-=,line width=.5pt,double distance=2pt] (47) -- node[right]{} (57); 
	\draw[-=,line width=.5pt,double distance=2pt] (57) -- node[right]{} (67); 

	\draw[-=,line width=.5pt,double distance=2pt] (48) -- node[right]{} (58); 
	\draw[-=,line width=.5pt,double distance=2pt] (58) -- node[right]{} (68); 
	\draw[-=,line width=.5pt,double distance=2pt] (68) -- node[right]{} (78); 
	
	\draw[-{Implies},double distance=3pt,shorten >=2pt,shorten <=2pt,font=\scriptsize,] (4h) to node[scale=1] [right] {$\tau_h({u})$} (3h);
	\draw[-{Implies},double distance=3pt,shorten >=2pt,shorten <=2pt,font=\scriptsize,] (3g) to node[scale=1] [right] {$\tau_{g}({u})$} (2g);
	\draw[-{Implies},double distance=3pt,shorten >=2pt,shorten <=2pt,font=\scriptsize,] (3f) to node[scale=1] [right] {$\tau_{f}({u})$} (2f);
\end{tikzpicture}\]

If $\beta$ is mute on the left and $\gamma$ is mute on the right then $\tau_{h\circ (g\circ f)}({u})$ is obtained from the following diagram 
 \[\begin{tikzpicture}[scale=.9]	\node (71) at  (0,6) {$\cA$ };
	\node (72) at (2,6) {$\cA$};
	\node (73) at (4,6) {$\cB$};
	\node (74) at (6,6) {$\cB$};
	\node (75) at (8,6) {$\cC$};
	\node (76) at (10,6) {$\cC$};
	\node (77) at (12,6) {$\cD$};
	\node (78) at (14,6) {$\cD$};

	\node (61) at  (0,4.5) {$\cA$ };
	\node (62) at (2,4.5) {$\cA$};
	\node (63) at (4,4.5) {$\cB$};
	\node (64) at (6,4.5) {$\cB$};
	\node (65) at (8,4.5) {$\cC$};
	\node (66) at (10,4.5) {$\cC$};
	\node (67) at (12,4.5) {$\cD$};
	\node (68) at (14,4.5) {$\cD$};
	
	\node (51) at  (0,3) {$\cA$ };
	\node (52) at (2,3) {$\cA$};
	\node (53) at (4,3) {$\cB$};
	\node (54) at (6,3) {$\cB$};
	\node (55) at (8,3) {$\cC$};
	\node (56) at (10,3) {$\cC$};
	\node (57) at (12,3) {$\cD$};
	\node (58) at (14,3) {$\cD$};

	\node (41) at  (0,1.5) {$\cA$ };
	\node (42) at (2,1.5) {$\cA$};
	\node (43) at (4,1.5) {$\cB$};
	\node (44) at (6,1.5) {$\cB$};
	\node (45) at (8,1.5) {$\cC$};
	\node (46) at (10,1.5) {$\cC$};
	\node (47) at (12,1.5) {$\cD$};
	\node (48) at (14,1.5) {$\cD$};

	\path[->,font=\scriptsize,>=angle 90]

	(71) edge node[above]  {${\alpha({u},{\unit}^{\op})}$} (72)
	(72) edge node[above]  {$ f $} (73)
	(73) edge node[above]  {${\beta({u},{\unit}^{\op})}$} (74)
	(74) edge node[below] (3g) {} node[above]  {$ g $} (75)
	(75) edge node[above]  {${\gamma({\unit},{u}^{\op})}$} (76)
	(76) edge node[above]  {$ h $} (77)
	(77) edge node[above]  {${\delta({\unit},{u}^{\op})}$} (78)

	(61) edge node[above]  {${\alpha({u},{\unit}^{\op})}$} (62)
	(62) edge node[below] (3f) {} node[above]  {$ f $} (63)
	(63) edge node[above]  {${\beta({\unit},{u}^{\op})}$} (64)
	(64) edge   node[above] (2g) {$ g $} (65)
	(65) edge node[above]  {${\gamma({u},{\unit}^{\op})}$} (66)
	(66) edge  node[above] {$ h $} (67)
	(67) edge node[above]  {${\delta({\unit},{u}^{\op})}$} (68)

	(51) edge node[above]  {${\alpha({\unit},{u}^{\op})}$} (52)
	(52) edge node[above]  (2f) {$ f $} (53)
	(53) edge node[above]  {${\beta({u},{\unit}^{\op})}$} (54)
	(54) edge node[above] {$ g $} (55)
	(55) edge node[above]  {${\gamma({u},{\unit}^{\op})}$} (56)
	(56) edge node[below] (4h) {} node[above] {$ h $} (57)
	(57) edge node[above]  {${\delta({\unit},{u}^{\op})}$} (58)

	(41) edge node[above]  {${\alpha({\unit},{u}^{\op})}$} (42)
	(42) edge node[above] {$ f $} (43)
	(43) edge node[above]  {${\beta({u},{\unit}^{\op})}$} (44)
	(44) edge node[above]  {$ g $} (45)
	(45) edge node[above]  {${\gamma({\unit},{u}^{\op})}$} (46)
	(46) edge node[above]   (3h) {$ h $} (47)
	(47) edge node[above]  {${\delta({u},{\unit}^{\op})}$} (48)

	(72) edge[bend left=30] node [above] {$h\circ (g\circ f)$} (77) 
	
	(42) edge[bend right=30] node [below] {$h\circ (g\circ f)$} (47) 
	;

	\draw[-=,line width=.5pt,double distance=2pt] (41) -- node[right]{} (51); 
	\draw[-=,line width=.5pt,double distance=2pt] (51) -- node[right]{} (61); 
	\draw[-=,line width=.5pt,double distance=2pt] (61) -- node[right]{} (71);

	\draw[-=,line width=.5pt,double distance=2pt] (42) -- node[right]{} (52); 
	\draw[-=,line width=.5pt,double distance=2pt] (62) -- node[right]{} (72);

	\draw[-=,line width=.5pt,double distance=2pt] (43) -- node[right]{} (53); 
	\draw[-=,line width=.5pt,double distance=2pt] (63) -- node[right]{} (73);

	\draw[-=,line width=.5pt,double distance=2pt] (44) -- node[right]{} (54); 
	\draw[-=,line width=.5pt,double distance=2pt] (54) -- node[right]{} (64); 

	\draw[-=,line width=.5pt,double distance=2pt] (45) -- node[right]{} (55); 
	\draw[-=,line width=.5pt,double distance=2pt] (55) -- node[right]{} (65); 

	\draw[-=,line width=.5pt,double distance=2pt] (56) -- node[right]{} (66); 
	\draw[-=,line width=.5pt,double distance=2pt] (66) -- node[right]{} (76);

	\draw[-=,line width=.5pt,double distance=2pt] (57) -- node[right]{} (67); 
	\draw[-=,line width=.5pt,double distance=2pt] (67) -- node[right]{} (77);

	\draw[-=,line width=.5pt,double distance=2pt] (48) -- node[right]{} (58); 
	\draw[-=,line width=.5pt,double distance=2pt] (58) -- node[right]{} (68); 
	\draw[-=,line width=.5pt,double distance=2pt] (68) -- node[right]{} (78); 
	
	\draw[-{Implies},double distance=3pt,shorten >=2pt,shorten <=2pt,font=\scriptsize,] (4h) to node[scale=1] [right] {$\tau_h({u})$} (3h);
	\draw[-{Implies},double distance=3pt,shorten >=2pt,shorten <=2pt,font=\scriptsize,] (3g) to node[scale=1] [right] {$\tau_{g}({u})$} (2g);
	\draw[-{Implies},double distance=3pt,shorten >=2pt,shorten <=2pt,font=\scriptsize,] (3f) to node[scale=1] [right] {$\tau_{f}({u})$} (2f);
\end{tikzpicture}\]
and $\tau_{(h\circ g)\circ f}({u})$ is obtained from the following diagram 
\[\begin{tikzpicture}[scale=.9]
	\node (71) at  (0,6) {$\cA$ };
	\node (72) at (2,6) {$\cA$};
	\node (73) at (4,6) {$\cB$};
	\node (74) at (6,6) {$\cB$};
	\node (75) at (8,6) {$\cC$};
	\node (76) at (10,6) {$\cC$};
	\node (77) at (12,6) {$\cD$};
	\node (78) at (14,6) {$\cD$};

	\node (61) at  (0,4.5) {$\cA$ };
	\node (62) at (2,4.5) {$\cA$};
	\node (63) at (4,4.5) {$\cB$};
	\node (64) at (6,4.5) {$\cB$};
	\node (65) at (8,4.5) {$\cC$};
	\node (66) at (10,4.5) {$\cC$};
	\node (67) at (12,4.5) {$\cD$};
	\node (68) at (14,4.5) {$\cD$};
	
	\node (51) at  (0,3) {$\cA$ };
	\node (52) at (2,3) {$\cA$};
	\node (53) at (4,3) {$\cB$};
	\node (54) at (6,3) {$\cB$};
	\node (55) at (8,3) {$\cC$};
	\node (56) at (10,3) {$\cC$};
	\node (57) at (12,3) {$\cD$};
	\node (58) at (14,3) {$\cD$};

	\node (41) at  (0,1.5) {$\cA$ };
	\node (42) at (2,1.5) {$\cA$};
	\node (43) at (4,1.5) {$\cB$};
	\node (44) at (6,1.5) {$\cB$};
	\node (45) at (8,1.5) {$\cC$};
	\node (46) at (10,1.5) {$\cC$};
	\node (47) at (12,1.5) {$\cD$};
	\node (48) at (14,1.5) {$\cD$};

	\path[->,font=\scriptsize,>=angle 90]

	(71) edge node[above]  {${\alpha({u},{\unit}^{\op})}$} (72)
	(72) edge node[above]  {$ f $} (73)
	(73) edge node[above]  {${\beta({u},{\unit}^{\op})}$} (74)
	(74) edge node[below] (3g) {} node[above]  {$ g $} (75)
	(75) edge node[above]  {${\gamma({\unit},{u}^{\op})}$} (76)
	(76) edge node[above]  {$ h $} (77)
	(77) edge node[above]  {${\delta({\unit},{u}^{\op})}$} (78)

	(61) edge node[above]  {${\alpha({u},{\unit}^{\op})}$} (62)
	(62) edge node[above]  {$ f $} (63)
	(63) edge node[above]  {${\beta({\unit},{u}^{\op})}$} (64)
	(64) edge   node[above] (2g) {$ g $} (65)
	(65) edge node[above]  {${\gamma({u},{\unit}^{\op})}$} (66)
	(66) edge node[below] (4h) {} node[above] {$ h $} (67)
	(67) edge node[above]  {${\delta({\unit},{u}^{\op})}$} (68)

	(51) edge node[above]  {${\alpha({u},{\unit}^{\op})}$} (52)
	(52) edge node[below] (3f) {} node[above]  {$ f $} (53)
	(53) edge node[above]  {${\beta({\unit},{u}^{\op})}$} (54)
	(54) edge node[above] {$ g $} (55)
	(55) edge node[above]  {${\gamma({\unit},{u}^{\op})}$} (56)
	(56) edge node[above]  (3h) {$ h $} (57)
	(57) edge node[above]  {${\delta({u},{\unit}^{\op})}$} (58)

	(41) edge node[above]  {${\alpha({\unit},{u}^{\op})}$} (42)
	(42) edge node[above] (2f) {$ f $} (43)
	(43) edge node[above]  {${\beta({u},{\unit}^{\op})}$} (44)
	(44) edge node[above]  {$ g $} (45)
	(45) edge node[above]  {${\gamma({\unit},{u}^{\op})}$} (46)
	(46) edge node[above]  {$ h $} (47)
	(47) edge node[above]  {${\delta({u},{\unit}^{\op})}$} (48)
	
	(72) edge[bend left=30] node [above] {$(h\circ g)\circ f$} (77) 

(42) edge[bend right=30] node [below] {$(h\circ g)\circ f$} (47) 
	;

	\draw[-=,line width=.5pt,double distance=2pt] (41) -- node[right]{} (51); 
	\draw[-=,line width=.5pt,double distance=2pt] (51) -- node[right]{} (61); 
	\draw[-=,line width=.5pt,double distance=2pt] (61) -- node[right]{} (71);

	\draw[-=,line width=.5pt,double distance=2pt] (52) -- node[right]{} (62); 
	\draw[-=,line width=.5pt,double distance=2pt] (62) -- node[right]{} (72);

	\draw[-=,line width=.5pt,double distance=2pt] (53) -- node[right]{} (63); 
	\draw[-=,line width=.5pt,double distance=2pt] (63) -- node[right]{} (73);

	\draw[-=,line width=.5pt,double distance=2pt] (44) -- node[right]{} (54); 
	\draw[-=,line width=.5pt,double distance=2pt] (54) -- node[right]{} (64); 

	\draw[-=,line width=.5pt,double distance=2pt] (45) -- node[right]{} (55); 
	\draw[-=,line width=.5pt,double distance=2pt] (55) -- node[right]{} (65); 

	\draw[-=,line width=.5pt,double distance=2pt] (46) -- node[right]{} (56); 
	\draw[-=,line width=.5pt,double distance=2pt] (66) -- node[right]{} (76);

	\draw[-=,line width=.5pt,double distance=2pt] (47) -- node[right]{} (57); 
	\draw[-=,line width=.5pt,double distance=2pt] (67) -- node[right]{} (77);

	\draw[-=,line width=.5pt,double distance=2pt] (48) -- node[right]{} (58); 
	\draw[-=,line width=.5pt,double distance=2pt] (58) -- node[right]{} (68); 
	\draw[-=,line width=.5pt,double distance=2pt] (68) -- node[right]{} (78); 
	
	\draw[-{Implies},double distance=3pt,shorten >=2pt,shorten <=2pt,font=\scriptsize,] (4h) to node[scale=1] [right] {$\tau_h({u})$} (3h);
	\draw[-{Implies},double distance=3pt,shorten >=2pt,shorten <=2pt,font=\scriptsize,] (3g) to node[scale=1] [right] {$\tau_{g}({u})$} (2g);
	\draw[-{Implies},double distance=3pt,shorten >=2pt,shorten <=2pt,font=\scriptsize,] (3f) to node[scale=1] [right] {$\tau_{f}({u})$} (2f);
\end{tikzpicture}\]
Using the exchange law, since every unlabeled square is identity, we obtain that $$\tau_{(h\circ g)\circ f}({u})=\tau_{h\circ (g\circ f)}({u}).$$  
The remaining cases are similar to one of the cases above. It is also straightforward to check that the  diagram in \ref{diag:triangleofend} commutes. Therefore, we obtain that the composition in $\ii \frM$ is strictly associative. It is now easy to see that the unitors in $\ii \frM$ are also identity, which makes $\ii \frM$ a $\VCat$-category. 

 The category $\ii\frM$ does not have products, however it contains two nice subcategories; namely, the subcategories of objects with left $\ii$-actions and of objects with  right $\ii$-actions. Any shape of weak (co)limit that exists in $\frM$ also exists in both of these subcategories. 

\subsection{Stable {$\ii$}-objects}
\label{ssec:stable0cell}
We define the notion of stability for $\ii$-objects under an $\ii$-action. 
\begin{definition}\label{def:stable0cell}
	We say  a object $\cA_\alpha$ in  $\ii\frM$  is stable if $\alpha$ is mute on the right (resp. left) and there exits a $\ii $-action $\beta$ on a object $\cB$ that is mute on the left (resp. right) such that the objects of ${\cA}_{\alpha}$ and ${\cB}_{\beta}$  are  $1$-equivalent in  $\ii \frM$. The full sub-$\VCat$-category of stable  $\ii$--objects is  denoted by $\stVM$.  
\end{definition}

\begin{example}If every object in a monoidal category is tensor invertible, then any object  on which this monoidal category acts is stable. In particular, if we consider a group as a monoidal category, any object with an  action of this group is stable. 	
 \end{example}
\begin{example} If $\cA$ is a object in $\frM$, then it is stable under the $\bbN$-action generated by a self $1$-isomorphism of $\cA$. More generally, a object is stable under the action generated by a collection of its self $1$-isomorphisms.
 	
 \end{example}
\begin{example} The category of chain complexes is stable under the $\bbN$-action defined by shifts.  
 \end{example}
\begin{example}  Any stable derivator (see \cite[Defn. 5.5]{coley2019derivators}) is stable under the $\bbN$-action defined by suspensions. For more details on derivators, see \cite{heller,groth2013derivators}. \end{example}
 
The main motivation of the definition above follows from the notion of  spectra.  Historically, spectra is introduced due to the desire of having ``negative dimensional spheres" and the category of spectra. This is achieved by inverting the suspension or loop functors.  The category of spectra is   the result of   forcing the $\bbN$-action on the category of spaces given by suspensions (or loop space functors) to be reversed, so that it is a self-equivalence on this new category, see also \ref{sssec:sequentialspectra}. Since suspension functor has many other special properties, there are other equivalent ways of defining stable homotopy categories; however, the generalization above based on the most essential and  historical motivations; that is, reversing an action. Moreover, this definition also includes various other examples of spectra with different indexes.
\section{Stabilizations and costabilizations with respect to {$\ii$}-actions}\label{sec:stabilizationcostabilization}
 For this section we will define approximations to $\ii$-objects by stable $\ii$-objects. This approximations will be defined as  right and left biadjoints to the inclusion of stable $\ii$-objects into $\ii$-objects. For this section $\bfV$ denotes a sub-$\VCat$-category of $\VCat$ with objects $\{\ii,\unit\}$, with $1$-morpshisms containing the set of all functors  from $\ii$ to $\ii$ induced by the left and right multiplication in $\ii$ and the inlcusion functor $\unit\to \ii$.  
\subsection{Stabilization of {$\ii$}-objects}
 We define the notion of the stabilization as follows: 


\begin{definition}\label{def:stabilization}
	Let $\cA_{\alpha}$ be a object in $\ii\frM$.	 The stabilization of $\cA_{\alpha}$ is a stable object $\Stab_{\ii }(\cA)_{\underline{\alpha}}$, together with an $\ii$-equivariant morphism $\epsilon :\Stab_{\ii }(\cA)_{\underline{\alpha}}\to \cA_{\alpha}$ such that for every stable object $\cB_{\beta }$, the induced $\VV$-functor $\epsilon_*:\Fun_{\ii}(\cB_{\beta },\Stab_{\ii }(\cA)_{\underline{\alpha}})\to \Fun_{\ii }(\cB_{\beta },\cA_{\alpha})$ is an equivalence of $\VV$-categories. 
\end{definition}
Notice that the definition above implies for every $\ii$-equivariant morphism $f:\cB_{\beta}\rightarrow \cA_{\alpha}$ there exists an $\ii$-equivariant morphism $\widetilde{f}:\Stab_{\ii }(\cA)_{\underline{\alpha}}\rightarrow \cB_{\beta }$ together with a $2$-isomorphism 
	\begin{equation}\begin{tikzpicture}[scale=0.75]
			\node (s) at (2,0) {$\Stab_{\ii }(\cA)_{\underline{\alpha}}$};
			\node (c) at (0,2) {$\cB_{\beta}$};
			\node (a) at (4,2) {$\cA_{\alpha}$};
			\path[->,font=\scriptsize,>=angle 90]
			(s) edge node[below]  {$\epsilon$} (a)
			(c) edge node[below]  {$\widetilde{f}$} (s)
			(c) edge node[below] (f) {${f}$} (a);
			\draw[-{Implies},double distance=2pt,shorten >=2pt,shorten <=2pt] (s) to node[scale=1] [right] {} (f);
	\end{tikzpicture}\end{equation}
	where $\widetilde{f}$ is unique up to unique $1$-isomorphism.

 Suppose that $\frM$ is weakly powered over $\bfV$ and let the $\VCat$-functor  $\pitchfork:\bfV^{\op}\times \frM\to \frM$ denote the powering.  We  define two $\VV$-functors $\mu _r,\mu _l:\HH\times \HH^{\op}\to \bfV^{\op}$ as follows: For $s$ in $\{l,r\}$ define $\mu_s ((*,*),(*,*))=\ii $ considered as a category and let $\mu_s((x,y^{\op}),(z,w^{\op})^{\op}):\ii\rightarrow \ii $ be the $\VV$-functor given by
\begin{equation}\label{eqn:mu}
\mu_s ((x,y^{\op}),(z,w^{\op})^{\op})({u})=
\left\{
\begin{array}{ll}
  (y{\circledast} {u}){\circledast} w & \text{, if }s=r \\
  z{\circledast} ({u}{\circledast}  x) & \text{, if }s=l 
\end{array}
\right.\end{equation}
Let $\alpha:\HH\to \frM$ be an $\ii$-action on a object $\cA$ in $\frM$. Define $\Theta_{s,\alpha}$ as the composition 
\begin{equation}\label{eqn:theta}
\HH \times \HH \stackrel{id\times \psi}\rightarrow
\HH \times {\HH}^{\op}\times {\HH} \stackrel{\mu_s\times\alpha}\longrightarrow
\bfV^{\op}\times \frM\stackrel{\pitchfork}\longrightarrow \frM .
\end{equation}
Define an $\ii$-action $\Inv_{s}( {\alpha}):\HH\rightarrow\frM$ as
\begin{equation}\label{eqn:inv}
\Inv_{s}({\alpha})(-)={\oint}_{x:\mathbf{B}\ii^{\op}} \Theta_{s,\alpha}(-,(x,x))
\end{equation}
We use the notation $\Inv_{s}(\cA_{\alpha})$ for $\Inv_{s}({\alpha})(*,*)$.

 Let $\ddot{\mu}_l=\mu_{l}(({\unit},{\unit}^{\op}),-)$ and $\ddot{\mu}_r=\mu_{l}(-,({\unit},{\unit}^{\op})^{\op})$. Let $s,s' \in \{l,r\}$ with $s\neq s'$.
Note that $\Theta_{s,\alpha}(*,*,-)$ is equal to the composition given by
\[\HH  \stackrel{\psi}\rightarrow
{\HH}^{\op}\times {\HH} \stackrel{\ddot{\mu}_{s'}\times  \alpha}\longrightarrow
\bfV^{\op}\times \frM\stackrel{\pitchfork}\longrightarrow \frM .\]
which we denote by $\pitchfork(\ddot{\mu}_{s'},\alpha)$. Thus,  $\Inv_{s}( \cA_{\alpha})$ is isomorphic to 
\begin{equation}\label{eqn:inv(A)} {\oint}_{x:\mathbf{B}\ii^{\op}}\pitchfork(\ddot{\mu}_{s'},\alpha)({x,x})
\end{equation}
Observe that if $\frM=\CAT$, then $\Inv_{s}( \cA_{\alpha})\cong \Fun_{\ii}(\ii_{\ddot{\mu}_{s'}},\cA_{\alpha}).$

The inclusion ${\unit}\rightarrow \ii$ induces an $\ii$-equivariant morphism 
\[\epsilon_s:\Inv_{s}({\cA}_{\alpha})\rightarrow {\cA}_{\alpha}\] called the \emph{evaluation at ${\unit}$}, given by the composition of  $\omega_s:\Inv_{s}({\cA}_{\alpha})\to\pitchfork(\ii,\cA)$, the universal wedge of the weak end, and $\varepsilon:\ \pitchfork(\ii,\cA)\to\ \pitchfork({\unit},\cA)= \cA $, the usual evaluation.  We can define $$\sigma:[\Inv_{s}( \cA_{\alpha}), \alpha]({\unit},u^{\op})(\epsilon_s)\to [\Inv_{s}(\cA_{\alpha}), \alpha](u,{\unit}^{\op})(\epsilon_s)$$
as follows: Let $\tilde{\mu}_l=\mu_{l}(-,({\unit},{\unit}^{\op}))$ and $\tilde{\mu}_r=\mu_{l}(({\unit},{\unit}^{\op})^{\op},-)$. For every $u$ in $\ii$, we have a map
\[ \varepsilon \circ \pitchfork(\ddot{\mu}_{s'},\alpha)({\unit},{u}^{\op})\circ\omega_s \rightarrow  \varepsilon \circ \pitchfork(\ddot{\mu}_{s'},\alpha)({u},{\unit}^{\op}) \circ\omega_s \]
By naturality of $\varepsilon$, we obtain
\[ \alpha({\unit},{u}^{\op})\circ \varepsilon \circ \pitchfork(\ddot{\mu}_{s'},\id_A)({\unit},{u}^{\op})\circ\omega_s \rightarrow \alpha({u},{\unit}^{\op})\circ  \varepsilon \circ \pitchfork(\ddot{\mu}_{s'},\id_A)({u},{\unit}^{\op}) \circ\omega_s \]
Notice that 
$\varepsilon \circ \pitchfork(\tilde{\mu}_{s'},\id_A)({u},{\unit}^{\op})=\varepsilon \circ \pitchfork(\ddot{\mu}_{s'},\id_A)({\unit},{u}^{\op})$ and   $\varepsilon\circ\pitchfork(\tilde{\mu}_{s'},\id_A)({\unit},{u}^{\op})=\varepsilon \circ \pitchfork(\ddot{\mu}_{s'},\id_A)({u},{\unit}^{\op})$. 
Then  we obtain
\[ \alpha({\unit},{u}^{\op})\circ \varepsilon \circ \pitchfork(\tilde{\mu}_{s'},\id_A)({u},{\unit}^{\op})\circ\omega_s \rightarrow \alpha({u},{\unit}^{\op})\circ  \varepsilon\circ\pitchfork(\tilde{\mu}_{s'},\id_A)({\unit},{u}^{\op}) \circ\omega_s \]
By Fubini theorem, we get 
 \[ \alpha({\unit},{u}^{\op})\circ \varepsilon\circ\omega_s \circ \Inv_{s}({\alpha})({u},{\unit}^{\op}) \rightarrow \alpha({u},{\unit}^{\op})\circ  \varepsilon\circ\omega_s \circ \Inv_{s}( {\alpha})({\unit},{u}^{\op}) \]
We define $\sigma$ as 
\[\sigma({u}):\alpha({\unit},{u}^{\op})\circ \epsilon_s \circ \Inv_{s}({\alpha})({u},{\unit}^{\op}) \rightarrow \alpha({u},{\unit}^{\op})\circ  \epsilon_s \circ \Inv_{s}( {\alpha})({\unit},{u}^{\op}).\]

 For the rest of the subsection we assume $\Inv_{s}(\alpha)$ exists for every $\ii$-action $\alpha$. 
\begin{theorem}\label{thm:stableiffepsilonis1equivalence}
Let $\alpha $ be an right (respectively left) $\ii $-action on a object $\cA$ in a $\VCat$-category $\frM$. Assume $\Inv_{s}(\alpha)$ is defined. Then $\cA_\alpha $ is stable  if and only if $\epsilon_l$ (respectively $\epsilon_r$) is a  $1$-equivalence in $\ii\frM$.
\end{theorem}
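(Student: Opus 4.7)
The plan is to prove both implications from one structural observation: for any $\ii$-action $\alpha$ the resulting $\ii$-action $\Inv_l(\alpha)$ on $\Inv_l(\cA_\alpha)$ is mute on the right. I would verify this by tracing through the definition of $\Theta_{l,\alpha}$ in \eqref{eqn:theta}: the remaining $\HH$-variable feeds into the first slot of $\mu_l$, and the explicit formula $\mu_l((y_1,y_2^{\op}),(z,w^{\op})^{\op})(u) = z \circledast (u \circledast y_1)$ from \eqref{eqn:mu} shows the dependence is only on $y_1$, the ``left part,'' so the $\ii$-action produced is mute on the right. Consequently $\Inv_l(\cA_\alpha)$ naturally lies in the ``left-action'' half of $\ii\frM$, and $\epsilon_l$ becomes an $\ii$-equivariant morphism from this left-action object to the right-action object $\cA_\alpha$.

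The implication ``$\epsilon_l$ is a $1$-equivalence $\Rightarrow \cA_\alpha$ is stable'' is then essentially immediate: if $\epsilon_l$ is a $1$-equivalence in $\ii\frM$, then the right-action $\cA_\alpha$ is $1$-equivalent to $\Inv_l(\cA_\alpha)$ whose action is mute on the opposite side, which is exactly the condition in Definition \ref{def:stable0cell}.

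For the converse, my plan is to first establish the following key lemma: for any left $\ii$-action $\beta$ on an object $\cB$, the evaluation $\epsilon_l \colon \Inv_l(\cB_\beta) \to \cB_\beta$ is already a $1$-equivalence. Granting this, the converse follows formally: if $\cA_\alpha$ is stable, pick a left-action object $\cB_\beta$ together with a $1$-equivalence $\phi \colon \cB_\beta \to \cA_\alpha$ in $\ii\frM$. Applying the $\VCat$-functor $\Inv_l$, which is functorial on equivariant morphisms by the universal property of the weak end, yields a $1$-equivalence $\Inv_l(\phi) \colon \Inv_l(\cB_\beta) \to \Inv_l(\cA_\alpha)$, and naturality of $\epsilon_l$ produces an invertible $2$-cell $\phi \circ \epsilon_l \cong \epsilon_l \circ \Inv_l(\phi)$. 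Chasing this square exhibits $\epsilon_l \colon \Inv_l(\cA_\alpha) \to \cA_\alpha$ as a composite of $1$-equivalences.

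The main obstacle will be the key lemma itself. Using the explicit description of the weak end as pairs $(\omega, \sigma)$ satisfying \eqref{eqn:naturalityofsigma}, \eqref{diag:triangleofend}, and \eqref{eqn:sigma1}, the hypothesis $\beta(\unit, u^{\op}) = \id_\cB$ collapses each $\sigma(u)$ into an isomorphism $\omega \Rightarrow \beta(u,\unit^{\op}) \circ \omega$, and the cocycle diagram becomes compatibility of these isomorphisms with the monoidal structure of $\ii$. I would construct an essential inverse $\cB_\beta \to \Inv_l(\cB_\beta)$ sending an object $a$ to the ``trivial'' wedge $(a, \id)$, and then verify via the universal property of the weak end that this yields an adjoint equivalence of $\VV$-categories. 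The bookkeeping challenge is matching the $\VCat$-enrichment on both sides and tracking the cocycle data against $\mu_l$ coherently.
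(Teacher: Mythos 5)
Your proposal is correct and follows essentially the same route as the paper: the ``if'' direction is read off from Definition \ref{def:stable0cell} once one observes that $\Inv_{l}(\cA_{\alpha})$ is mute on the right, and the ``only if'' direction is exactly the paper's argument, namely your key lemma (which the paper dispatches with the one-line remark that $\pitchfork$ is a powering), followed by applying $\Inv_{l}$ to the $1$-equivalence $\cB_{\beta}\to\cA_{\alpha}$ and invoking naturality of the evaluation. The only caveat is in your sketch of the key lemma: the essential inverse to $\epsilon_{l}$ should send $a$ to the equivariant functor built from the action, $u\mapsto\beta(u,{\unit}^{\op})(a)$ with identity structure data, rather than a literally constant ``trivial wedge'' $(a,\id)$, but this is bookkeeping within the same approach, not a different argument.
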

\begin{proof}
The ``if" part follows from the definition of being stable. Now for the ``only if" part, assume that  $\cA_\alpha $ is stable. Without loss of generality assume $\alpha $ is a right action. Then $\cA_\alpha $ is  $1$-equivalent to $\cB_\beta $ for some left $\ii$-action $\beta $. Since $\pitchfork$ is a powering, $\epsilon _l:\Inv_{l }(\cB_\beta)\to \cB_\beta $ is a  $1$-equivalence. Also notice that the  $1$-equivalence from  $\cA_\alpha $ to $\cB_\beta $ induces a  $1$-equivalence from   $\Inv_{l }(\cA_{\alpha})$ and  $\Inv_{l }(\cB_\beta)$. Hence by naturality of the evaluation,   $\epsilon _l:\Inv_{l }(\cA_{\alpha})\to \cA_\alpha $ is a  $1$-equivalence.
\end{proof}

Given any object $\cA_{\alpha}$  in $\ii\frM$ let   \[\Inv_{\ii}(\cA_{\alpha})=\Inv_{l}(\Inv_{r}(\cA_{\alpha}))\] and  
$\epsilon = \epsilon_{r} \circ  \epsilon_{l}.$ Let $\Inv^{\unit}_{\ii }(\cA_{\alpha})= \Inv_{l}(\cA_{\alpha})$ and $\Inv^{n }_{\ii }(\cA_{\alpha})= \Inv_{\ii }( \Inv^{n-1 }_{\ii }(\cA_{\alpha}))$ for every $n>1$.  Define $\Inv_{\ii}^{\infty}(\cA_{\alpha })$ as the weak limit of the diagram
\[\cdots\longrightarrow  \Inv^{3}_{\ii }(\cA_{\alpha}) \overset{\epsilon}\longrightarrow  \Inv^{2}_{\ii }(\cA_{\alpha}) \overset{\epsilon}{\longrightarrow} \Inv^{\unit}_{\ii}(\cA_{\alpha}),\] in the $\VCat$-category $[\HH,\frM]$.  Note that every object in the above diagram is mute on the right, so that the object is isomorphic to an object in  $[\mathbf{B}\ii,\frM] $ (which is a subcategory of $\ii \frM$) under the inclusion induced by the functor $\HH \to \mathbf{B}\ii:  ({u},{v}^{\op})\mapsto u$.

Let $\epsilon^\infty: \Inv^{\infty }_{\ii }(\cA_{\alpha})\to \cA_\alpha$ be the $\epsilon_l$ composed with the  compositions of the maps in the diagram.

\begin{theorem}\label{thm:invinftyisstab}
Assume that $\Inv^{\infty }_{\ii }(\cA_{\alpha})$ exists. Then $\Inv^{\infty }_{\ii }(\cA_{\alpha})$ together with $\epsilon^\infty: \Inv^{\infty }_{\ii }(\cA_{\alpha})\to \cA_\alpha$  is a stabilization of $\cA_{\alpha }$.
\end{theorem}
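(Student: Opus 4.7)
The plan is to verify both the stability of $\Inv^\infty_\ii(\cA_\alpha)$ and the universal mapping property required by Definition \ref{def:stabilization}, using Theorem \ref{thm:stableiffepsilonis1equivalence} as the main diagnostic for stability.

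For stability, I would exploit that $\Inv_\ii$, being pointwise a weak end in $\frM$, commutes with weak limits in the codomain and hence preserves the weak limit defining $\Inv^\infty_\ii(\cA_\alpha)$. Applying it to the defining tower yields
\[
\Inv_\ii(\Inv^\infty_\ii(\cA_\alpha)) \simeq \lim_n \Inv_\ii(\Inv^n_\ii(\cA_\alpha)) = \lim_n \Inv^{n+1}_\ii(\cA_\alpha).
\]
The shifted tower is cofinal in the original one, so its weak limit is again $\Inv^\infty_\ii(\cA_\alpha)$, and a diagram chase identifies the canonical comparison map with $\epsilon: \Inv_\ii(\Inv^\infty_\ii(\cA_\alpha)) \to \Inv^\infty_\ii(\cA_\alpha)$. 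Hence $\epsilon$ is a $1$-equivalence. Since $\epsilon = \epsilon_r \circ \epsilon_l$ and $\Inv^\infty_\ii(\cA_\alpha)$ is mute on the right (as noted just before the theorem), a two-out-of-three argument together with functoriality of $\Inv_r$ on $1$-equivalences shows that the relevant evaluation is itself a $1$-equivalence, after which Theorem \ref{thm:stableiffepsilonis1equivalence} yields stability.

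For the universal property, fix a stable object $\cB_\beta$. Since $\Fun_\ii(\cB_\beta, -)$ is defined as a weak end in its codomain, it commutes with weak limits in that variable, giving
\[
\Fun_\ii(\cB_\beta, \Inv^\infty_\ii(\cA_\alpha)) \simeq \lim_n \Fun_\ii(\cB_\beta, \Inv^n_\ii(\cA_\alpha)).
\]
It therefore suffices to show that each map $\epsilon_*: \Fun_\ii(\cB_\beta, \Inv_\ii(\cat{X})) \to \Fun_\ii(\cB_\beta, \cat{X})$ is a $\VV$-equivalence whenever $\cB_\beta$ is stable and $\cat{X}$ is any $\ii$-object. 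The key identity I would establish is an adjunction-type equivalence
\[
\Fun_\ii(\cB_\beta, \Inv_s(\cat{X})) \simeq \Fun_\ii(\Inv_{s'}(\cB_\beta), \cat{X}),
\]
obtained by unfolding both sides as iterated weak ends via (\ref{eqn:inv(A)}), applying the Fubini theorem, the powering adjunction for $\pitchfork$ over $\bfV$, and the symmetry that swaps $\mu_s$ with $\mu_{s'}$. Since $\cB_\beta$ is stable, Theorem \ref{thm:stableiffepsilonis1equivalence} gives that $\epsilon_{s'}: \Inv_{s'}(\cB_\beta) \to \cB_\beta$ is a $1$-equivalence, so the right-hand side is equivalent to $\Fun_\ii(\cB_\beta, \cat{X})$. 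Unwinding the composite recovers $\epsilon_*$, proving it is a $\VV$-equivalence.

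The hardest step will be the Fubini-plus-powering calculation establishing the adjunction $\Fun_\ii(\cB_\beta, \Inv_s(-)) \simeq \Fun_\ii(\Inv_{s'}(\cB_\beta), -)$, since one must track the $\VCat$-enrichment, the pseudo versus lax distinctions from Section \ref{ssec:enrichedlaxpseudo}, and the left/right muteness conditions of $\beta$ and $\cat{X}$ in parallel. Once this identity is verified at the level of $\VCat$-categories, the remaining book-keeping comparing $\epsilon^\infty_*$ with the composite equivalence is formal.
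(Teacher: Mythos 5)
Your proposal has two genuine problems. First, in the stability step your deduction is logically incomplete: knowing that the composite $\epsilon=\epsilon_{r}\circ\epsilon_{l}\colon \Inv_{\ii}(\Inv^{\infty}_{\ii}(\cA_{\alpha}))\to \Inv^{\infty}_{\ii}(\cA_{\alpha})$ is a $1$-equivalence does not, by any two-out-of-three argument, tell you that the individual evaluation $\epsilon_{r}$ (the one Theorem \ref{thm:stableiffepsilonis1equivalence} asks about for a right-mute object) is a $1$-equivalence; a composite can be invertible while neither factor is. The way to close this is to do what the paper does: use that $\Inv_{r}$ commutes with the weak limit, so $\Inv_{r}(\Inv^{\infty}_{\ii}(\cA_{\alpha}))\simeq \lim_{n}\Inv_{r}(\Inv^{n}_{\ii}(\cA_{\alpha}))$, and then interleave this tower with the original one via the maps $\epsilon_{r}\colon \Inv_{r}(\Inv^{n}_{\ii}(\cA_{\alpha}))\to \Inv^{n}_{\ii}(\cA_{\alpha})$ and $\epsilon_{l}\colon \Inv^{n+1}_{\ii}(\cA_{\alpha})\to \Inv_{r}(\Inv^{n}_{\ii}(\cA_{\alpha}))$ to produce mutually inverse morphisms $F$ and $G$ exhibiting a $1$-equivalence between $\Inv^{\infty}_{\ii}(\cA_{\alpha})$ (mute on the right) and $\Inv_{r}(\Inv^{\infty}_{\ii}(\cA_{\alpha}))$ (mute on the left); this verifies Definition \ref{def:stable0cell} directly, with no appeal to Theorem \ref{thm:stableiffepsilonis1equivalence} and no factorization of $\epsilon$.

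Second, your key identity $\Fun_{\ii}(\cB_{\beta},\Inv_{s}(\cat{X}))\simeq \Fun_{\ii}(\Inv_{s'}(\cB_{\beta}),\cat{X})$ has the wrong variance: the powering adjunction turns powers in the \emph{target} of a hom into \emph{copowers} in the source, so the Fubini-plus-$\pitchfork$ manipulation you describe yields $\Fun_{\ii}(\cB_{\beta},\Inv_{s}(\cat{X}))\simeq \Fun_{\ii}(\coInv_{s'}(\cB_{\beta}),\cat{X})$, a weak coend of copowers on the left, not $\Inv_{s'}(\cB_{\beta})$. To finish along your route you would then need Theorem \ref{thm:costableiffepsilonis1equivalence} (that $\eta_{s'}\colon\cB_{\beta}\to\coInv_{s'}(\cB_{\beta})$ is a $1$-equivalence for stable $\cB_{\beta}$), and, more seriously, you would need $\frM$ to be weakly copowered over $\bfV$ with the relevant weak coends existing --- hypotheses the theorem does not grant (it only assumes the $\Inv_{s}$'s and the tower limit exist). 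The paper's proof of the universal property avoids all of this: for stable $\cB_{\beta}$ it produces an $\ii$-equivariant $1$-isomorphism $\cB_{\beta}\to\Inv^{\infty}_{\ii}(\cB_{\beta})$, and for any equivariant $f\colon\cB_{\beta}\to\cA_{\alpha}$ defines $\widetilde{f}$ as $\Inv^{\infty}_{\ii}(f)$ precomposed with that isomorphism, checking $\epsilon^{\infty}\circ\widetilde{f}\cong f$; so the equivalence $\epsilon^{\infty}_{*}$ comes from functoriality of $\Inv^{\infty}_{\ii}$ rather than from an adjunction between $\Inv$ in the target and a (co)stabilization in the source. If you want to keep your reduction through $\lim_{n}\Fun_{\ii}(\cB_{\beta},\Inv^{n}_{\ii}(\cA_{\alpha}))$, you must either add the copower hypotheses or replace the adjunction step by this functoriality argument.
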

\begin{proof}
 Since $\Inv_r$ is given by an weak end of a weak powering, it commutes with weak limits. Therefore,   $\Inv_{r}(\Inv^{\infty }_{\ii }(\cA_{\alpha}))$ is isomorphic to the weak limit $\lim_{n}\Inv_{r}(\Inv^{n}_{\ii}(\cA_{\alpha}))$. For each $n\geq 1$, we have  morphisms $\epsilon_{r}:\Inv_{r}(\Inv^{n}_{\ii}(\cA_{\alpha}))\to \Inv^{n}_{\ii}(\cA_{\alpha})$, so that there is a universal  morphism  $$F:\Inv_{r}(\Inv^{\infty }_{\ii }(\cA_{\alpha})) \to \Inv^{\infty }_{\ii }(\cA_{\alpha})$$ and we have  morphisms $\epsilon_l:\Inv^{n+1}_{\ii}(\cA_{\alpha})\to \Inv_{r}(\Inv^{n}_{\ii}(\cA_{\alpha}))$, so that there is a universal  morphism  $$G:\Inv^{\infty }_{\ii }(\cA_{\alpha}) \to \Inv_{r}(\Inv^{\infty }_{\ii }(\cA_{\alpha})).$$
 Hence, $\Inv^{\infty }_{\ii }(\cA_{\alpha})$ is stable. 

Given any stable $\cB_\beta$ in $\ii\frM$ we have an $\ii$-equivariant $1$-isomorphism  $\cB_\beta\to \Inv_{\ii}(\cB_{\alpha})$, so that $\Inv_{\ii}(\cB_{\alpha})$ is a also stable. Inductively, we obtain $\ii$-equivariant $1$-isomorphisms $\cB_\beta\to \Inv^{n}_{\ii}(\cB_{\alpha})$ for all $n\geq 1$. Thus,  there is a $1$-isomorphisms $\cB_\beta\to \Inv^{\infty}_{\ii}(\cB_{\alpha})$. Then, for any given $\ii$-equivariant morphism $f:B\to A$, there exist a unique $\tilde{f}:B \to \Inv^{\infty}_{\ii}(\cA_{\alpha})$ given by the composition 
$$\tilde{f}:B \overset{f}\lto \Inv^{\infty}_{\ii}(\cB_{\alpha})\overset{\Inv^{\infty}_{\ii}(f)}\lto \Inv^{\infty}_{\ii}(\cA_{\alpha})$$ such that $\epsilon^\infty\circ \tilde{f}$ is naturally isomorphic to $f$.  This implies that $$\epsilon^\infty_*:\Fun_{\ii }(\cB_{\beta },\Inv^{\infty}_{\ii}(\cA)_{\underline{\alpha}})\to \Fun_{\ii }(\cB_{\beta },\cA_{\alpha})$$ is an equivalence of categories. 
\end{proof}

In the case when $\ii $ is a symmetric monoidal category, we can consider $\Inv_{\ii }$ as the stabilization due to the following theorem.
\begin{theorem}\label{thm:vvissymmetricimpliesinvisstab}
	Assume that $\ii $ is a symmetric monoidal category. Then $\Inv_{\ii }(\cA_{\alpha})$ is equivalent to $\Inv^{\infty }_{\ii }(\cA_{\alpha})$.
\end{theorem}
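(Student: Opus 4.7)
The plan is to show that, when $\ii$ is symmetric monoidal, the braiding makes $\Inv_l$ and $\Inv_r$ naturally equivalent as constructions; consequently $\Inv_\ii(\cA_\alpha)$ is already stable and the tower of iterated inversions defining $\Inv^\infty_\ii(\cA_\alpha)$ becomes essentially constant.

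First, I would use the braiding $u \circledast v \cong v \circledast u$ of $\ii$ to produce a natural isomorphism between the $\VV$-functors $\ddot{\mu}_l$ and $\ddot{\mu}_r$ appearing in \ref{eqn:mu}. Passing through the weak powering $\pitchfork$ and the formation of the weak end in \ref{eqn:inv(A)}, this yields a natural $1$-equivalence $\Inv_l(\cA_\alpha) \simeq \Inv_r(\cA_\alpha)$ in $\ii\frM$, together with a compatible equivalence of the corresponding induced $\ii$-actions.

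Next, I would verify that $\Inv_r(\cA_\alpha)$ is stable. By construction, $\Inv_r$ universally inverts the action from one side; the braiding of $\ii$ transports this inversion to the other side as well, so that every $u \in \ii$ acts on $\Inv_r(\cA_\alpha)$ by an auto-equivalence. Hence $\Inv_r(\cA_\alpha)$ is stable, and by the equivalence from the first step so is $\Inv_l(\cA_\alpha) = \Inv^1_\ii(\cA_\alpha)$. Applying Theorem \ref{thm:stableiffepsilonis1equivalence}, $\Inv_l$ and $\Inv_r$ applied to a stable object return $1$-equivalent stable objects, so $\Inv_\ii(\cA_\alpha) = \Inv_l(\Inv_r(\cA_\alpha)) \simeq \Inv_r(\cA_\alpha) \simeq \Inv^1_\ii(\cA_\alpha)$. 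By induction every $\Inv^n_\ii(\cA_\alpha)$ is stable and each connecting map $\Inv^{n+1}_\ii(\cA_\alpha) \to \Inv^n_\ii(\cA_\alpha)$ is a $1$-equivalence.

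Finally, the weak limit of a tower of $1$-equivalences is $1$-equivalent to any of its terms, giving $\Inv^\infty_\ii(\cA_\alpha) \simeq \Inv^1_\ii(\cA_\alpha) \simeq \Inv_\ii(\cA_\alpha)$. The main obstacle I expect is the second step: carefully tracking how the braiding of $\ii$ induces compatible $2$-cells on the weak end $\Inv_r(\cA_\alpha)$ satisfying the naturality condition \ref{eqn:naturalityofsigma}, the coherence diagram \ref{diag:triangleofend}, and the unit relation \ref{eqn:sigma1}. This is where the symmetric monoidal coherence is essential, since it is precisely what promotes the one-sided inversion performed by the weak end construction into a genuine two-sided inversion in the $\VCat$-enriched setting.
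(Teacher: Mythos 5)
Your route is essentially correct, but it is genuinely different from the paper's. The paper does not argue through stability of a single application of $\Inv$: it works directly with the two--variable weak end $\Inv_{\ii}(\cA_{\alpha})=\Inv_{l}(\Inv_{r}(\cA_{\alpha}))$ and writes down one explicit comparison functor $E$, induced by the reindexing $(u,v,w)\mapsto(v,u\circledast w)$, i.e.\ $E(f)(u)(v)(w)=f(v)(u\circledast w)$, which it checks is inverse (up to isomorphism) to the evaluation $\epsilon_r$; this exhibits $\Inv_{\ii}(\cA_{\alpha})$ as stable, and the identification with $\Inv^{\infty}_{\ii}(\cA_{\alpha})$ then follows as in Theorem \ref{thm:invinftyisstab}. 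You instead prove the stronger intermediate claim that already one application $\Inv_{r}(\cA_{\alpha})$ (hence $\Inv^{\unit}_{\ii}(\cA_{\alpha})=\Inv_{l}(\cA_{\alpha})$) is stable when $\ii$ is symmetric, then use Theorem \ref{thm:stableiffepsilonis1equivalence} to make every connecting map of the tower a $1$-equivalence and identify the weak limit with its bottom term. Your stronger claim is true and is in fact what the paper itself uses later (e.g.\ where it asserts $\Stab_{\ii}\simeq\Inv_{l}$ for one-sided actions), so your argument buys a cleaner conceptual statement at the price of more coherence bookkeeping. Two cautions. First, ``every $u$ acts by an auto-equivalence'' is not Definition \ref{def:stable0cell}: stability demands an $\ii$-equivariant $1$-equivalence in $\ii\frM$ to an object whose action is mute on the \emph{other} side, so you must actually produce the equivariance $2$-cells (for instance, take the identity on $\Inv_{r}(\cA_{\alpha})$ and use the structure isomorphisms $\sigma$ of objects of the weak end, corrected by the braiding, as the data $\tau$, then verify \ref{eqn:naturalityofsigma}, \ref{diag:triangleofend}, \ref{eqn:sigma1}) --- this is exactly the content of the paper's explicit formula for $E$, and your step 1 (an equivalence $\Inv_{l}(\cA_{\alpha})\simeq\Inv_{r}(\cA_{\alpha})$ \emph{in} $\ii\frM$) is not a formal consequence of the isomorphism $\ddot{\mu}_l\cong\ddot{\mu}_r$ of weights alone, since the two sides carry actions mute on opposite sides; it is the same verification as your step 2, not a separate easy step. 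Second, keep the handedness straight when invoking Theorem \ref{thm:stableiffepsilonis1equivalence}: $\Inv_{r}(\cA_{\alpha})$ carries a right action, so its stability gives that $\epsilon_l\colon\Inv_{l}(\Inv_{r}(\cA_{\alpha}))\to\Inv_{r}(\cA_{\alpha})$ is a $1$-equivalence, which is the one you need for $\Inv_{\ii}(\cA_{\alpha})\simeq\Inv_{r}(\cA_{\alpha})$. With those verifications supplied, your argument is a valid alternative proof of Theorem \ref{thm:vvissymmetricimpliesinvisstab}.
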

\begin{proof} 
 Let $E:\Inv_{\ii }(\cA_{\alpha})\to \Inv_{l } (\Inv_{\ii }(\cA_{\alpha}))$ be the functor induced by 
$$\pi_2\times \mu\circ(\pi_1\times\pi_3):\ii\times \ii\times \ii\to \ii\times \ii:(u,v,w)\mapsto (v,u{\circledast} w).$$  It is straightforward to check that $E$ with $\epsilon_r:\Inv_{r}(\Inv_{\ii }(\cA_{\alpha}))\to \Inv_{\ii }(\cA_{\alpha})$ is a $1$-equivalence. 
\end{proof}

If $\frM=\CAT$, then the objects in this limit can be viewed as follows: 
The objects of $\Inv_{\ii}^{\infty}(\cA_{\alpha })$ are sequences of pairs  $\{(f_n  , {\phi_n} )\}_{n\in \bbN}$ where $f_n\in \Inv^{n}_{\ii }(\cA_{\alpha})$  and $\phi_{n}:f_{n+1}({\unit})({\unit})\to f_{n}$ is an isomorphism in $\Inv^{n}_{\ii }(\cA_{\alpha})$. A morphism $\zeta$ between two objects $\{({f_n},{\phi_{n}})\}$ and $\{({g_n},\varphi_{n})\}$ is a set of arrows ${\zeta_n:f_n\to g_n}$ satisfying the evident compatibility conditions. The action on $\Inv_{\ii}^{\infty}(\cA_{\alpha })$ is defined pointwise; that is, $\alpha^\infty({u},{v}^{\op})\{({f_n},{\phi_{n}})\}=\{(\widetilde{\alpha}^n_r({u},{v}^{\op})({f_n}),\widetilde{\alpha}^n_r({u},{v}^{\op})({\phi_{n}}))\}$ where $\widetilde{\alpha}^n_r$ is the action on $\Inv^{n}_{\ii }(\cA_{\alpha})$.

We can also see that	$\Inv^{\infty }_{\ii }(\cA_{\alpha})$ is stable under  the action  by using the object-wise description given above. 
	Define $$\psi:\Inv^{\infty }_{\ii }(\cA_{\alpha})\to\Inv_{r}(\Inv^{\infty }_{\ii }(\cA_{\alpha}))$$ by  
 $$\psi(\{(f_{n}  , {\phi_{n}} )\})({u})=\left\{(f_{n+1}({\unit})({u})  , {\phi_{n+1}}({\unit})({u}))\right\}$$ for any  $\{(f_n  , {\phi_n} )\}$  in $\Inv^{\infty }_{\ii }(\cA_{\alpha})$. 
 Then  $$(\epsilon_l\circ \psi)(\{(f_n  , {\phi_n} )\})=\psi(\{(f_n  , {\phi_n} )\})({\unit})=\left\{(f_{n+1} ({\unit})({\unit})  ,{\phi_{n+1} }({\unit})({\unit}))\right\}\cong \{(f_n  , {\phi_n} )\}$$
 and for any $g:\ii\to \Inv^{\infty }_{\ii }(\cA_{\alpha})$ in $\Inv_{r}(\Inv^{\infty }_{\ii }(\cA_{\alpha}))$ such that $g({u})=\{(f^{g({u})}_n,\phi^{g({u})}_n)\}$. Note that  $$g({u})\cong \widetilde{\alpha}^n_r({u},{v}^{\op})(g({\unit}))= \widetilde{\alpha}^n_r({u},{v}^{\op})(\{(f^{g({\unit})}_n,\phi^{g({\unit})}_n)\})$$ $$\cong \widetilde{\alpha}^n_r({u},{v}^{\op})(\{(f^{g({\unit})}_{n+1}({\unit})({\unit}),\phi^{g({\unit})}_{n+1}({\unit})({\unit}))\})\cong \{(\widetilde{\alpha}^n_r({u},{v}^{\op})(f^{g({\unit})}_{n+1}({\unit})({u})),\phi^{g({\unit})}_{n+1}({\unit})({u}))\}$$
Therefore
	\begin{align*}(\psi\circ \epsilon_l)({g})({u})=(\psi(g({\unit}))({u})&=\psi(\{(f^{g({\unit})}_n,\phi^{g({\unit})}_n)\})({u})
		\\
&=(f^{g({\unit})}_{n+1}({\unit})({u}),\phi^{g({\unit})}_{n+1}({\unit})({u}))\\
&\cong (f^{g({u})}_{n+1}({u})({u}),\phi^{g({u})}_{n+1}({u})({u}))\\
&\cong (f^{g({u})}_{n+1}({\unit})({\unit}),\phi^{g({u})}_{n+1}({\unit})({\unit}))\\
&= (f^{g({u})}_{n}, \phi^{g({u})}_{n})=g({u})\\
	\end{align*} 
Hence by induction $\phi(\epsilon_l(\widetilde{f}))=\widetilde{f}$.

In the case when $\ii $ is a symmetric monoidal category, the inverse equivalence $E:\Inv_{\ii }(\cA_{\alpha})\to \Inv_{l } (\Inv_{\ii }(\cA_{\alpha}))$  of  $\epsilon_r:\Inv_{r}(\Inv_{\ii }(\cA_{\alpha}))\to \Inv_{\ii }(\cA_{\alpha})$ in the proof of Theorem \ref{thm:vvissymmetricimpliesinvisstab} is given by $E(f)({u})({v})({w})=f({v})({u}{\circledast} {w})$. Observe that for every $g$ in $\Inv_{r}(\Inv_{\ii }(\cA_{\alpha}))$,
	\begin{align*}E(\epsilon(g))({u})({v})({w})  &= \epsilon(g)({v})({u}{\circledast} {w})	\\
	&=g({\unit})({v})({u}{\circledast} {w})\\
	&\cong g({u})({u}{\circledast} {v})({u}{\circledast} {w}) \\
	&\cong  g({u})({v}{\circledast} {u})({u}{\circledast} {w})  \\
	&\cong g({u})( {v})({w}). 
\end{align*}

\subsection{Costabilization of $\ii$-objects}
Dualizing the definition of the stabilization we obtain the costabilization.

\begin{definition}\label{def:costabilization}
Let $\cA_{\alpha}$ be a object in $\ii\frM$.		The costabilization of $\cA_{\alpha}$ is a stable object $\coStab_{\ii }(\cA)_{\underline{\alpha}}$, together with an $\ii$-equivariant morphism $\eta:\cA_{\alpha} \to \coStab_{\ii }(\cA)_{\overline{\alpha}}$ in $\ii\frM$ such that for every stable object $\cB_{\beta }$, the induced functor $\eta^*:\Fun_{\ii }(\cA_{\alpha},\cB_{\beta } )\to \Fun_{\ii }(\coStab_{\ii }(\cA)_{\overline{\alpha}},\cB_{\beta })$ is an equivalence of categories. 
\end{definition}
	Similar to the definition of stabilization, the definition above implies for every $\ii$-equivariant morphism $f:\cB_{\beta}\rightarrow \cA_{\alpha}$ in $\ii\frM$ such that $\beta $ is stable, there exists a morphism $\widetilde{f}:\cB_{\beta }\rightarrow \Stab_{\ii }(\cA)_{\underline{\alpha}}$ together with a $2$-isomorphism 
	\begin{equation}\begin{tikzpicture}[scale=0.75]
	\node (co) at (0,0) {$\coStab_{\ii }(\cA)_{\overline{\alpha}}$};
	\node (b) at (5,0) {$\cB_{\beta}$};
	\node (a) at (3,2) {$\cA_{\alpha}$};
	\path[->,font=\scriptsize,>=angle 90]
	(a) edge node[above]  {${f}$} (b)
	(co) edge node[above](f)  {$\widetilde{f}$} (b)
	(a) edge node[above]  {$\eta$} (co);
	\draw[-{Implies},double distance=2pt,shorten >=2pt,shorten <=2pt] (a) to node[scale=1] [right] {} (f);
	\end{tikzpicture}\end{equation}
	where $\widetilde{f}$ is unique up to unique $1$-isomorphism.

 Assume that $\frM$ is copowered over $\bfV$ and let the $\VCat$-functor $\odot:\bfV\times \frM\to \frM$ denote the copowering.  For $\cC$ in $\bfV$ and $\cA$ in $\frM$, we write  $\cC\odot \cA$ for $\odot(\cC,\cA)$. 
 Let $\varDelta:\HH\to \HH\times \HH$ be the diagonal functor. Let $\nu:\HH\times \HH\to \bfV$ be the functor given by 
 \begin{equation}\label{eqn:nu}
 	\nu_s ((x,y^{\op}),(z,w^{\op}))({u})=
 	\left\{
 	\begin{array}{ll}
 		(x{\circledast} {u}){\circledast} w & \text{, if }s=r \\
 		z{\circledast} ({u}{\circledast}  y) & \text{, if }s=l 
 	\end{array}
 	\right.\end{equation}
Given an $\ii$-action $\alpha:\HH\to \frM$ on a object $\cA$, define $\Upsilon^{s,\alpha}$ as the composition 
\begin{equation}\label{eqn:upsilon} 
	\HH \times \HH \stackrel{\id\times\varDelta }\rightarrow	\HH \times {\HH}\times {\HH} \stackrel{(\nu_s, \alpha)}\longrightarrow	\bfV\times \frM\stackrel{\odot}\longrightarrow \frM .\end{equation}
Define an $\ii$-action $\coInv_{s}( {\alpha}):\HH\rightarrow\frM$ as
\begin{equation}\label{eqn:coinv}\coInv_{s}({\alpha})(-)=\oint^{x:\mathbf{B}\ii^{\op}} \Upsilon^{s,\alpha}(-,(x,x))\end{equation}
We write ${\coInv_{r}(\cA_{\alpha})}$ for ${\coInv_{r}({\alpha})}(*,*)$.

Let $\ddot{\nu}_{l}=\nu_l (({\unit},{\unit}^{\op}),-)$ and $\ddot{\nu}_{r}=\nu_l (-,({\unit},{\unit}^{\op}))$. Then, for $s\neq s' $ in $\{l,r\}$, $\Upsilon^{s,\alpha}(*,*,-)$ is naturally isomorphic to the composition
\[\HH  \stackrel{\varDelta}\rightarrow
{\HH}\times {\HH} \stackrel{(\ddot{\nu}_{s'}\times \alpha)}\longrightarrow
\bfV\times \frM\stackrel{\odot}\longrightarrow \frM .\]
which we denote by $\ddot{\nu}_{s'}\odot\alpha$. Thus, $\coInv_{s}(\cA_{\alpha})$ is isomorphic to $$\oint^{x:\mathbf{B}\ii^{\op}}(\ddot{\nu}_{s'}\odot\alpha)({x,x})$$

 In this case,  the inclusion ${\unit}\rightarrow \ii$ induces an $\ii$-equivariant morphism 
\[\eta_s:{\cA}_{\alpha}\rightarrow\coInv_{s}({\cA}_{\alpha}) \]  which we call the \emph{coevaluation at ${\unit}$}. The $\ii$-equivarience of $\eta_s$ is similar to the case of $\epsilon_s$ above.

  For the rest of this subsection we  assume  $\coInv_{s}(\alpha)$ exists for every $\ii$-action $\alpha$. 

\begin{theorem}\label{thm:costableiffepsilonis1equivalence}
	Let $\alpha $ be an left (respectively right) $\ii $-action on a object $\cA$ in a $\VCat$-category $\frM$.  Then $\cA_\alpha $ is stable  if and only if $\eta_r$ (respectively $\eta_l$) is a  $1$-equivalence in $\ii\frM$.
\end{theorem}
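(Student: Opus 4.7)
The plan is to mirror, step by step, the proof of Theorem~\ref{thm:stableiffepsilonis1equivalence}, exchanging the weak powering $\pitchfork$ for the weak copowering $\odot$, ends for coends, the evaluations $\epsilon_s$ for the coevaluations $\eta_s$, and $\Inv_s$ for $\coInv_s$. Without loss of generality I would treat the case in which $\alpha$ is a left action; the right-action case is entirely symmetric and produces $\eta_l$ in place of $\eta_r$.

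For the ``if'' direction, suppose $\eta_r\colon \cA_\alpha\to \coInv_r(\cA_\alpha)$ is a $1$-equivalence in $\ii\frM$. Unwinding the composition defining $\Upsilon^{r,\alpha}$ through $\nu_r$, and using that $\alpha$, as a left action, is mute on the right, one reads off that the induced action $\coInv_r(\alpha)$ on $\coInv_r(\cA_\alpha)$ is mute on the opposite side. Hence $\coInv_r(\cA_\alpha)$ provides the opposite-handedness witness required by Definition~\ref{def:stable0cell}, making $\cA_\alpha$ stable. For the ``only if'' direction, assume $\cA_\alpha$ is stable, so that there is a right $\ii$-action $\beta$ on some $\cB$ with a $1$-equivalence $\cA_\alpha \simeq \cB_\beta$ in $\ii\frM$. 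The dual of the step ``since $\pitchfork$ is a powering, $\epsilon_l\colon \Inv_l(\cB_\beta)\to \cB_\beta$ is a $1$-equivalence'' used in Theorem~\ref{thm:stableiffepsilonis1equivalence} is the claim that, since $\odot$ is a copowering and $\beta$ is already of the appropriate handedness, $\eta_r\colon \cB_\beta\to \coInv_r(\cB_\beta)$ is a $1$-equivalence. The $\VCat$-functoriality of $\coInv_r$ in its input transports $\cA_\alpha \simeq \cB_\beta$ to $\coInv_r(\cA_\alpha)\simeq \coInv_r(\cB_\beta)$, and naturality of the coevaluation $\eta_r$ then forces $\eta_r\colon \cA_\alpha\to \coInv_r(\cA_\alpha)$ to itself be a $1$-equivalence.

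The main obstacle is the step compressed into a single clause in the dual proof: the claim that for a right action $\beta$ the coevaluation $\eta_r\colon \cB_\beta\to \coInv_r(\cB_\beta)$ is a $1$-equivalence. This boils down to unraveling the explicit presentation $\coInv_r(\cB_\beta)\cong \oint^{x:\mathbf{B}\ii^{\op}}(\ddot{\nu}_l\odot \beta)(x,x)$ and observing that the right-handedness of $\beta$ collapses the dependence of the integrand along one slot, so that the universal property of the weak copower identifies the coend with an object canonically $1$-equivalent to $\cB$ and identifies $\eta_r$ with the unit of the copower--coend adjunction. Once this lemma is in hand, both directions of the biconditional follow formally, exactly parallel to the stabilization case.
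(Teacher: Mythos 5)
Your proposal is correct and takes essentially the same route as the paper: the paper's entire proof of Theorem~\ref{thm:costableiffepsilonis1equivalence} is the single remark that it is dual to Theorem~\ref{thm:stableiffepsilonis1equivalence}, and what you have written is exactly that dualization spelled out, with $\odot$ replacing $\pitchfork$, coends replacing ends, $\eta_s$ replacing $\epsilon_s$, and the key step (that the coevaluation on the opposite-handed witness $\cB_\beta$ is a $1$-equivalence because $\odot$ is a copowering, combined with functoriality of $\coInv$ and naturality of $\eta$) mirroring the corresponding clause in the paper's proof of Theorem~\ref{thm:stableiffepsilonis1equivalence}. In fact you supply slightly more detail than the paper does, so no further comparison is needed.
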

\begin{proof}
This is dual to Theorem \ref{thm:stableiffepsilonis1equivalence}.
\end{proof}

Given any object $\cA_{\alpha}$  in $\ii\frM$ let   $\eta = \eta_l \circ \eta_{r}$. Let $\coInv^{\unit}_{\ii }(\cA_{\alpha})= \coInv_{l}(\cA_{\alpha})$ and $\coInv^{n }_{\ii }(\cA_{\alpha})= \coInv_{\ii }( \coInv^{n-1 }_{\ii }(\cA_{\alpha}))$ for every $n>1$.  Define $\coInv_{\ii}^{\infty}(\cA_{\alpha })$ as the colimit of sequence
\[\coInv^{\unit}_{\ii }(\cA_{\alpha})\overset{\eta }{\longrightarrow}\coInv^{2}_{\ii }(\cA_{\alpha}) \overset{ \eta }{\longrightarrow}\coInv^{3}_{\ii }(\cA_{\alpha}) \longrightarrow\cdots,\] provided that it exists.
Let $\eta^\infty: \cA_\alpha\to \coInv^{\infty }_{\ii }(\cA_{\alpha})$ be $\eta_l$ composed with the transfinite compositions of the maps in the diagram.
\begin{theorem}\label{thm:coinvinftyiscostab}
		Assume that $\coInv^{\infty }_{\ii }(\cA_{\alpha})$ exist. Then  $\coInv^{\infty }_{\ii }(\cA_{\alpha})$  with $\eta^\infty: \cA_\alpha\to \coInv^{\infty }_{\ii }(\cA_{\alpha}) $ is a costabilization of $\cA_{\alpha }$.
\end{theorem}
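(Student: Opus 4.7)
My approach is to mirror the proof of Theorem \ref{thm:invinftyisstab} step by step by dualizing every construction: weak limits become weak colimits, weak powerings become weak copowerings, and the structure morphisms $\epsilon_s$ are replaced by the coevaluations $\eta_s$.

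First, I would show that $\coInv^{\infty}_{\ii}(\cA_{\alpha})$ is stable. Since $\coInv_l$ is given by a weak coend of the weak copowering $\odot$, it commutes with weak colimits in the $\frM$-variable. Consequently $\coInv_l(\coInv^{\infty}_{\ii}(\cA_{\alpha}))$ is isomorphic to the sequential weak colimit $\colim_n \coInv_l(\coInv^{n}_{\ii}(\cA_{\alpha}))$. For each $n \geq 1$, the coevaluations $\eta_l \colon \coInv^{n}_{\ii}(\cA_{\alpha}) \to \coInv_l(\coInv^{n}_{\ii}(\cA_{\alpha}))$ assemble into a universal morphism
\[G \colon \coInv^{\infty}_{\ii}(\cA_{\alpha}) \to \coInv_l(\coInv^{\infty}_{\ii}(\cA_{\alpha})),\]
while the structural maps $\eta_r \colon \coInv^{n}_{\ii}(\cA_{\alpha}) \to \coInv^{n+1}_{\ii}(\cA_{\alpha})$ factored through the levelwise $\coInv_l$ give a universal morphism $F$ in the opposite direction. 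Verifying that $F$ and $G$ are mutually inverse up to $2$-isomorphism then establishes stability of $\coInv^{\infty}_{\ii}(\cA_{\alpha})$ in $\ii\frM$.

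Next, I would establish the universal property. By the dual of Theorem \ref{thm:stableiffepsilonis1equivalence}, for any stable $\cB_\beta$ the coevaluation $\eta_l \colon \cB_\beta \to \coInv_l(\cB_\beta)$ is a $1$-equivalence, and by induction one obtains $\ii$-equivariant $1$-isomorphisms $\cB_\beta \to \coInv^{n}_{\ii}(\cB_\beta)$ for every $n \geq 1$; passing to the colimit produces a $1$-isomorphism $\cB_\beta \to \coInv^{\infty}_{\ii}(\cB_\beta)$, so in particular $\coInv^{\infty}_{\ii}$ preserves stability. For any $\ii$-equivariant morphism $f \colon \cA_\alpha \to \cB_\beta$ define $\widetilde{f} \colon \coInv^{\infty}_{\ii}(\cA_{\alpha}) \to \cB_\beta$ as the composition
\[\coInv^{\infty}_{\ii}(\cA_{\alpha}) \overset{\coInv^{\infty}_{\ii}(f)}{\longrightarrow} \coInv^{\infty}_{\ii}(\cB_{\beta}) \overset{\sim}{\longrightarrow} \cB_\beta.\]
The identity $\widetilde{f} \circ \eta^{\infty} \cong f$ follows from naturality of $\eta_l$ together with the definition of $\eta^{\infty}$ as $\eta_l$ composed with the transfinite compositions, while uniqueness up to unique $1$-isomorphism follows from the universal property of the sequential weak colimit. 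This yields the equivalence of $\VV$-categories
\[\eta^{\infty\,*} \colon \Fun_{\ii}(\coInv^{\infty}_{\ii}(\cA)_{\overline{\alpha}}, \cB_\beta) \to \Fun_{\ii}(\cA_\alpha, \cB_\beta),\]
proving the costabilization property.

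The main obstacle I anticipate is verifying that $\coInv_l$ and $\coInv_r$ genuinely commute with the sequential weak colimit at the $\VCat$-enriched level, rather than merely on underlying ordinary categories; this requires carefully invoking the copowering adjunction together with the defining universal property of the weak coend. A secondary bookkeeping difficulty is the alternation between actions mute on the left and actions mute on the right as one iterates $\coInv_{\ii} = \coInv_l \circ \coInv_r$, which is the dual of the asymmetry encountered in Theorem \ref{thm:invinftyisstab} and which becomes invisible exactly when $\ii$ is symmetric monoidal (the dual of Theorem \ref{thm:vvissymmetricimpliesinvisstab}).
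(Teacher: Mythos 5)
Your proposal is correct and follows essentially the same route as the paper, whose entire proof consists of the remark that the argument is dual to that of Theorem \ref{thm:invinftyisstab}; your write-up simply makes that dualization explicit (weak colimits for weak limits, the copowering $\odot$ for the powering $\pitchfork$, and the coevaluations $\eta_s$ for the evaluations $\epsilon_s$), including the same two steps of stability of $\coInv^{\infty}_{\ii}(\cA_{\alpha})$ via the universal morphisms $F$ and $G$ and the universal property via $\coInv^{\infty}_{\ii}(f)$ composed with the $1$-isomorphism $\cB_\beta\to\coInv^{\infty}_{\ii}(\cB_\beta)$.
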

\begin{proof}
	The proof is dual to the proof of Theorem \ref{thm:invinftyisstab}.
\end{proof}

In the case when $\ii $ is a symmetric monoidal category, we can consider  $\co\Inv_{\ii }$ as a costabilization.
\begin{theorem}\label{thm:vvissymmetricimpliescoinviscostab}
If $\ii $ is a symmetric monoidal category, then $\coInv_{\ii }(\cA_{\alpha})$ is equivalent to $\coInv^{\infty }_{\ii }(\cA_{\alpha})$.
\end{theorem}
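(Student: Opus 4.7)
The proof dualizes that of Theorem \ref{thm:vvissymmetricimpliesinvisstab} and proceeds in two stages. In the first stage, I show that $\coInv_\ii(\cA_\alpha)$ is itself a stable $\ii$-object. In the second stage, I verify that $\coInv_\ii(\cA_\alpha)$ equipped with the canonical coevaluation $\eta = \eta_l \circ \eta_r$ satisfies the universal property of Definition \ref{def:costabilization}, so that both it and $\coInv^\infty_\ii(\cA_\alpha)$ (which is a costabilization by Theorem \ref{thm:coinvinftyiscostab}) are costabilizations of $\cA_\alpha$, hence equivalent in $\ii\frM$.

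For the first stage, I dualize the explicit functor $E$ of the previous proof. Working with coends in place of ends, I introduce a $\VCat$-morphism
$$E^\vee : \coInv_r\bigl(\coInv_\ii(\cA_\alpha)\bigr) \longrightarrow \coInv_\ii(\cA_\alpha)$$
induced on indexing diagrams by the $\VV$-functor $(u,v,w) \mapsto (v,\, u \circledast w)$, where the symmetry isomorphism $u \circledast w \cong w \circledast u$ is invoked to align the two coend variables. A direct computation, formally dual to the one sketched for $E$ in Theorem \ref{thm:vvissymmetricimpliesinvisstab}, shows that $E^\vee$ together with $\eta_r: \coInv_\ii(\cA_\alpha) \to \coInv_r(\coInv_\ii(\cA_\alpha))$ forms an adjoint $1$-equivalence in $\ii\frM$. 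By Theorem \ref{thm:costableiffepsilonis1equivalence}, $\coInv_\ii(\cA_\alpha)$ is therefore stable.

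For the second stage, the universal property of $\coInv_\ii(\cA_\alpha)$ as a costabilization reduces to checking, for every stable $\cB_\beta$, that the induced $\VV$-functor $\eta^*: \Fun_\ii(\coInv_\ii(\cA_\alpha),\cB_\beta) \to \Fun_\ii(\cA_\alpha,\cB_\beta)$ is an equivalence of $\VV$-categories. Substituting the coend formula \eqref{eqn:coinv} and using the end description of $\Fun_\ii$ from Section \ref{ssec:vvequivariant1cells}, this becomes a weighted-limit computation in which stability of $\cB_\beta$ — via Theorem \ref{thm:costableiffepsilonis1equivalence} applied to $\beta$ — collapses the $\ii$-indexed weights to evaluation at the monoidal unit $\unit$. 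Combined with Theorem \ref{thm:coinvinftyiscostab} and the uniqueness of costabilizations up to $1$-equivalence built into Definition \ref{def:costabilization}, this yields the desired equivalence $\coInv_\ii(\cA_\alpha) \simeq \coInv^\infty_\ii(\cA_\alpha)$.

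The main obstacle is the bookkeeping in stage one: verifying that $E^\vee$ is genuinely $\ii$-equivariant and that its composites with $\eta_r$ are identities up to $2$-cells satisfying the coend-duals of the coherence constraints \eqref{diag:triangleofend} and \eqref{eqn:sigma1}. As in the proof of Theorem \ref{thm:vvissymmetricimpliesinvisstab}, this rests on repeated use of associativity, unitality, symmetry, and the interchange law in $\ii$; the required chain of natural isomorphisms is formally dual to the final display in that proof, read with arrows reversed and $\circledast$-factors swapped by the symmetry constraint.
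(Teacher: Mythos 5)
Your proposal is correct and follows essentially the same route as the paper: the paper's proof is literally ``dual to Theorem \ref{thm:vvissymmetricimpliesinvisstab}'', whose whole content is the explicit equivalence induced by $(u,v,w)\mapsto (v,u\circledast w)$ together with the (co)evaluation, exactly the $E^\vee$/$\eta_r$ pair you construct in your first stage. Your second stage (verifying the universal property of Definition \ref{def:costabilization} and invoking uniqueness of costabilizations alongside Theorem \ref{thm:coinvinftyiscostab}) just makes explicit the concluding step the paper leaves implicit, so there is no substantive divergence.
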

\begin{proof}
	The proof is dual to the proof of Theorem \ref{thm:vvissymmetricimpliesinvisstab}.
\end{proof}
Let $\ii $ be a symmetric monoidal category and $\frM=\CAT$. Note that  if ${\cA}_{\alpha}$ is mute on the right, then  $\coInv_{\ii }^{\infty }(\cA_{\alpha})$ is equivalent to   $\coInv_{r}(\cA_{\alpha})$. The category $\coInv_{r}(\cA_{\alpha})$ can be constructed (up to equivalence of categories) as follows. Let $\underline{\coInv_{r}(\cA_{\alpha})}$ be the category whose objects are the collections of all pairs $({u},a)$ and triples $({u},{v},a)$ where ${u},{v}$ are objects in $\ii$ and a an object in ${\cA}_{\alpha}$. A morphism between pairs is a morphism in  $ \ii\times \cA$ and a morphism triples is a morphism in $\ii\times \ii\times \cA$. A morphism from the triple $({u},{v},a_1)$ to a pair $({w},a_2)$ is either a morphism from 
$({u},a_1)$ to $({w},a_2)$ or a morphism from $({u}{\circledast} {v}, \alpha( {v},{\unit}^{\op})(a_1))$ to $({w},a_2)$.  There is no morphism from a pair to a triple in $\underline{\coInv_{r}(\cA_{\alpha})}$. Let $S$ be the collection of morphisms from triples to  pairs for  which are identity in both coordinates; that is, a morphism from $({u},{v},a_1)$ to  $({w},a_2)$ is in $S$ if either $a_1=a_2$ and  ${u}={w}$ and the corresponding map
$({u},a_1)$ to $({w},a_2)$ is identity or ${u}{\circledast} {v}={w}$ and  $ \alpha( {v},{\unit}^{\op})(a_1)=a_2$
 and the corresponding map from $({u}{\circledast} {v},\alpha( {v},{\unit}^{\op})(a_1))$ to $({w},a_2)$ is identity. Then  the category $\coInv_{r}(\cA_{\alpha})$ is given by the localization of $\underline{\coInv_{r}(\cA_{\alpha})}$ at $S$. Observe that every triple in $\underline{\coInv_{r}(\cA_{\alpha})}$  become isomorphic to a pair after passing to localization, ${\coInv_{r}(\cA_{\alpha})}$. Besides, every pair of the form $({u}{\circledast} {v},\alpha( {v},{\unit}^{\op})(a_1))$ becomes isomorphic to the pair $({u}, a_1)$. The $\ii$ action on $\coInv_{r}(\cA_{\alpha})$ is given by tensoring of first coordinate from the left. If $\cA$ is stable, then $\eta_r:\cA_{\alpha} \to {\coInv_{r}(\cA_{\alpha})}$ is an equivalence of categories. This can be realized via the isomorphism given by  $({u},a)\mapsto  \alpha({u},{\unit}^{\op})(a)$.

We say a $\VCat$-category $\frM$ is complete (resp. cocomplete) if it is weakly powered (resp. weakly copowered) over $\VCat$ and has all $\VCat$-enriched weak limits (resp. $\VCat$-enriched weak colimits). Combining Theorems \ref{thm:invinftyisstab} and \ref{thm:coinvinftyiscostab}, we obtain the following corollary.
\begin{corollary}\label{cor:main1}
	If $\frM$ is a complete (resp. cocomplete) $\VCat$-category, then $\stVM$ is   coreflective (resp. reflective) in $\ii\frM$.
\end{corollary}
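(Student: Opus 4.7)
The plan is to deduce this directly from Theorems \ref{thm:invinftyisstab} and \ref{thm:coinvinftyiscostab} by checking that completeness (resp. cocompleteness) supplies exactly the weak $2$-limits (resp. weak $2$-colimits) used in the constructions of $\Inv^{\infty}_{\ii}$ and $\coInv^{\infty}_{\ii}$, and then observing that the defining universal property of a stabilization (resp. costabilization) is precisely the defining universal property of the counit (resp. unit) of a biadjunction between $\stVM$ and $\ii\frM$.

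For the coreflective case, suppose $\frM$ is complete. First I would unpack $\Inv_{s}(\alpha)$ from (\ref{eqn:inv(A)}): it is a weak end over $\mathbf{B}\ii^{\op}$ of the diagram $\pitchfork(\ddot{\mu}_{s'},\alpha)$, where $\pitchfork$ is the weak power of $\frM$ over $\bfV$. Since $\bfV$ is a sub-$\VCat$-category of $\VCat$ and $\frM$ is assumed weakly powered over $\bfV$ and to admit all enriched weak limits, both the weak power and the weak end exist, so $\Inv_{s}(\alpha)$ is defined for every $\ii$-action $\alpha$. Iterating, $\Inv^{n}_{\ii}(\cA_{\alpha})$ is defined for every $n \geq 1$, and the sequential weak limit defining $\Inv^{\infty}_{\ii}(\cA_{\alpha})$ exists as well. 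Theorem \ref{thm:invinftyisstab} then applies verbatim and yields a stabilization $\epsilon^{\infty}\colon\Inv^{\infty}_{\ii}(\cA_{\alpha}) \to \cA_{\alpha}$ for every object of $\ii\frM$.

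Next I would assemble these pointwise stabilizations into a coreflector. Because weak ends and weak limits are themselves $\VCat$-functorial in their entries, the assignment $\cA_{\alpha}\mapsto \Inv^{\infty}_{\ii}(\cA_{\alpha})$ extends to a $\VCat$-functor $\ii\frM \to \stVM$, and $\epsilon^{\infty}$ is a $\VCat$-natural transformation from the inclusion $\stVM \hookrightarrow \ii\frM$ composed with this functor to the identity. The universal property in Definition \ref{def:stabilization}, namely the equivalence
\[
\epsilon^{\infty}_{*}\colon \Fun_{\ii}(\cB_{\beta},\Inv^{\infty}_{\ii}(\cA_{\alpha})) \xrightarrow{\ \simeq\ } \Fun_{\ii}(\cB_{\beta},\cA_{\alpha})
\]
for every stable $\cB_{\beta}$, is exactly the hom-equivalence making $\Inv^{\infty}_{\ii}$ right biadjoint to the inclusion with counit $\epsilon^{\infty}$; hence $\stVM$ is coreflective in $\ii\frM$.

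The reflective case is entirely dual. If $\frM$ is cocomplete, then the weak copower $\odot$ and all required weak colimits exist, so $\coInv_{s}(\alpha)$ from (\ref{eqn:coinv}) is defined for every $\alpha$, the iterates $\coInv^{n}_{\ii}(\cA_{\alpha})$ exist, and the sequential weak colimit $\coInv^{\infty}_{\ii}(\cA_{\alpha})$ exists. Theorem \ref{thm:coinvinftyiscostab} then gives a costabilization $\eta^{\infty}\colon \cA_{\alpha}\to \coInv^{\infty}_{\ii}(\cA_{\alpha})$, $\VCat$-functorial in $\cA_{\alpha}$ by the same argument as above, and the equivalence of hom-$\VV$-categories in Definition \ref{def:costabilization} is exactly the hom-equivalence exhibiting $\coInv^{\infty}_{\ii}$ as left biadjoint to the inclusion with unit $\eta^{\infty}$. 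The main (essentially only) subtlety is the last sentence of each half: one must check that the object-level universal property of Definitions \ref{def:stabilization} and \ref{def:costabilization} really is a biadjunction in the sense compatible with the $\VCat$-enrichment of $\ii\frM$; this follows from the bilimit-style definition of weak ends adopted in Section \ref{ssec:enrichedlaxpseudo}, since an equivalence of hom-$\VV$-categories natural in the other variable is exactly the data of a biadjoint.
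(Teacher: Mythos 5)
Your proposal is correct and follows essentially the same route as the paper: the paper's (very terse) argument is precisely that completeness supplies the weak powers and weak limits needed so that $\Inv^{\infty}_{\ii}$ (resp.\ $\coInv^{\infty}_{\ii}$) exists, and then Theorems \ref{thm:invinftyisstab} and \ref{thm:coinvinftyiscostab} give the coreflector (resp.\ reflector), the universal property of Definitions \ref{def:stabilization} and \ref{def:costabilization} being exactly the hom-equivalence of a biadjunction with the inclusion $\stVM\hookrightarrow\ii\frM$. Your additional remarks on assembling the pointwise (co)stabilizations into a $\VCat$-functor only make explicit a step the paper leaves implicit.
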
 
\begin{remark}
Note that if there is a monoidal category $\cG(\ii)$ in which every object is tensor invertible, and a functor $Q:\ii\to \cG(\ii)$ which is universal among functors from $\ii$ to monoidal categories with tensor invertible objects (i.e., $\cG(\ii)$ is a some sort of group completion of $\ii$) then $\stVM$ is equivalent to $[B\cG(\ii),\frM]$. In this case, we can directly stabilize one sided actions, and  the stabilization and costabilization coincides with the $\VCat$-categorical right and left Kan extensions along $BQ$.  
\end{remark}

\subsection{Lax stabilization and spectrification}\label{ssec:laxstabilizationandspectrification}
Let $\ii$ be symmetric monoidal and $\alpha:\HH\to  \frM $ be an $\ii$-action. Assume $\{s,s'\}=\{l,r\}$.  Define an $\ii$-action $\ell\Inv_{s}( {\alpha}):\HH\rightarrow\frM$ as
\begin{equation}\label{eqn:linv}
	\ell\Inv_{s}({\alpha})(-)=\sqint_{x:\mathbf{B}\ii^{\op}} \Theta_{s,\alpha}(-,(x,x))
\end{equation}
so that $\ell\Inv_{s}({\alpha})(*,*)=\ell \Inv_{s}( \cA_{\alpha})$ is isomorphic to 
\begin{equation}\label{eqn:linv(A)} \sqint_{x:\mathbf{B}\ii^{\op}}\pitchfork(\ddot{\mu}_{s'},\alpha)({x,x})
\end{equation}
Assume that $\mu$ is mute on the right (left) and $s=r$ ($s=l$). Then we can call $\ell\Inv_{s}(\cA_{\alpha})$ the lax stabilization of $\cA_\alpha$. There is a canonical morphism $$\iota_\alpha: \Inv_{s}(\cA_{\alpha})\to \ell \Inv_{s}(\cA_{\alpha}).$$ We call the left adjoint of $\iota_\alpha$ in $\ii\frM$ the \emph{spectrification} whenever it exists. 

Assume that $\alpha$ factors through the category of adjunctions $\Adj(\frM)$ whose morphisms are adjunctions in $\frM$. Then, there is an adjoint action $\bar{\alpha}$ on $\cA$ so that for each $u,v$ in $\ii$, $\alpha(u,v^{\op})$ is right adjoint to $\bar{\alpha}(v,u^{\op})$. Then there exist a equivariant morphism $$\hat{{\alpha}}:\cA_{\bar{\alpha}}\to\ell\Inv_{s}(\cA_{\alpha})$$ induced by the isomorphism $[\cA, \pitchfork(\ii,\cA)]\cong \CAT(\ii,[\cA,\cA]).$ Therefore, if there is a spectrification, then there exist an $\ii$-equivariant morphism $\cA_{\bar{\alpha}}\to\Inv_{s}(\cA_{\alpha})$. Composing with $\epsilon_{s}$ we obtain an $\ii$-equivariant morphism $\cA_{\bar{\alpha}}\to\cA_{\alpha}.$

In the case $\frM=\CAT$ and $s=r$, ${\hat{\alpha}}$ sends an object $a$ in $\cA$  to a pair $(\hat{\alpha}(a),\sigma_{\hat{\alpha}(a)})$ where $\hat{\alpha}(a)(u)={{\bar{\alpha}}}(\unit,u^{\op})(a)$ and $$\sigma_{\hat{\alpha}(a)}(u)_v:{{\bar{\alpha}}}(\unit,u^{\op})(a)\to \alpha(v,\unit^{\op})({{\bar{\alpha}}}(\unit,(u{\circledast} v)^{\op})(a))$$ is the adjunct of the identity map $\id_{{{\bar{\alpha}}}(\unit,u^{\op})(a)}:{{\bar{\alpha}}}(\unit,u^{\op})(a)\to {{\bar{\alpha}}}(\unit,u^{\op})(a)$ with respect to the adjunction $({{\bar{\alpha}}}(\unit,v^{\op})\dashv \alpha(v,\unit^{\op}))$. If $L$ denotes the spectrification, then we obtain an $\ii$-equivariant functor $\cA_{\bar{\alpha}}\to\cA_{\alpha}$ is given by the composition $$\cA_{\bar{\alpha}}\overset{\hat{\alpha}}\lto\ell\Inv_{r}(\cA_{\alpha})\overset{L}\lto \Inv_{r}(\cA_{\alpha})\overset{\epsilon_{r}}\lto \cA_{\alpha}.$$
In this case, lax stabilization of $\cA$ with respect to $\alpha$ and with respect to $\bar{\alpha}$ are isomorphic as categories. In fact, the category $\ell\Inv_{r}(\cA_{\bar{\alpha}})$ consists of pairs $(F,\varsigma)$ where $F:\ii\to \cA$ is a functor and $\varsigma_{u,v}:\bar{\alpha}(\unit,v^{\op})(E(u))\to E(u{\circledast} v)$ is a morphism that is natural in both $u$ and $v$. Given such a pair, we can define a new pair $(F,\sigma)$ where $\sigma_{u,v}:E(u)\to \alpha(v,\unit^{\op})( E(u{\circledast} v))$ is the adjunct of $\varsigma_{u,v}$. This pair defines an object in $\ell\Inv_{r}(\cA_{\alpha})$. Similarly, any object in $\ell\Inv_{r}(\cA_{\alpha})$ defines an object  in  $\ell\Inv_{r}(\cA_{\bar{\alpha}})$. Therefore, the association $(F,\varsigma) \mapsto (F,\sigma)$ define an isomorphism of categories; i.e., we have
$$\ell\Inv_{r}(\cA_{\alpha}) \cong \ell\Inv_{r}(\cA_{\bar{\alpha}}).$$

\section{Examples of stable $\ii$-objects and stabilizations}
\label{sec:examplesofstableactions}

Our primary examples of stable objects and stabilizations come from reduced (co)homology theories. Traditionally, (co)homology theories are measurement tools that compare homotopy properties of objects via comparing corresponding algebraic data. However, one does not have to use algebraic data to make such comparisons. It can be done by means of any functor between any two category. If $\cA$ and $\cB$ are two categories, and $F:\cA\to \cB$ is a functor between them than if $Fx\ncong Fy$ in $\cB$ then $x\ncong y$ in $\cA$. More generally, if $F$ is a functor that preserves certain structure (such as homotopy) than $Fx$ and $Fy$ not sharing this structure implies neither do $x$ and $y$. In this paper we propose a more general and unorthodox definitions of a homology and cohomology functors, whose source and target are homotopy categories, where the target has the notion of ``exact sequence". For the sake of computational tools, as in the classical (co)homology theories, we require these functors to satisfy certain axioms similar to the classical Eilenberg-Steenrod axioms. Notions of suspension and loop functors, which are in fact actions of certain monoidal categories, are also generalized  accordingly.

\subsection{$\ii$-graded (co)homology theories}
\label{ssec:vvgradedtheories}
For this section we let $\ii$ be symmetric monoidal and consider $\ii$-actions on categories that are $\pi_0$ of pointed $(\infty,1)$-categories; in particular, homotopy categories of certain pointed homotopy theories. Therefore, in the categories on which  $\ii$ acts, homotopy fiber and cofiber sequences exists. Let $f:\cA\to \cS$ be a functor between such categories where $\cS$ admits a notion of exact sequences (e.g., an Abelian category). We say $f$ is \emph{left exact} if it sends homotopy fiber sequences to exact sequences, and \emph{right exact} if it sends homotopy cofiber sequences to exact sequences.

We first consider a general definition of cohomology and homology functors in view of Eilenberg-Steenrod axioms. Let $\cA$ and $\cS$ be two pointed categories such that $\cS$ has exact sequences. 
	\begin{description}
		\item[Cohomology Functors]  A product preserving right exact functor $h:\cA^{\op}\to \cS$ is called a cohomology functor. The full subcategory of cohomology functors in $[\cA^{\op},\cS]$ is denoted by $\cohml(\cA,\cS)$.
		\item[Homology Functors] A coproduct preserving left exact functor $h:\cA\to \cS$ is called a homology functor. The full subcategory of homology functors in $[\cA,\cS]$  is denoted by $\hml(\cA,\cS)$.
	\end{description}

Let $\alpha$ be an $\ii$-action on $\cA^{\op}$ that is mute on the left. Consider $\cS$ with the trivial action. Then  the functor category $[\cA^{\op},\cS]$ admits an $\ii$-action as described in \ref{eqn:actiononAB}, which is mute on the right. We denote this action by $[\alpha,1]$. Under certain conditions  $[\alpha,1]$ restricts to the subcategory $\cohml(\cA,\cS)$ of cohomology functors. For example, it is enough to assume $\alpha$ is cocontinuous  (preserves $(\infty,1)$-colimits that exist in $\cA$).

We define category of cohomology theories graded over $\ii$ as follows:
\begin{definition}\label{def:vvgradedcohomologytheories} The category of cohomology theories graded over $\ii$ is defined as \[\COHML_\ii(\cA,\cS)=\Stab_{\ii }(\cohml(\cA,\cS)_{[\alpha , 1]});\] i.e., the stabilization of cohomology functors with respect to $[\alpha , 1]$. We  say that a cohomology theory graded over $\ii$ is an object in the category of cohomology theories.
\end{definition}

We can similarly define homology theories. Suppose now that    $\alpha$ is an $\ii$-action on $\cA$ that is mute on the left, so that the induced action $[\alpha,1]$ on  $[\cA,\cS]$ restricts to $\hml(\cA,\cS)$ (e.g.,  $\alpha$ is cocontinuous).
\begin{definition}\label{def:vvgradedhomologytheories}
	The category of homology theories graded over $\ii$ is defined as \[\HML_\ii(\cA,\cS)=\Stab_{\ii}(\hml(\cA,\cS)_{[\alpha,1]});\] i.e., the stabilization of homology functors with respect to action induced by $[\overline{\alpha }, 1]$.  We say that a homology theory graded over $\ii$ is an object in the category of homology theories.\end{definition}

\subsubsection{Axiomatic interpretation of the definitions above }\label{sssec:axiomsofvvgradedtheories}
Unfolding the definitions above we can see that a cohomology theory graded over $\ii$ consists of a functor $h:\ii \times\cA^{op}\to \cS$ such that each $h^u=h({\unit})({u})$ is a cohomology functor, together with natural isomorphisms $$\sigma_{u,v}: h^u \to  h^{{u}{\circledast} {v}}\circ \alpha({\unit},{v}^{\op})$$ such that  the following diagram commutes
\begin{equation}\begin{tikzpicture}[scale=1.55]
		\node (AB2) at (2,1) {$h^{u}(a)$};
		\node (BC) at (4,0) {$h^{{u}{\circledast} {{v}}{\circledast} {{w}} }(\alpha({\unit},({{v}}{\circledast} {{w}})^{op})(a))$};
		\node (AB) at (0,0) {$ h^{{u}{\circledast} {v}}(\alpha({\unit},{{v}}^{op})(a))$};

		\path[->,font=\scriptsize,>=angle 90]
		
		(AB) edge node[below]{$\sigma_{{u}{\circledast} {v},{w}} $} (BC)
		(AB2) edge node[right]{$\sigma_{{u},{{v}}{\circledast} {{w}}}$} (BC)
		(AB2) edge node[left]{$\sigma_{{u} ,{v}}$} (AB);

\end{tikzpicture}\end{equation}
for every $u,{v},{w}$  in $ \ii$. 
This diagram is induced by the commuting triangle \ref{diag:triangleofend}. Moreover, naturality of $\sigma$ implies that the following diagram commutes
\begin{equation}\begin{tikzpicture}[scale=1.55]
		\node (AB2) at (1,1) {$h^{u}(a)$};
		\node (BC2) at (4,1) {$ h^{{u}{\circledast} {v}}(\alpha({\unit},{{v}}^{op})(a))$};
		\node (AB) at (1,0) {$ h^{{u}{\circledast} {w}}(\alpha({\unit},{{w}}^{op})(a))$};
		\node (BC) at (4,0) {$h^{{u}{\circledast} {v}}(\alpha({\unit},{{w}}^{op})(a))$};
		\path[->,font=\scriptsize,>=angle 90]
		(AB) edge node[above]{$h^{{u}{\circledast} m}(\id) $} (BC)
		(AB2) edge node[above]{$\sigma_{u,v} $} (BC2)
		(AB2) edge node[right]{$\sigma_{u,w}$} (AB)
		(BC2) edge node[right]{$h^{{u}{\circledast} {v}}(\alpha({\unit},{m}^{op})(a))$} (BC);
\end{tikzpicture}\end{equation}
for  any morphism $m:{{v}}\to {{w}}$ in $\ii$.

Similarly, a homology theory graded over $\ii$ consists of a functor $h:\ii \times\cA\to \cS$ such that for every $u$ in $\ii$, the functor $h_u$ given by $h_u(a)=h({\unit},{u})(a)$ is a homology functor, together with natural isomorphisms $$\sigma_{u,v}:   \alpha({\unit},{v}^{op})\circ     h_{{u}{\circledast} {v}} \to h_u$$ satisfying similar conditions dual to the ones above.

\subsection{Some known examples of (co)homology theories}\label{ssec:knownexamplesofvvgradedtheories}
Definitions of several existing cohomology and homology theories fit into the setting that described above, after choosing actions appropriately. Some examples of actions below are non-strict; however, they are particular cases of the setting above in view of Remark \ref{rem:pseudotostrict}. In the examples below, a space means a compactly generated and weakly Hausdorff topological space. The first two of the examples below are also mentioned in first author's thesis \cite{erdal} as examples of (co)homology theories graded by monoidal categories. However, the approach therein  does not include the universal descriptions given here.
\subsubsection{Generalized ordinary (co)homology theories}\label{sssec:ordinarytheories}
Let $\mathbb{N}$ be the natural numbers considered as a monoidal category with identity morphisms as the only morphisms. Let $\cAb$ denote the category of abelian groups. Consider $\cAb$ with the trivial $\bbN$-action. Let $\ho\cT$ be the homotopy category of pointed spaces with respect to Quillen model structure. Define  the  $\bbN$-action $\Sigma$ by  \[\Sigma (n,m^{\op})(X)=X\wedge \mathbb{S}^m=\Sigma^mX\] for $n,m$ in $\mathbb{N}$ and $X$ in $\ho\cT$. Then a generalized (co)homology theory $h$ in our setting gives a generalized (co)homology theory satisfying the Eilenberg-Steenrod (co)homology axioms.

	\subsubsection{Equivariant cohomology theories graded over representations}
\label{sssec:ROGradedCohomologyTheories}
Let $G$ be a compact lie group and $U$ be a complete $G$-universe; that is, a countably infinite dimensional orthogonal $G$-representation having non-zero $G$-fixed points and contains the direct sum $V^{\oplus \lambda}$ for every finite dimensional representation $V$ and every cardinal $\lambda\leq \aleph_0$, see \cite[Defn. 1.1.12]{schwedeglobal} or \cite[Ch. II, Defn. 1.1]{mandellmay}.   Let $\ii $ be the monoidal category $\mathcal{RO}(G;U)$ whose objects are orthogonal $G$-representations embeddable in $U$ and whose morphisms are $G$-linear isometric isomorphisms and monoidal product is the direct sum. For $V$ an object in $\ii $, denote by $\bbS^V$ the one point compactification of $V$. Let $h\mathcal{RO}(G;U)$ be the quotient of $\mathcal{RO}(G;U)$ with the relation given by $f\sim g:V\to W$ if the induced maps $f_*\simeq_s g_*:\bbS^V\to \bbS^W$  are stably homotopic \cite[see pp.130]{may}. Let $G\cT$ denote the category of pointed $G$-spaces and pointed $G$-maps with the standard model structure and $\ho G\cT $ be its homotopy category. Let $\cAb$ be the category of abelian groups and $1$ denote the trivial $\ii $-action on $\cAb$.  We define an action $\Sigma$ on $\ho G\cT^{\op}$  as follows:
\[\Sigma (V,W^{\op})(X)= \Sigma^WX=X\wedge \bbS^W;\]
that is, the usual suspension action. For each $V,W$ in $\ii$ the suspension functor $\Sigma^W$  preserves homotopy colimits in $G\cT$, see \cite{lewis1986equivariant}. Hence $\Sigma$ induces an action on $\cohml(\ho G\cT,\cAb)$.  An $RO(G)$-graded cohomology theory is an object in the stabilization of $\cohml(\ho G\cT,\cAb)$ with respect to the action $[\Sigma,1]$. Therefore, a cohomology theory is a pair $(h,\sigma)$ where $h:h\mathcal{RO}(G;U)\times \ho G\cT^{\op}\to \cAb$ is a functor with $h^V=h(V,-)$ a cohomology functor for every $V$ in $h\mathcal{RO}(G;U)$, and $$\sigma_{V,W}: h^V \to  h^{{W}\oplus {V}}\circ \Sigma^W$$ are natural isomorphisms. For each pair of representations $W,Z$, as above, we have the following diagram commutes
	\[\begin{tikzpicture}[scale=1]
		\node (0) at (6,0) {$h^{V\oplus W\oplus Z}(\Sigma^{(W\oplus Z)}X  )$};
		\node (a) at (3,1.5) {$h^{V}(X)$};
		\node (b) at (0,0) {$h^{V\oplus W}(\Sigma^W X )$};
		\path[->,font=\scriptsize,>=angle 90]
		(a) edge node[left]  {$\sigma_{V,W}$} (b)
		(b) edge node[below] {$\sigma_{V\oplus W,Z}$} (0)
		(a) edge node[right] {$\sigma_{V,W\oplus Z}$} (0);
	\end{tikzpicture}.\]
If $m : W\rightarrow W'$ is a morphism in $h\mathcal{RO}(G;U)$, then by naturality of $\sigma$ we have the following diagram commutes
 \[\begin{tikzpicture}[scale=.8]
 	\node (0) at (1,0) {$h^{V\oplus W'}(  \Sigma^{W'}X)$};
 	\node (c) at (9,0) {$h^{V\oplus W'}( \Sigma^WX )$};
 	\node (a) at (1,2) {$h^{V}(X)$};
 	\node (b) at (9,2) {$h^{V\oplus W}(  \Sigma^WX)$};
 	\path[->,font=\scriptsize,>=angle 90]
 	(a) edge node[above]  {$\sigma_{V,W}$} (b)
 	(b) edge node[right] {$h(id\oplus m)$} (c)
 	(a) edge node[left] {$\sigma_{V,W'}$} (0)
 	(0) edge node[above] {$h^{V\oplus W'}(id\wedge \bbS^{m })$ } (c);
 \end{tikzpicture}\]
This definition of cohomology theory is the same as the one given in \cite[VII, Defn. 1.1]{may}.
Dualizing the definition, one obtains $RO(G)$-graded equivariant homology theories.

\subsubsection{Equivariant cohomology theories graded over actions on spheres}
\label{sssec:spheregradedCohomologyTheories}
Let $\ii$ be the category whose objects are pointed spheres (with $S^{-1}=*$) with continuous base-point preserving $G$-actions, morphisms are pointed $G$-isomorphisms between them. It is a monoidal category with $G$-smash product with diagonal action and $S^{-1}=*$ as the unit. We denote an object in $\ii$ by $S^m_\mu$, where $n$ is the dimension of the sphere and $\mu$ is the $G$-action.  Note that $\mathcal{RO}(G;U)$ faithfully embeds in $\ii$.  Define  $\Sigma$ on $\ho G\cT^{\op}$  similar to the above; i.e., 
\[\Sigma (S^n_\xi,(S^m_\mu)^{\op})(X)= \Sigma^{S^m_\mu}X=X\wedge S^m;\]
with the diagonal action. Clearly, $\Sigma$ induces an action on $\cohml(\ho G\cT,\cAb)$.  An $\ii$-graded cohomology theory is an object in the stabilization of $\cohml(\ho G\cT,\cAb)$ with respect to the action $[\Sigma,1]$. Therefore, an $\ii$-graded  cohomology theory is a pair $(h,\sigma)$ where $h:\ii\times \ho G\cT^{\op}\to \cAb$ is a functor with $h^{S^n_\mu}=h({S^n_\mu},-)$ a cohomology functor for every ${S^n_\mu}$ in $\ii$, and $$\sigma_{{S^m_\mu},{S^n_\xi}}: h^{S^n_\xi} \to  h^{{S^m_\mu}\wedge {{S^n_\xi}}}\circ \Sigma^{S^m_\mu}$$ are natural isomorphisms. The compatibility diagrams commute as above.

\subsubsection{Parameterized cohomology theories graded over vector bundles}
\label{sssec:bundlegradedparameterizedtheories}
For the original definitions of parameterized reduced cohomology theories, see \cite{mayparametrized}. These theories are cohomology theories for the categories of ex-spaces. Let $B$ be a space and $\cT/ B$ be the category of spaces over $B$. The terminal object in this category is the identity of $B$. The category of based spaces over $B$,  $\cT_B$, is the under category $\cT_B=\id_B/(\cT/B)$. Objects of this category are often called \emph{ex-spaces} of $B$, see e.g. \cite{james1995fiberwise}. Let $\ho\cT_B$ be the homotopy category of $\cT_B$ with respect to the model structure of  \cite[Ch. 6]{mayparametrized}. Ordinary parameterized cohomology theories takes values from  $\ho\cT_B$  and graded over integers in \cite{mayparametrized}. On the other hand, this category admits other obvious suspensions than the ordinary one. 

Let $\ii$ be the monoidal category whose objects are real vector bundles over $B$ with fiberwise inner products (i.e., taking values in $B\times R$ the trivial $\mathbb R$-bundle over $B$), fiberwise linear isometric isomorphisms as morphisms, Whitney sum $\oplus_B$ as the monoidal product and $0$-bundle as the monoidal unit.  For $\xi $ in $\ii$, denote by $\bbS^\xi$ the associated fiber-wise one point compactification, which is a sphere bundle over $B$ with point as the section induced by the zero section of $\xi$. Given bundles $\xi,\eta$ in $\ii$, define $\Sigma_B$ by \[\Sigma_B(\xi,\eta^{op})(t:X\to B)=t \wedge_B \bbS^{\eta},\] see \cite[Definition 1.3.3]{mayparametrized} for $\wedge_B $. The construction $\Sigma_B$  defines an action on $\ho\cT_B$. Moreover, for every $\xi,\eta$ in $\ii$ $\Sigma_B(\xi,\eta^{op})$ preserves homotopy colimits in $\cT_B$. Consider  $\cAb$   with the trivial $\ii$-action $1$. Then the $\ii$-action $[{\Sigma }_B, 1]$ on $[ \ho\cT_B^{\op},\cAb]$ induces a $\ii $-action on
$\cohml(\ho\cT_B,\cAb)$. A parameterized cohomology theory graded over $\ii$ is an object $\Stab_{\ii }(\cohml(\ho\cT_B,\cAb))$. Since $\ii$ is symmetric monoidal, the category $\Stab_{\ii }(\cohml(\ho\cT_B,\cAb))$ is equivalent to $\Inv_l(\cohml(\ho\cT_B,S))$. Then  a parameterized cohomology theory graded over $\ii$ is a pair $(h,\sigma)$ where $h:\ii \to \cohml(\ho\cT_B,\cAb)$ is a functor and $\sigma$ is the associated desuspension. More precisely, a parameterized cohomology theory graded over $\ii$ consists of a functor  $h:\ii\times \ho \cT_B^{\op}\to \cAb$ such that for every $\xi$ in $\ii$ $h^\xi =h({\unit})(\xi) $ is a cohomology functor in $\ho\cT_B$, together with natural isomorphisms 
\[\sigma_{\xi,\eta}:h^\xi \rightarrow h^{\xi \oplus_B \eta}\circ   \Sigma_B^\xi\]
for every  $\xi,\eta$ in $\ii$. Moreover, for every object  $\tau:X\to B$ in $\ho\cT_B$ the following diagram commutes
	\[\begin{tikzpicture}[scale=1]
	\node (0) at (6,0) {$h^{\xi\oplus_B \eta\oplus_B \zeta}(\Sigma_B^{(\eta\oplus_B \zeta)}\tau  )$};
	\node (a) at (3,1.5) {$h^{\xi}(\tau)$};
	\node (b) at (0,0) {$h^{\xi\oplus_B \eta}(\Sigma_B^\eta \tau )$};
	\path[->,font=\scriptsize,>=angle 90]
	(a) edge node[left]  {$\sigma_{\xi,\eta}$} (b)
	(b) edge node[below] {$\sigma_{\xi\oplus_B \eta,\zeta}$} (0)
	(a) edge node[right] {$\sigma_{\xi,\eta\oplus_B \zeta}$} (0);
\end{tikzpicture}.\]
  If $f : \eta\rightarrow \eta'$ is a morphism in $\ii$, then we have a commutative diagram as follows:
	\[\begin{tikzpicture}[scale=.8]
		\node (0) at (1,0) {$h^{\xi\oplus_B \eta'}(\Sigma_B^{\eta'} \tau)$};
		\node (c) at (9,0) {$h^{\xi\oplus_B \eta'}(\Sigma_B^\eta \tau)$};
		\node (a) at (1,2) {$h^{\xi}(\tau)$};
		\node (b) at (9,2) {$h^{\xi\oplus_B \eta}(\Sigma_B^\eta \tau)$};
		\path[->,font=\scriptsize,>=angle 90]
		(a) edge node[above]  {$\sigma_{\xi,\eta}$} (b)
		(b) edge node[right] {$h(id\oplus_B f)$} (c)
		(a) edge node[left] {$\sigma_{\xi,\eta'}$} (0)
		(0) edge node[above] {$h^{\xi\oplus_B \eta'}(id\wedge_B\bbS^{f })$ } (c);
	\end{tikzpicture}\]
	since $\sigma $ is natural transformation.
Passing to isomorphism classes in $\ii$, one obtains $KO_0(B)$-graded parameterized cohomology theories (for a slightly different definition see \cite[Sec.1]{prieto1987ko}).

One can also pass to bundles fibrewise embeddable in a universe bundle for convenience. Let $\mathfrak{u}$ be a vector bundle in $\ii$, such that for every finite dimensional vector bundle $\xi$ in $\ii$ and for every $\lambda\leq \aleph_0$,  there is a monomorphism $\xi^{{\circledast}_B \lambda}\to \mathfrak{u}$ of vector bundles. Let $\ii_{\mathfrak{u}}$ be the subcategory of $\ii$ whose objects are such $\xi$'s and whose morphisms are isomorphisms in $\ii$ between them. Denote by $\Sigma_{B^{\mathfrak{u}}}$ the restriction of the action $\Sigma_B$. Then, the associated stabilization of cohomology functors with respect to the action $\ii_{{\mathfrak{u}}}$-action $[\Sigma_{B^{\mathfrak{u}}}, 1]$ gives the parameterized cohomology theories indexed by a universe.

One can go further and grade parameterized cohomology theories over the symmetric monoidal category $\underline{\ii}$ of sphere bundles over $B$ admitting sections, with fiberwise isomorphisms of pointed bundle maps as morphisms. More precisely, objects of $\underline{\ii}$ are pairs $(\xi,s)$ where $\xi:E\to B$ is a sphere bundle and $s:B\to E$ is a section. The monoidal product is given by the fiber-wise smash product on the first coordinate and the unique section of the pushout in  \cite[Definition 1.3.4]{mayparametrized}  on the second coordinate. The action is defined similarly; that is, given objects $(\xi,s),(\eta,t)$ in $\underline{\ii}$, $\underline{\Sigma_B}$ is given by  \[\underline{\Sigma_B}((\xi,s),(\eta,t)^{op})(\tau:X\to B)=\tau \wedge_B \eta.\] Then  $\underline{\Sigma_B}$ induces an action on $\cohml(\ho\cT_B,\cAb)$ and an object in the stabilization of $\cohml(\ho\cT_B,\cAb)$ with respect to $\underline{\Sigma_B}$ defines a parameterized cohomology theory graded over sphere bundles admitting sections. The properties enjoyed by such a theory can be obtained similar to the one described above. 


{\section{(Co)stabilizations of relative categories with respect to $\ii$-actions}\label{sec:costabilizationsofrelativecategories}
The category $\RelCat$ of relative categories and relative functors between them  admits a model structure due to Barwick-Kan \cite[6.1]{barwick2012relative}. We assume every relative category is semi-saturated; i.e., every isomorphism is a weak equivalence. Besides, $\RelCat$  is a cartesian closed symmetric monoidal category \cite[7.1]{barwick2012relative}. It is in particular, a $2$-category ($\Cat$-enriched) and powered and copowered over any small category.
If  $I$ is a small category and $\cA=(A,W_A)$ is a relative category, then the powering $\pitchfork(I,\cA)$ is given by $(A^I,W^I_A)$, where $W^I_A$ denotes the pointwise weak equivalences. Similarly, the copowering $I \odot \cA$ is given by  $(I\times A, I\times W_A)$. In this section we assume  $\ii$ is symmetric monoidal.

\subsection{Stabilization of relative categories with respect to $\ii$-actions} We here first discuss stabilizations in the $2$-category of relative categories, relative functors and natural transformations, without referring the homotopy theory of $\RelCat$. Let $\alpha$ be an $\ii$-action on $\cA=(A,W_A)$ that is mute on the right (i.e., a left action). Then, by Theorem \ref{thm:vvissymmetricimpliesinvisstab},  $\Stab_{\ii}(\cA_\alpha)$ is equivalent to $\Inv(\cA_{\alpha})$. If $\alpha$ is mute on the right, then we have also an equivalence  $\Inv(\cA_{\alpha})\cong  \Inv_l(\cA_{\alpha})$. The objects of $\Inv_l(\cA_{\alpha})$ are pairs $(E,\sigma)$ where $E:\ii\to A:u\mapsto E_u$ is a relative functor (with trivial relative structure on $\ii$)  and for every $v$ in $\ii$ $$\sigma(v):E\Rightarrow \  \pitchfork(\ddot{\mu}_{r},\alpha)(v,{\unit}^{\op})(E) $$ is a natural isomorphism. Here, we note that $\pitchfork(\ddot{\mu}_{r},\alpha)({\unit} ,u^{\op})(E) =E$ as $\alpha$ is mute on the right. Write $\alpha^v=\alpha(v,\unit^{\op})$ and $\sigma(v)_u=\sigma_{u,v}$ (while noting that $\alpha(v,\unit^{\op})=\alpha(v,w^{\op})$ for every $w$ in $\ii$). Then, for every $u,v$ in $\ii$, $\sigma$ defines an isomorphism  $\sigma_{u,v}:E_u\to \alpha^vE_{u{\circledast} v}$. The diagram \ref{diag:triangleofend} implies that for every $w$ in $\ii$ the following triangle commutes
	\begin{equation}\label{diag:triangleofstab}\begin{tikzpicture}[scale=1.55]
		\node (AB2) at (2,1) {$ E_{u}$};
		\node (AB) at (1,0) {$\alpha^vE_{u {\circledast} v}$}; 
		\node (BC) at (4,0) { $\alpha^{v{\circledast} w}E_{u {\circledast}  v {\circledast}  w}$};
		\path[->,font=\scriptsize,>=angle 90]
		(AB) edge node[below]{ }(BC)
		(AB2) edge node[left] {$\sigma_{u,{v}}$} (AB)
		(AB2) edge node[right] {$\sigma_{u,{v}{\circledast} {w}}$} (BC);
\end{tikzpicture}\end{equation}
where the bottom horizontal map is the composition 
$$\alpha^vE_{u {\circledast} v}\stackrel{\alpha^{v}(\sigma_{u {\circledast} v,w})}\lto\alpha^{v}\alpha^{w}E_{u {\circledast}  v {\circledast}  w} \stackrel{\cong}\lto \alpha^{v{\circledast} w}E_{u {\circledast}  v {\circledast}  w}$$
Morphisms are natural transformations that are coherent with the $\ii$-action, and weak equivalences are defined levelwise. This category admits a right action $\tilde{\alpha}$ given by $\tilde{\alpha}({u},{v}^{\op})(f,\sigma)=(f\circ \ddot\mu_r({u}),\tilde{\sigma}({u}))$ where $\tilde{\sigma}({u})_{{v},{w}}=\sigma{({u}{\circledast} {v})_{w}}$. 

On the other hand, the lax stabilization of $\cA$ with respect to $\alpha$ is the same except that $\sigma$ in the above construction is not required to be an isomorphism but just a natural transformation.	In the case when the underlying relative category $\cA$ is a model category and $\ii$ acts by left Quillen functors, the lax stabilization coincides with the (dual version) lax homotopy limit in \cite{bergner2012homotopy}; and therefore, admits a model structure where weak equivalences and fibrations defined levelwise.

{ One equivalently uses $\Inv(\cA_{\alpha})$ for the stabilization, which has a natural left action. In this case, objects can be viewed as pairs $(E,\varsigma)$ where $E:\ii\times \ii\to A$ and is a functor and $\varsigma:E\to \pitchfork(\ddot{\mu}_{r}\times \ddot{\mu}_{l},\alpha)(E)$ is a natural isomorphism; so that, for every $u,v,w,z$ in $\ii$, $\varsigma_{u,v;w,z}:E(u,v)\to \alpha(w,z^{\op})E(u{\circledast} w,z{\circledast} v)$ is a natural isomorphism. Writing $E^{u\ominus v}=E(u,v)$ and $\alpha^{w\ominus z} =\alpha(w,z^{\op})$, we can see that  $\Inv(\cA_{\alpha})$ just extends the indexing in $\Inv_l(\cA_{\alpha})$ to formal differences in $\ii$. Again, in the lax version, the assumption on $\varsigma$ being invertible is dropped. }

\subsubsection{A construction of spectrification}\label{sssec:constructionofspectrification}
For the case of the actions on relative categories, we can construct spectrification as in \cite{elmendorfkriz} or \cite{lewis1986equivariant}.  Let $\ii$ be symmetric monoidal and $\alpha$ be an $\ii$-action on a relative category $\cA$. Assume that $\alpha$ is mute on the right. Recall that spectrification is left adjoint to the natural inclusion $\iota_\alpha: \Inv_{s}(\cA_{\alpha})\to \ell \Inv_{s}(\cA_{\alpha})$. Let $\Tr(\ii)$ denote the transport category of $\ii$; i.e., category whose objects are objects of $\ii$ and a morphism $u\to v$ in $\Tr(\ii)$ is a pair $(w,f)$ where $w$ is an object and $f:u{\circledast} w\to v$ is an isomorphism in $\ii$. The composition of $(w,f)$ and $(z,g)$ is $(z{\circledast} w, g\circ f{\circledast} \id_w)$. Suppose that $\alpha$ is an action that commutes with  $\Tr(\ii)$ shaped colimits. Then  the inclusion $\iota_\alpha$ admits a left adjoint $L$ given as follows: For $(E,\sigma)$ an object in $\ell \Inv_{s}(\cA_{\alpha})$, let $F^{\alpha,E,z}:\Tr(\ii)\to \cA$ be the functor given by $F^{\alpha,E,z}(u)=\alpha(u,\unit^{\op})E({z{\circledast} u})$ and $F^{\alpha,E,z}((w,f))$ is the composition 
$$\alpha(u,\unit^{\op})E({z{\circledast} u}) \overset{u\cdot \sigma}\lto  \alpha(u{\circledast} w,\unit^{\op})E({z{\circledast} u{\circledast} w}) \overset{f^*}\cong  \alpha(v,\unit^{\op})E({z{\circledast} v})$$
Then  
$L E:\ii\to \cA$ is the functor given by $$L E(z)=\colim_{\Tr(\ii)} F^{\alpha,E,z}$$
We call the left adjoint of this inclusion \emph{spectrification}. The $x$ component of $\sigma$ is given by the following composition 
$$\alpha(x,\unit^{\op})(L E(z{\circledast} x))=\alpha(x,\unit^{\op})(\colim_{u\in \Tr(\ii)}\alpha(u,\unit^{\op})E(z{\circledast} x{\circledast}  u))$$ $$
= \colim_{u\in \Tr(\ii)}\alpha(x{\circledast} u,\unit^{\op})E(z{\circledast} x{\circledast}  u)\cong \colim_{u\in \Tr(\ii)}\alpha( u,\unit^{\op})E(z{\circledast}  u)=L E(z)
$$
so that $L E$ is an object in $\ii\frM$.  The naturality of $\sigma$ follows from the naturality of colimit. The morphism $L$ is $\ii$-equivariant via  $L E(x{\circledast} -)\cong L(E(x{\circledast} -))$.


\subsection{Homotopy stabilizations of $\ii$ actions on a model category}\label{ssec:specialcase}
Consider the special case when the symmetric monoidal category $\ii$ is a groupoid; that is, the $2$-category $\mathbf{B}\ii$ is a $(2,1)$-category and $\alpha$ is an action on a relative category $\cA$ given by a $2$-functor from $\HH$ to the $2$-truncation of the $(\infty,1)$-localization of the model category $\RelCat$ with respect to the Barwick-Kan model structure. Then $\Inv_l(\cA_\alpha)$, as given by an weak end, coincides with the homotopy end (i.e., $(2,1)$-limit). In the case when the relative category $\cA$ is a model category and $\ii$ acts on $\cA$ by left Quillen functors, then following \cite{bergner2012homotopy} one obtains a construction of $\Inv_l(\cA_\alpha)$ in this case same as above. Objects of $\Inv_l(\cA_\alpha)$ are pairs $(E,\sigma)$ where $E:\ii\to A:u\to E_u$ is a functor and $$\sigma(v):E\Rightarrow \  \pitchfork(\ddot{\mu}_{r},\alpha)(v,{\unit}^{\op})(E) $$ is a natural weak equivalence for every $v$ in $\ii$; i.e., $\sigma_{u,{v}}$ is a weak equivalence. The diagram  \ref{diag:triangleofstab} commutes, which corresponds to the compatibility condition mentioned in \cite[Defn. 3.1]{bergner2012homotopy}.  Observe that $\Inv_l(\cA_\alpha)$ here is homotopically correct as its $(\infty,1)$-localization  is stable with respect to the induced action.

Even though the underlying relative category $\cA$ is a model category, its stabilization $\Inv_l(\cA_\alpha)$ does not need to have a model structure. On the other hand, its lax stabilization $\ell \Inv_l(\cA_\alpha)$ admits a model structure; namely, the level model structure, see also \cite{bergner2012homotopy}. If there exist a spectrification functor $L: \ell\Inv_l(\cA_\alpha)\to \Inv_l(\cA_\alpha)$ and the left Bousfield localization of the model structure on the lax stabilization with respect to maps inverted by $L$ (from the weak equivalences in the stabilization) exists, then the new model structure defines a stabilization, as its homotopy category is equivalent to the homotopy category of  $\Inv_l(\cA_\alpha)$ (see also \cite{bousfieldfriedlander}). Here, note that the spectrification together with the inclusion of stabilization into lax stabilization defines a deformation retract of homotopical categories in the sense of \cite{dhks}, therefore, have equivalent homotopy categories.

On the other hand, the costabilization in this case coincides with a Grothendieck-type construction, see \cite{bergner2014hocolim} and \cite{harpaz2015grothendieck}, which is a homotopy colimit in $\RelCat$ with respect to the Barwick-Kan model structure. As in the case of the stabilization, $(\infty,1)$-localization  of costabilization obtained in this way is stable with respect to the induced action.

\subsection{Some known examples of stabilizations of relative categories}\label{ssec:examplesofstab}
In each of the following examples, the monoidal category that acts is symmetric monoidal and actions are mute on the right. Thus,  $\Stab_{-}$ can be identified by $\Inv_{r}$ and $\ell\Stab_{-}$ can be identified by $\ell\Inv_{r}$. Unless otherwise states, we use the special case given in  \ref{ssec:specialcase}. Therefore, $\ii$ is a monoidal groupoid and $\RelCat$ denotes the $(2,1)$-category of relative categories. Thus, the weak end will coincide with the homotopy end. We use the constructions of $(2,1)$-limits as described in \cite{bergner2012homotopy}.

\subsubsection{Sequential Spectra}\label{sssec:sequentialspectra} Let $\ii=\bbN$ with the monoidal structure given by addition and with only identity  morphisms, and let $\cA=\cT$ be the category of pointed spaces  with the standard model structure. Let $\Omega:\HH\to \RelCat$ be the action given by usual loop space functors; that is, $\Omega(n,m)(X)=\Omega^nX=\Omega\cdots \Omega X$, the $n$-fold loop space of $X$. This action is in fact the action generated by the functor $\Omega:\cT\to\cT$. Then  $\Inv_{r}(\cT_\Omega)$ has objects as pairs $(E,\sigma)$ where $E:\bbN\to \cT:n\mapsto E_n$ is a functor and   $\sigma_m:E_n\to \Omega^mE_{n+m}$ is a weak equivalence natural in $n$ and $m$. The triangular diagram \ref{diag:triangleofstab} above already commutes in this case. Such objects are known as spectra in the literature. If we consider the lax stabilization $\ell\Inv_{r}(\cT_\Omega)$, then $\sigma_m$ is just a map. Objects of  $\ell\Inv_{r}(\cT_\Omega)$ are then called  prespectra. The model structure on  $\ell\Inv_{r}(\cT_\Omega)$  coincides with the level model structure and the spectrification functor in \ref{sssec:constructionofspectrification} is the usual spectrification of prespectra. 
\subsubsection{Spectra indexed by standard spheres}\label{ssec:generalspheres} 	
Note that our original motivation of relating spectra with actions of monoidal categories follows from the fact that loop space functors generate a monoidal category with natural transformations between them, which is $\cT$-enriched. Let $\overline{\ii}$ be this monoidal category. Since $\Omega^n\cong \cT(S^n,-)$, by Yoneda lemma we can identify $\overline{\ii}$ by the monoidal category with objects pointed spheres $S^n$ for each $n$ in $\bbN$ and with morphisms $\ii(S^n,S^m)\cong \cT(S^n,S^m),$ set of based maps between pointed spheres. This category is symmetric monoidal with the usual smash product. Let $\ii$ be the core of this monoidal category. Then   $\Inv_{r}(\cT_\Omega)$  is  the category whose objects as pairs $(E,\sigma)$ where $E:\ii\to G\cT:S^n\mapsto E_n$ is a functor and $\sigma_{n,m}:E_n\to \Omega^mE_{n+ m}$ is an weak equivalence natural in both $n$ and $m$ in the sense that for every map $f:S^n\to S^n$ the following diagrams commute  
 \[\begin{tikzpicture}[scale=.8]
	\node (0) at (1,0) {$E_n$};
	\node (c) at (6,0) {$\Omega^mE_{n+m}$};
	\node (a) at (1,2) {$E_n$};
	\node (b) at (6,2) {$\Omega^mE_{n+m}$};
	\path[->,font=\scriptsize,>=angle 90]
	(a) edge node[above]  {$\sigma_{n,m}$} (b)
	(b) edge node[right] {$\Omega^mE(f\wedge S^m)$} (c)
	(a) edge node[left] {$E_f$} (0)
	(0) edge node[above] {$\sigma_{n,m}$ } (c);
\end{tikzpicture} \quad \quad
\begin{tikzpicture}[scale=.8]
	\node (0) at (1,0) {$\Omega^nE_{t+n}$};
	\node (c) at (6,0) {$\Omega^nE_{t+n}$};
	\node (a) at (1,2) {$E_t$};
	\node (b) at (6,2) {$\Omega^kE_{t+n}$};
	\path[->,font=\scriptsize,>=angle 90]
	(a) edge node[above]  {$\sigma_{t,n}$} (b)
	(b) edge node[right] {$\Omega^fE(t+ n)$} (c)
	(a) edge node[left] {$\sigma_{t,n}$} (0)
	(0) edge node[above] {$\Omega^n(E(S^t\wedge f))$ } (c);
\end{tikzpicture}\] 

\subsubsection{Coordinate Free  spectra}\label{ssec:coordinatefreespectra} Let $\ii$ be the core of the monoidal category of real inner product spaces with direct sum and linear isometric embeddings. For an inner product space $V$, let $S^V$ denote its one point compactification. Define a $\ii$-action $\Omega$ on the relative category $\cT$ by  $\Omega(V,\unit^{\op})(X)=\Omega^VX=\cT(\bbS^V,X)$. Then  $\Inv_{r}(\cT_\Omega)$ is equivalent to the category with objects as pairs $(E,\sigma)$ where $E:\ii\to \cT:V\mapsto E_V$ is a functor and  $\sigma_{V,W}:E_V\to \Omega^WE_{V\oplus W}$ is a natural weak equivalence. Moreover, for any $Z\in \ii$ the diagram
	\[\begin{tikzpicture}[scale=1.55]
		\node (AB2) at (1,1) {$E_{V}$};		
		\node (BC2) at (4,1) {$\Omega^{W\oplus Z}E_{V\oplus W\oplus Z}$};
		\node (AB) at (1,0) {$\Omega^WE_{V\oplus W}$};		
		\node (BC) at (4,0) {$\Omega^{W} \Omega^{Z} E_{V\oplus W\oplus Z}$};

		\path[->,font=\scriptsize,>=angle 90]
		(AB) edge node[below]{$\Omega^W(\sigma_{V\oplus W,Z})$} (BC)
		(AB2) edge node[above]{$\sigma_{V,W\oplus Z}$} (BC2)
		(AB2) edge node[left]{$\sigma_{V,W}$} (AB)
		(BC2) edge node[right]{$\cong$} (BC);
	\end{tikzpicture}\]
commutes. Note that this diagram is  the same as the  diagram \ref{diag:triangleofstab}  after composing bottom arrow with  inverse of the right vertical isomorphism.
A refinement of the construction above gives the coordinate free spectra indexed by a universe. Let $U=\mathbb R^{\infty}$ be a  countably infinite dimensional real inner product space. Let $\ii$ be the category whose objects are finite dimensional subspaces of $U$ and morphisms are linear isometric isomorphisms.   Again, define $\Omega$ on $\cT$ by  $\Omega(V,1)(X)=\Omega^VX=\cT(\bbS^V,X)$. Then  the stabilization $\Inv_{r}(\cT_\Omega)$ is equivalent to the category of coordinate free spectra. In fact, if $V\subset W$ by writing $W - V$ for the orthogonal complement of $V$ in $W$, we obtain structure maps of the form   $\sigma_{V,W- V}:E_V\to \Omega^{W- V}E_{ W}$.  Similarly, the lax stabilization $\ell\Inv_{r}(\cT_\Omega)$ gives the coordinate free prespectra, which admits the level model structure in which weak equivalences and cofibrations defined levelwise.  Since $\ii$ is a monoidal groupoid, the construction in \ref{ssec:specialcase} gives the category of coordinate free $\Omega$-spectra.

	\subsubsection{Genuine $G$-Spectra}\label{ssect:genuinespectra}  	Now, let $G$ be a group and $\ii$ be the monoidal category of $G$-inner product spaces with direct sum and with $G$-equivariant linear isometric isomorphisms. Consider the category $G\cT$ of pointed $G$-spaces and pointed $G$-maps with the standard model structure.  Define an $\ii$-action $\Omega:\HH\to \RelCat$ on $G\cT$  by  $\Omega(V,\unit^{\op})(X)=\Omega^VX=\overline{G\cT}_*(\bbS^V,X)$, the set of pointed maps from the one point compactification of $V$ to $X$ considered with the conjugation action. Then  $\Inv_{r}(G\cT_\Omega)$ is  the category with objects as pairs $(E,\sigma)$ where $E:\ii\to G\cT:V\mapsto E_V$ is a functor and $\sigma_{V,W}:E_V\to \Omega^WE_{V\oplus W}$ is a weak equivalence that is natural in both $V$ and $W$. The diagram analogues to \ref{diag:triangleofstab} commutes. 
	
For convenience one considers the more classical  notion of equivariant spectra  indexed by a $G$-universe.  Let $\ii$ be as in \ref{sssec:ROGradedCohomologyTheories}.  Define an $\ii$-action $\Omega$  on $G\cT$ as above. Then $\Inv_{r}(G\cT_\Omega)$ is the category of genuine $G$-spectra and  $\ell\Inv_{r}(G\cT_\Omega)$ gives the category of  genuine orthogonal $G$-prespectra, see \cite[Ch. XII, 2]{may}. Here, we note that $\ii(V,V)=O(V)$, so that for each object $(E,\sigma)$ in $\ell\Inv_{r}(G\cT_\Omega)$, $E(V)$ admits a $O(V)$-action and $\sigma_{{V} ,{W}}$ is $O(V)\times O(W)$-equivariant analogous to  the coordinate free case above.
	
	\subsubsection{Parameterized Spectra}\label{ssect:paraspectra} Let $B$ be a space and $\cT_B$ be the model category given in \cite[Ch. 6]{mayparametrized}. Let $\ii$ be the monoidal category as given in \ref{sssec:bundlegradedparameterizedtheories}.  
	  Consider the model structure on $\cT_B$  \cite[Ch. 6]{mayparametrized}. For $\xi $ in $\ii$, denote by $\bbS^\xi$ the associated fiber-wise one point compactification, which is a sphere bundle over $B$ with point as the section induced by the zero section of $\xi$. Define the $\ii$-action $\Omega_B:\HH\to \RelCat$ on  $\cT_B$ as follows: Given $\xi,\eta$ in $\ii$, define $\Omega_B$ by \[\Omega_B(\xi,\eta^{op})(t:X\to B)=\cT_B(\bbS^{\xi},  t).\] Denote by $\Omega_B^\xi$ the functor $\cT_B(\bbS^{\xi},-)$. Then  $\Inv_{r}((\cT_B)_{\Omega_B})$ has objects as pairs $(\digamma,\sigma)$ where $\digamma:\ii\to \cT_B:\xi\mapsto \digamma_\xi$ is a functor and $\sigma_{\xi,\eta}:\digamma_\xi\to \Omega_B^\eta \digamma_{\xi\oplus_B \eta}$.  The diagram analogues to \ref{diag:triangleofstab} commutes. 	Such objects are called as genuine spectra parameterized by $B$.  The lax stabilization  $\ell\Inv_{r}((\cT_B)_{\Omega_B})$ then  gives the category of genuine prespectra parameterized by $B$. The category $\ell\Inv_{r}((\cT_B)_{\Omega_B})$ admits the level model structure as in the previous examples.

	\subsubsection{Parameterized $G$-Spectra} Parameterized $G$-spectra and prespectra can be obtained just like the transition from coordinate free spectra to genuine $G$-spectra.

\subsubsection{Diagram Spectra}\label{ssec:diagramspectra} Let $\VV$ be a closed symmetric monoidal model category with internal hom $\hom_{\VV}$ and $\frM=\VV\Cat$, the $\VV\Cat$-category of $\VV$-cat enriched categories. Let $\ii$ be a $\VV$-enriched small symmetric monoidal category. Assume both $\VV$ and $\ii$ are pointed as categories. Let $[\ii,\VV]$ be the $\VV$-category of $\VV$-enriched functors from $\ii$ to $\VV$. The category  $[\ii,\VV]$ admits a symmetric monoidal product with Day convolution.  Let $R$ be any monoid in the symmetric monoidal category $[\ii,\VV]$. Define an $\ii$ action on $\Omega:\HH\to \VCat$ on $\VV$ as $$\Omega(u,v^{\op})(X)=\Omega^uX=\hom_{\VV}(R(u),X).$$ Since $R$ is a monoid, the action is well-defined. The lax stabilization with respect to $\Omega$, $\ell\Inv_{r}(\VV_\Omega)$, has objects as pairs $(E,\varsigma)$ where $E:\ii\to \VV:u\to E_u$ and $\varsigma_v:E_v\to \Omega^u E_{v{\circledast} u}$ is a natural transformation. If $\VV$ is a relative category then so does  $\ell\Inv_{r}(\VV_\Omega)$ with the levelwise weak equivalences.  If $\Omega$ factors through the category of adjunctions; i.e., there exist an adjoint action $\Sigma$ on $\VV$,  then a $\ii$-spectra as given in \cite{MMSS} can be seen as an object in the lax stabilization provided that $\VV=\cT$ and $\Omega$ is the ordinary loop space action. In this case, $\ell\Inv_{r}(\VV_\Omega)$ admits a model structure, see  \cite{MMSS}. 

In particular, we can obtain symmetric spectra, orthogonal spectra and their equivariant versions by choosing $\ii$ accordingly. If we choose $\ii$ as the permutation category;  whose objects are finite sets $[n]=\{1,\dots,n\}$ and whose morphisms are permutations (i.e., $\ii$ is the skeleton of the groupoid of finite sets and permutations), and cardinal sum as the monoidal product and $[0]=\emptyset$ as the monoidal unit, and if we choose the usual loop space action on $\VV=\Top$, then the resulting stabilization gives the category of symmetric spectra \cite{shipleysymmetric}, \cite{schwedespectra}. If we choose $\ii$ to be the topological groupoid of Euclidean spaces and linear isomorphisms, i.e., objects of $\ii$ are $\bbR^n$ for all $n\in \bbN$ and morphisms are linear isometric isomorphisms, than the resulting stabilization gives the orthogonal spectra \cite{mandellmay}. Both  examples are given in \cite{MMSS} as well. One can also take the action of the topological monoidal category $\ii$ in Section \ref{ssec:generalspheres} of all pointed spheres and pointed homeomorphisms, and obtain another model for the category of spectra.
	\subsection{Costabilization of relative categories with respect to $\ii$-actions}
In each of the following examples we assume the acting monoidal category is symmetric monoidal and actions are mute on the left, so that  $\coStab_{-}$ can be identified by $\coInv_{l}$ and $\ell\coStab_{-}$ can be identified by $\ell\coInv_{l}$. Again, we use the dual version of the special case given in  \ref{ssec:specialcase}; hence, $\ii$ is a monoidal groupoid and $\RelCat$ is the $(2,1)$-category of relative categories as described there. Thus, the weak coend will coincide with the homotopy coend and we can use the constructions  described in \cite{bergner2014hocolim} and \cite{meier2014homotopy}.

	Let $\alpha:\HH\to \RelCat$ be an $\ii$-action on $\cA=(A,W_A)$ that is mute on the {left}.   The  costabilization of $\cA$ with respect to $\alpha$, denoted by $\coStab_{\ii}(\cA)$, can be defined as the weak coend of $\alpha$, which can be seen as a homotopy colimit over $\ii$. In the case when $\ii$ is symmetric monoidal, this homotopy colimit can be given in terms of Gröthendieck-type construction. Since $\alpha$ is mute on the left, we have $\coStab_{\ii}(\cA)=\int_{\ii}(\ddot{\mu}_{r}\odot \alpha)\circ \iota_r$. Then objects of $\coStab_{\ii}(\cA)$ are pairs $({u},a)$ with $({u}\in \ii$ and $a\in \cM$, and a morphism between $({u},a)\to ({u}',a')$ is a triple $({v},\varphi,f)$ where ${v} \in \ii$ and $(\varphi,f):({u}{\circledast} {v}, \alpha({\unit},{v}^{\op})(a))\to ({u}',a')$. 	A triple $({v},\varphi,f)$ is a weak equivalence if $(\varphi,f):({u}{\circledast} {v}, \alpha({\unit},{v}^{\op})(a))\to ({u}',a')$ is a weak equivalence; that is, ${u}{\circledast} {v}\cong {u}'$ in $\ii$ and $f:\alpha({\unit},{v}^{\op})(a)\to a'$ is a weak equivalence in $\cM$. In particular, for every ${v}$ in $\ii$, $({u},a)$ is weakly equivalent to $({u}{\circledast} {v},\alpha({\unit},{v}^{\op})(a))$.
	
In sufficiently nice cases, if $\cA$ is a model category $\coStab_{\ii}(\cA)$ admits a model structure; e.g., when $\ii$ is bicomplete (so a model category with trivial model structure), and $\alpha$ is relative and proper functor, see \cite{harpaz2015grothendieck}. Observe that as it coincides with the homotopy colimit; and thus, this construction is homotopically correct with respect to the Barwick-Kan model structure (see \cite{bergner2014hocolim}, \cite{harpaz2015grothendieck} and \cite{meier2014homotopy}).

	\subsection{Some examples of costabilizations  of relative categories}\label{ssec:laxcostabofrelativecats}
	
	\subsubsection{Spanier Whitehead Category} Let $\ii=\bbN$ with only identity maps as morphisms and $\cA=\cT$ pointed topological spaces with with standard model structure and $\Sigma:\HH\to \RelCat$ be the action given by usual suspensions; that is, $\Sigma(n,m)(X)=\Sigma^nX$ the $n$-fold suspension of $X$. Then  $\coStab_{\ii}(\cT)$ has objects as pairs $(n,X)$ and for $n\leq m$ a morphism from $({n},X)$ to $ (m,Y)$ is a map $f:\Sigma^{m-n} X\to Y$. 	Its homotopy category with respect to the weak equivalences described above gives the usual Spanier-Whitehead category (see also \cite{meier2014homotopy}).

	\subsubsection{Coordinate Free Spanier-Whitehead Category} Let $\ii$ be as in the case of the coordinate free spectra  and $\Sigma:\HH\to \RelCat$ be the action given by  $\Sigma(V,W)(X)=\Sigma^WX:= S^W\wedge X $ where $\wedge $ denotes the smash product of spaces. Then  $\coStab_{\ii}(\cT)$ has objects as pairs $(W,X)$ and for $\dim(W)\leq  \dim(Z)$ a morphism from $({W},X)$ to $ (Z,Y)$ is a triple $(V,\phi,f)$ where $\phi:V\oplus W\to Z $ is an  isometric isomorphism and  $f:\Sigma^{W} X\to Y$ is a map. Such a triple $(V,\phi,f)$ is a weak equivalence if $f$ is a weak equivalence. Then, following the homotopy category $\coStab_{\ii}(\cT)$ with these weak equivalences gives the coordinate free version of the Spanier-Whitehead category.

	\subsubsection{$G$-Equivariant Spanier-Whitehead Category} This is the $G$-equivariant version of coordinate free Spanier-Whitehead Category where $\ii$ is chosen as in \ref{ssect:genuinespectra} and $\Sigma:\HH\to \RelCat$ be the action on $G\cT$ given by  $\Sigma(V,W)(X)=\Sigma^WX:=\cT(S^W,X)$ where $X$ is a $G$ space and $S^W$ is the one point compactification of $W$ with the induced $G$-action. Then   $\coStab_{\ii}(\cT)$ has objects as pairs $(W,X)$ and for $\dim(W)\leq  \dim(Z)$ a morphism from $({W},X)$ to $ (Z,Y)$ is a triple $(V,\phi,f)$ where $\phi:V\oplus W\to Z $ is an equivalence class of a isometric $G$-isomorphisms and  $f:\Sigma^{W} X\to Y$ is a $G$-map. 
	\subsubsection{Parameterized Spanier-Whitehead Category} This is the parameterized  version of coordinate free Spanier-Whitehead category where $\ii$ is chosen as in \ref{ssect:paraspectra}. Let $\Sigma_B:\HH\to \RelCat$ be the $\ii$-action on $\cT_B$ given by  $\Sigma_B(\zeta,\xi)(t)=\Sigma^\xi t:=S^\xi\wedge_B t$ where $t:X\to B$ be a pointed map over $B$ (i.e., a pointed object in the over category $\cT/B$) and $S^\xi$ is the sphere bundle over $B$ given by the fiber-wise one point compactification of $\xi$, and $\wedge_B$ is the fiber-wise smash product. 	In this case objects are pairs $(\xi,t)$ where $\xi$ is an object in $\ii$ and $t:X\to B$ a pointed map. A morphism from $(\xi,t)$ to $(\zeta,z)$ is a triple $(\chi,m,f)$ where $\chi$ is an object in $\ii$ and $m:\xi\oplus \chi \to \zeta $ a morphism in $\ii$ and $f:S^\chi\wedge_B t \to z$  is a pointed map over $B$.
	\subsubsection{Equivariant Parameterized Spanier-Whitehead Category} The description of Equivariant Parameterized Spanier-Whitehead Category just the equivariant generalization of the Parameterized Spanier-Whitehead Category above, which is the costabilization of $G\cT_B=\id_B/(G\cT/B)$ with respect to the action given by the fibrewise suspensions as above, but equipped with $G$-actions.

	\subsubsection{Diagram Spanier-Whitehead Category}
Let $\VV$, $\ii$ and $R$ be as in \ref{ssec:diagramspectra}. Define a $\ii$ action on $\Sigma$ on $\VV$ as $$\Sigma(u,v^{\op})(X):=\Sigma^v X:= X \wedge_{\VV}R(u).$$
 Then   $\coStab_{\ii}(\VV)$ has objects as pairs $(v,X)$ where $v$ in $ \ii$ and $X$ in $\VV$. A morphism from $(w,X)$ to $ (v,Y)$ is a triple $(u,\phi,f)$ where $\phi:u\wedge_\ii w\to v$ is a morphisms in $\ii$ and  $f:X \wedge_{\VV}R(u)\to Y$ is a morphism in $\VV$. Weak equivalences are the triples in which $\phi$ is an isomorphism and $f$ is a weak equivalence.
 
\begin{remark}
	Observe that the duality between   stabilization and   costabilization gives that the constructions of stable homotopy category as the homotopy category of versions of spectra and as versions of Spanier-Whitehead category are dual. 
\end{remark}

 	\section{Some concluding remarks}\label{sec:concluding}
We conclude our paper with some remarks regarding stabilizations in the $2$-category of (symmetric) monoidal relative categories. These aspects are stated as future directions and considerations of the main theme of this paper. Here we give a brief outline about how the constructions can be applied to monoidal stabilizations. 
 	\subsection{On stabilizations of (symmetric) monoidal relative categories}\label{ssec:monoidalstructuresrelcat}
 Let $\MonCat$ and $\Sym\MonCat$ denote the categories of monoidal and symmetric monoidal categories. It is known that both  $\MonCat$ and $\Sym\MonCat$ are categories of models on $2$-Lawvere theories (also known as doctrines). We refer to  \cite{yanofsky2000lawvere2theory},  \cite{yanofskyhomotopy} and \cite{shulman2019practical} for more details about $2$-theories, and their examples. We can use an internal approach to define (symmetric) monoidal relative categories. We denote by $\TT_{\Mon}$ and $\TT_{\Sym}$ the $2$-Lawvere theories (i.e., syntactic categories) for $\MonCat$ and $\Sym\MonCat$, respectively. The theories $\TT_{\Mon}$ (resp. $\TT_{\Sym}$) can be taken as the opposite categories of free monoidal (resp. symmetric monoidal) categories on $n$-generators, see e.g. \cite[Ex. 2.3]{yanofsky2000lawvere2theory}.

  Then we can define a monoidal relative category as a product preserving $2$-functor $A:\TT_{\Mon}\to \RelCat$ and denote the $2$-category of monoidal relative categories by $\Mon\RelCat$. Similarly, a symmetric monoidal relative category as a product preserving $2$-functor $A:\TT_{\Sym}\to \RelCat$ and denote the $2$-category of monoidal relative categories by $\Sym\RelCat$.  	Recall that we assume every relative category is semi-saturated. Therefore, the homotopy category functor $\ho:\RelCat\to \Cat$ preserves finite products, see   \cite{barwick2012relative}. Thus, the homotopy category of a (symmetric) monoidal relative category defines a (symmetric) monoidal category. If we consider in the strict sense, being models over a $2$-Lawvere theories both  $\Mon\RelCat$ and $\Sym\RelCat$ have limits, which are created in $\RelCat$. Thus, if $\alpha$ is an $\ii$-action on a (symmetric) monoidal relative category $\cA$, then $\Stab_{\ii}(\cA_\alpha)$ is a (symmetric) monoidal category that is stable with respect to $\ii$-action. However, it is often hard to find examples in the strict sense. On the other hand, if we consider these categories with strong monoidal functors then examples of such actions are plentiful (e.g., classical loop space or suspension actions). In this case, these categories have weak limits, however, does not have to be created in $\RelCat$. Thus, for any $\ii$-action $\alpha$ on a (symmetric) monoidal category $\cA$ where $\ii$ acts by strong monoidal functors, the stabilization $\Stab_{\ii}(\cA_\alpha)$ is a  (symmetric) monoidal category that is stable with respect to the action. Since stabilizations is defined up to equivalence, there is a stabilization in $\RelCat$ after forgetting the monoidal structure which is equivalent to the underlying relative category.
 
 In the case $\ii$ is a symmetric monoidal groupoid (i.e., $B\ii$ is a $(2,1)$-category), we can define homotopy stabilizations as $(2,1)$-limits (i.e., homotopy limits \cite[Sec. 4.2.4]{lurie}) in the $(2,1)$-category of obtained from the homotopy category of $\Sym\RelCat$, where the homotopical structure on $\Sym\RelCat$ is induced from the Barwick-Kan model structure on $\RelCat$ via the forgetful functor $\Sym\RelCat\to\RelCat$. We will refer to the stabilization here as homotopy stabilization, as it is defined via a homotopy limit. Note that it satisfies the universal property up to homotopy in $\Sym\RelCat$.

 More generally, one considers stabilizations in arbitrary  $2$-categorical algebraic theories and considers stabilizations with extra algebraic structure.

\subsection{On (co)stabilizations of $(\infty,1)$-categories with respect to actions}
By using the model independent considerations of  $(\infty,1)$-categories of \cite{riehl2018elements} and setting $\frM$ to be the  homotopy $2$-category  of $(\infty,1)$-categories, $(\infty,1)$-functors and $(\infty,1)$-natural transformations for a suitable $\infty$-cosmos so that all weak $2$-limits and $2$-colimits exists in $\frM$, along with weak $\Cat$-powers and weak $\Cat$-copowers. Let $\alpha$ be an $\ii$-action on an $(\infty,1)$-category $\cA$. Then, $\Stab_{\ii }(\cA_{{\alpha}})$ defines the stabilization and $\coStab_{\ii }(\cA_{{\alpha}})$ defines the costabilization of $\cA_\alpha$. Note that $\Stab_{\bbN}(\cA_{{\Omega}})$ is equivalent to the usual definition of stabilization of $(\infty,1)$-categories as in \cite[Sec. 7]{lurie2012higher}. 

We can also obtain (symmetric) monoidal stabilizations by replacing the $2$-Lawvere theories mentioned in Section \ref{ssec:monoidalstructuresrelcat} by their $\infty$-category versions, and repeating the procedure mentioned there and stabilize  (symmetric) monoidal $\infty$-categories with respect to given actions as a weak limit (i.e., a weak end) within the category of  (symmetric) monoidal $\infty$-categories, and obtain (symmetric) monoidal stable $\infty$-categories that are stable with respect to an action.
\bibliographystyle{abbrv}
\bibliography{research}

\begin{thebibliography}{10}

\bibitem{barwick2012relative}
C.~Barwick and D.~M. Kan.
\newblock Relative categories: another model for the homotopy theory of
  homotopy theories.
\newblock {\em Indag. Math. (N.S.)}, 23(1-2):42--68, 2012.

\bibitem{bergner2012homotopy}
J.~E. Bergner.
\newblock Homotopy limits of model categories and more general homotopy
  theories.
\newblock {\em Bull. Lond. Math. Soc.}, 44(2):311--322, 2012.

\bibitem{bergner2014hocolim}
J.~E. Bergner.
\newblock Homotopy colimits of model categories.
\newblock {\em An Alpine Expedition through Algebraic Topology}, 617:31, 2014.

\bibitem{birdflexiblelimits}
G.~J. Bird, G.~M. Kelly, A.~J. Power, and R.~H. Street.
\newblock Flexible limits for {$2$}-categories.
\newblock {\em J. Pure Appl. Algebra}, 61(1):1--27, 1989.

\bibitem{bousfieldfriedlander}
A.~K. Bousfield and E.~M. Friedlander.
\newblock Homotopy theory of {$\Gamma $}-spaces, spectra, and bisimplicial
  sets.
\newblock In {\em Geometric applications of homotopy theory ({P}roc. {C}onf.,
  {E}vanston, {I}ll., 1977), {II}}, volume 658 of {\em Lecture Notes in Math.},
  pages 80--130. Springer, Berlin, 1978.

\bibitem{campbell2019}
A.~Campbell.
\newblock How strict is strictification?
\newblock {\em J. Pure Appl. Algebra}, 223(7):2948--2976, 2019.

\bibitem{coley2019derivators}
I.~Coley.
\newblock Stabilization of derivators revisited.
\newblock {\em J. Homotopy Relat. Struct.}, 14(2):525--577, 2019.

\bibitem{dhks}
W.~G. Dwyer, P.~S. Hirschhorn, D.~M. Kan, and J.~H. Smith.
\newblock {\em Homotopy limit functors on model categories and homotopical
  categories}, volume 113 of {\em Mathematical Surveys and Monographs}.
\newblock American Mathematical Society, Providence, RI, 2004.

\bibitem{elmendorfkriz}
A.~D. Elmendorf, I.~K\v{r}\'{\i}\v{z}, M.~A. Mandell, and J.~P. May.
\newblock Modern foundations for stable homotopy theory.
\newblock In {\em Handbook of algebraic topology}, pages 213--253.
  North-Holland, Amsterdam, 1995.

\bibitem{erdal}
M.~A. Erdal.
\newblock {\em Monoid actions, their categorification and applications}.
\newblock PhD thesis, Bilkent University, 2016.

\bibitem{erdalunlu}
M.~A. Erdal and O.~\"{U}nl\"{u}.
\newblock Semigroup actions on sets and the {B}urnside ring.
\newblock {\em Appl. Categ. Structures}, 26(1):7--28, 2018.

\bibitem{goodwillie3}
T.~G. Goodwillie.
\newblock Calculus. {III}. {T}aylor series.
\newblock {\em Geom. Topol.}, 7:645--711, 2003.

\bibitem{groth2013derivators}
M.~Groth.
\newblock Derivators, pointed derivators and stable derivators.
\newblock {\em Algebr. Geom. Topol.}, 13(1):313--374, 2013.

\bibitem{harpaz2015grothendieck}
Y.~Harpaz and M.~Prasma.
\newblock The {G}rothendieck construction for model categories.
\newblock {\em Adv. Math.}, 281:1306--1363, 2015.

\bibitem{heller}
A.~Heller.
\newblock Stable homotopy theories and stabilization.
\newblock {\em J. Pure Appl. Algebra}, 115(2):113--130, 1997.

\bibitem{shipleysymmetric}
M.~Hovey, B.~Shipley, and J.~Smith.
\newblock Symmetric spectra.
\newblock {\em J. Amer. Math. Soc.}, 13(1):149--208, 2000.

\bibitem{kelly2001}
G.~Janelidze and G.~M. Kelly.
\newblock A note on actions of a monoidal category.
\newblock {\em Theory Appl. Categ.}, 9:61--91, 2001/02.
\newblock CT2000 Conference (Como).

\bibitem{johnson20212}
N.~Johnson and D.~Yau.
\newblock {\em 2-dimensional categories}.
\newblock Oxford University Press, USA, 2021.

\bibitem{lewis1986equivariant}
L.~G. Lewis, Jr., J.~P. May, M.~Steinberger, and J.~E. McClure.
\newblock {\em Equivariant stable homotopy theory}, volume 1213 of {\em Lecture
  Notes in Mathematics}.
\newblock Springer-Verlag, Berlin, 1986.
\newblock With contributions by J. E. McClure.

\bibitem{lurie}
J.~Lurie.
\newblock {\em Higher topos theory}, volume 170 of {\em Annals of Mathematics
  Studies}.
\newblock Princeton University Press, Princeton, NJ, 2009.

\bibitem{lurie2012higher}
J.~Lurie.
\newblock Higher algebra, 2012.

\bibitem{mandellmay}
M.~A. Mandell and J.~P. May.
\newblock Equivariant orthogonal spectra and {$S$}-modules.
\newblock {\em Mem. Amer. Math. Soc.}, 159(755):x+108, 2002.

\bibitem{MMSS}
M.~A. Mandell, J.~P. May, S.~Schwede, and B.~Shipley.
\newblock Model categories of diagram spectra.
\newblock {\em Proc. London Math. Soc. (3)}, 82(2):441--512, 2001.

\bibitem{may}
J.~P. May.
\newblock {\em Equivariant homotopy and cohomology theory}, volume~91 of {\em
  CBMS Regional Conference Series in Mathematics}.
\newblock Published for the Conference Board of the Mathematical Sciences,
  Washington, DC; by the American Mathematical Society, Providence, RI, 1996.
\newblock With contributions by M. Cole, G. Comeza{\~n}a, S. Costenoble, A. D.
  Elmendorf, J. P. C. Greenlees, L. G. Lewis, Jr., R. J. Piacenza, G.
  Triantafillou, and S. Waner.

\bibitem{mayparametrized}
J.~P. May and J.~Sigurdsson.
\newblock {\em Parametrized homotopy theory}, volume 132 of {\em Mathematical
  Surveys and Monographs}.
\newblock American Mathematical Society, Providence, RI, 2006.

\bibitem{meier2014homotopy}
L.~Meier.
\newblock Homotopy colimits of relative categories (preliminary version).
\newblock {\em Preprint}, 2014.

\bibitem{james1995fiberwise}
I.~M.~J. Michael Charles~Crabb.
\newblock Introduction to fibrewise homotopy theory.
\newblock In {\em Handbook of algebraic topology}, pages 169--194.
  North-Holland, Amsterdam, 1995.

\bibitem{pereira2013general}
L.~Pereira.
\newblock A general context for {G}oodwillie {C}alculus.
\newblock {\em arXiv preprint arXiv:1301.2832}, 2013.

\bibitem{prieto1987ko}
C.~Prieto.
\newblock {$K{\rm O}(B)$}-graded stable cohomotopy over {$B$} and {$R{\rm
  O}(G)$}-graded {$G$}-equivariant stable cohomotopy: a fixed point theoretical
  approach to the {S}egal conjecture.
\newblock In {\em The {L}efschetz centennial conference, {P}art {II} ({M}exico
  {C}ity, 1984)}, volume~58 of {\em Contemp. Math.}, pages 89--108. Amer. Math.
  Soc., Providence, RI, 1987.

\bibitem{riehl2018elements}
E.~Riehl and D.~Verity.
\newblock Elements of $\infty$-category theory.
\newblock {\em Preprint available at www. math. jhu. edu/\~{} eriehl/elements.
  pdf}, 2018.

\bibitem{schwedespectra}
S.~Schwede.
\newblock {$S$}-modules and symmetric spectra.
\newblock {\em Math. Ann.}, 319(3):517--532, 2001.

\bibitem{schwedeglobal}
S.~Schwede.
\newblock {\em Global homotopy theory}, volume~34 of {\em New Mathematical
  Monographs}.
\newblock Cambridge University Press, Cambridge, 2018.

\bibitem{shulman2019practical}
M.~Shulman.
\newblock A practical type theory for symmetric monoidal categories.
\newblock {\em arXiv preprint arXiv:1911.00818}, 2019.

\bibitem{yanofsky2000lawvere2theory}
N.~S. Yanofsky.
\newblock The syntax of coherence.
\newblock {\em Cahiers Topologie G\'{e}om. Diff\'{e}rentielle Cat\'{e}g.},
  41(4):255--304, 2000.

\bibitem{yanofskyhomotopy}
N.~S. Yanofsky.
\newblock Coherence, homotopy and 2-theories.
\newblock {\em $K$-Theory}, 23(3):203--235, 2001.

\end{thebibliography}

\end{document}